\theoremstyle{plain}
\newtheorem{theorem}{Theorem}[section]
\newtheorem{lemma}[theorem]{Lemma}
\newtheorem{proposition}[theorem]{Proposition}
\theoremstyle{definition}
\newtheorem{definition}{Definition}[section]
\theoremstyle{remark}
\newtheorem{remark}{Remark}[section]
\newtheorem{example}{Example}[section]
\newcommand{\acc}{\text{acc}}
\newcommand{\R}{{\mathbb{R}}}
\newcommand{\N}{{\mathbb{N}}}
\newcommand{\eps}{{\varepsilon}}
\title{Stability for the Training of Deep Neural Networks and Other Classifiers}
\author{Leonid Berlyand, Pierre-Emmanuel Jabin, C. Alex Safsten}
\begin{document}
	
	\maketitle
	
	\begin{abstract}
	  We examine the stability of loss-minimizing training processes that are used for deep neural networks (DNN) and other classifiers. While a classifier is optimized during training through a so-called \emph{loss function}, the performance of classifiers is usually evaluated by some measure of accuracy, such as the overall accuracy which quantifies the proportion of objects that are well classified. This leads to the guiding question of stability: does decreasing loss through training always result in increased accuracy? We formalize the notion of stability, and provide examples of instability. Our main result consists of two novel conditions on the classifier which, if either is satisfied, ensure stability of training, that is we derive tight bounds on accuracy as loss decreases. We also derive a sufficient condition for stability on the training set alone, identifying flat portions of the data manifold as potential sources of instability. The latter condition is explicitly verifiable on the training dataset. Our results do not depend on the algorithm used for training, as long as loss decreases with training.
	\end{abstract}
	
	\section{Introduction}\label{sec:introduction}
	
	Our purpose in the present article is to provide rigorous justifications to the basic training method commonly used in learning algorithms. We particularly focus on the stability of training in classification problems. Indeed, if training is unstable, it is difficult to decide when to stop. To better explain our aim, it is useful to introduce some basic notations.
	
	We are given a set of objects $S\subset\mathbb R^n$ whose elements are classified in a certain number $K$ of classes. Then we introduce a function called the \emph{exact classifier} that maps each $s\in S$ to the index $i(s)$ of its class. However, the exact classifier is typically only known on a \emph{finite} subset $T$ of $S$ called the \emph{training set}.
		

	{In practice, objects in $S$ are classified by an \emph{approximate classifier}, and the mathematical problem is to identify an optimal approximate classifier among a large set of potential candidates.}
	{The optimal approximate classifier should agree (or, at least nearly agree) with the exact classifier on $T$}. We consider here {a type of approximate classifier called} \emph{soft approximate classifiers} which may again be described in a general setting as functions
	\begin{equation}\label{eq:soft_classifier}
	\phi: s\in T\longrightarrow \phi(s)=(p_1(s),\ldots,p_K(s))\in [0,\ 1]^K,\ \mbox{with}\ p_1(s)+\ldots+p_K(s)=1.
	\end{equation}
	Such a classifier is often  interpreted as giving the probabilities that $s$ belongs to a given class: $p_i(s)$ can be described as the predicted probability that $s$ is in class $\#i$.
	In {this} framework a perfect classifier on $T$ is a function $\phi$ s.t. $p_i(s)=1$ if and only if $i=i(s)$ (and hence $p_i(s)=0$ if $i\neq i(s)$). 
	
	In practice of course, one cannot optimize among all possible functions $\phi$ and instead some sort of discretization is performed which generates {\em a parametrized family $\phi(s,\alpha)$} where the parameters $\alpha$ can be chosen in some other large dimensional space, $\alpha\in \R^\mu$ with $\mu\gg 1$ for instance. {Both $n$ and $\mu$ are typically very large numbers, and the relation between them plays a crucial role in the use of classifiers for practical problems.}
	
	Deep neural networks (DNNs) are of course one of the most popular examples of method to construct such parametrize family $\phi(s,\alpha)$. If $\phi$ is a DNN, it is a composition of several functions called \emph{layers}. Each layer is itself a composition of one linear and one nonlinear operation. In this setting, the parameters $\alpha$ are entries of matrices used for the linear operations on each layer. Figure \ref{fig:dnn} shows a diagram of a DNN.
	\begin{figure}[h]
		\centering
		\includegraphics[width=5in]{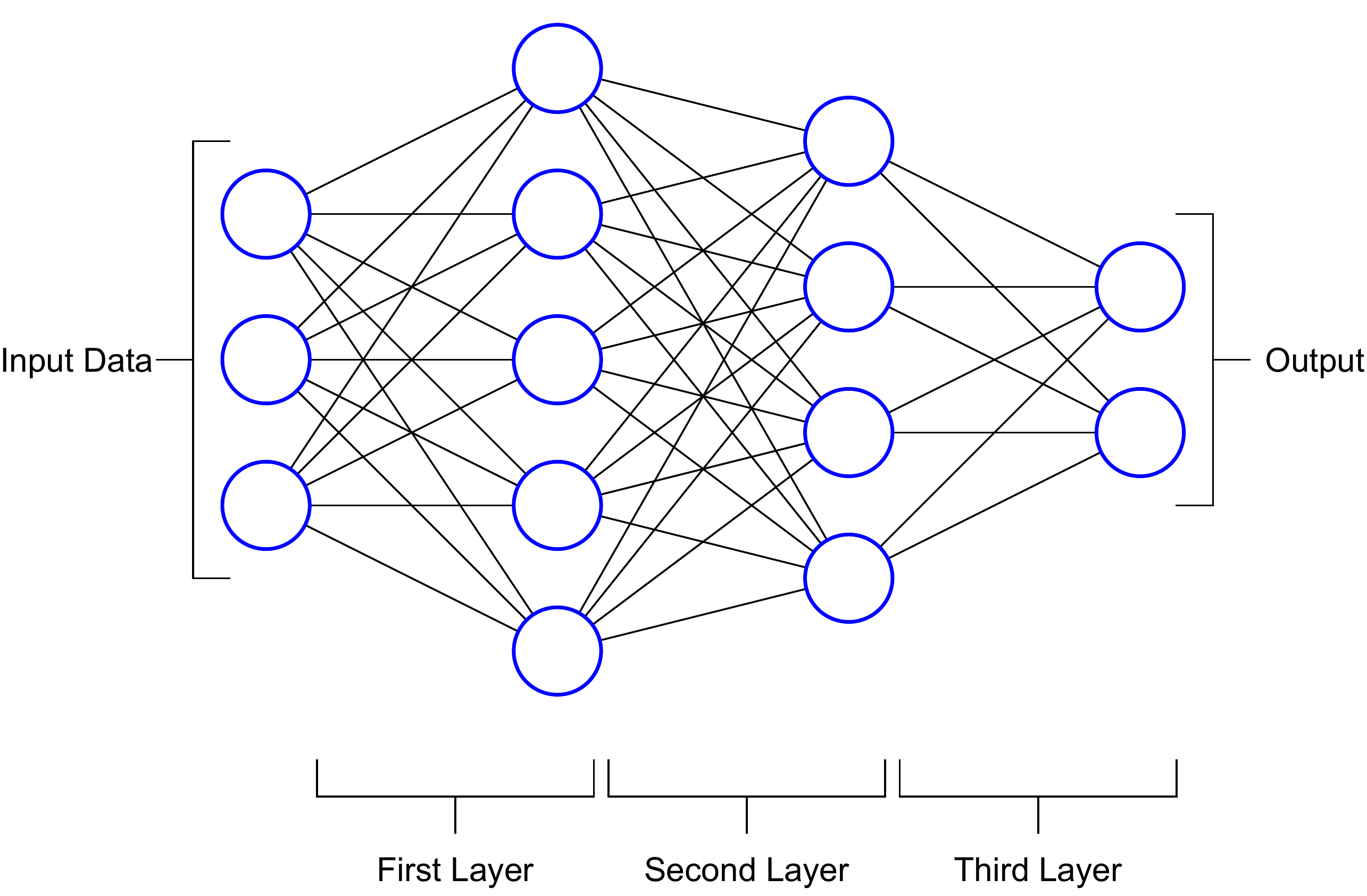}
		\caption{A diagram of a DNN.}
		\label{fig:dnn}
	\end{figure}
	
	Such deep neural networks have demonstrated their effectiveness on a variety of clustering and classifying problems, such as the {well-known} \cite{Lecun2} for handwriting recognition. To just give a few examples, one can refer to \cite{KriSutHin} for image classification, \cite{Hintonal} on speech recognition, \cite{LeungLeeFrey} for an example of applications to the {life sciences}, \cite{SutVinLe} on natural language understanding, or to \cite{LecunBengioHinton} for a general presentation of deep neural networks.
		
	\smallskip
	
	The training process consists of optimizing in $\alpha$ and  the family $\phi(s,\alpha)$ to obtain  the ``best'' choice. This naturally leads to the question of what is meant by best, which is at the heart of our investigations in this {article}. {Here, best means the highest performing classifier.} There are indeed several ways to measure the performance of classifiers: one may first consider the overall {\emph{accuracy}} which corresponds to the proportion of objects that are ``well-classified.''  {We say that $s$ is \emph{well-classified} by $\phi(\cdot,\alpha)$ if $p_{i(s)}(s,\alpha)>\max_{1\leq i\leq K,\,i\neq i(s)}p_i(s,\alpha)$,}
	that is $\phi(s,\alpha)$ gives the highest probability to the class to which $s$ {actually} belongs. Because this set will come up often in our analysis, we introduce a specific notation for this ``good'' set
	\begin{equation}\label{eq:simple_good_set_definition}
	G(\alpha)=\{s\in T\,:\ p_{i(s)}(s,\alpha)>\max_{1\leq i\leq K,\,i\neq i(s)} p_i(s,\alpha)\}, 
	\end{equation}
	which leads to the classical definition of overall accuracy
	\begin{equation}\label{eq:simple_accuracy}
	acc(\alpha)=\frac{\# G(\alpha)}{\# T}.
	\end{equation}
	For simplicity in this introduction, we assign the same weight to each object in the training set $T$. {In practice, objects in $T$ are often assigned different weights, which will be introduced below.}
        
	Other measures of performance exist and are commonly used: average accuracy where the average is taken within each class and which gives more weights to small classes, Cohen $\kappa$'s coefficient \cite{Coh1960}, etc....
	
	However, in practice training algorithms do not optimize accuracy (whether overall accuracy or some other definition) but instead try to minimize some {\em loss function}. There are compelling reasons for not using the accuracy: for example, {accuracy distinguishes between well-classified and not well-classified in a binary manner. That is, accuracy
	does not account for an object close to being well classified. Moreover, accuracy is a piecewise-constant function (taking values $0,1/\# T,2/\# T,\cdots,1$) so that its gradient is zero. For these reasons}, it cannot be maximized with gradient descent methods. Also approximating the piecewise constant accuracy function by a smooth function is computationally intensive, particularly due the high dimensionality $\mu$ of its domain.
	
	A typical example of a loss function is the so-called cross-entropy loss:
	\begin{equation}\label{eq:introducing_loss}
	\bar L(\alpha)=-\frac{1}{\#T}\,\sum_{s\in T}\log\left(p_{i(s)}(s,\alpha)\right),
	\end{equation}
	which is simply the average over the training set of the functions $-\log p_{i(s)}(s,\alpha)$. Obviously each of these functions is minimized if $p_{i(s)}(s,\alpha)=1$, {\em i.e.} if the classifier worked perfectly on the object $s$.
	
	Minimizing {\eqref{eq:introducing_loss}} simply leads to {finding} the parameters $\alpha$ such {that} on average $p_{i(s)}$ is as large (as close to $1$) as possible. Since $\bar L(\alpha)$ is smooth in $\alpha$ (at least if $\phi$ is), one may apply classical gradient algorithms, with stochastic gradient descent (SGD) being among the most popular {due to} the linear structure of $\bar L$ and the large size of many training sets. {Indeed, since \eqref{eq:introducing_loss} consists of many terms, computing the gradient of $\bar L(\alpha)$ is expensive. SGD simplifies this task by randomly selecting a batch of a few terms at each iteration of the descent algorithm, and computing the gradient of only those terms. This optimization process is referred to as \emph{training}. Though in general loss decreases through training, it need not be monotone because of e.g. effects due to stochasticity. In this work, we assume for simplicity that loss decreases monotonically, which is  approximately true in most practical problems.
	
	\smallskip
	
	The main question that we aim to answer in this article is {\em why should decreasing the loss function improve accuracy}. We start by pointing out the following  observations {which explain why the answer is not straightforward.}
	\begin{itemize}
		\item If the loss function $\bar L$  converges to $0$, then the accuracy converges to $100\%$ as in that case all predicted probability $p_{i(s)}$ converge to $1$. Nevertheless the training process is necessarily stopped at some time, before $\bar L$ reaches exactly $0$, see e.g. \cite{BerJab2018}. A first question is therefore {\em how close to $100\%$ the accuracy is when the loss function is very small}.
		\item In practice, we may not be able to reach perfect accuracy (or $0$ loss) on every training set.  This can be due to the large dimension, of the objects $s$ or of the space of parameters $\alpha$, which makes it difficult to {computationally} find a perfect minimizer even if one exists, with the usual issue of local minimizers.  Moreover, there may not even exist a perfect minimizer, due for instance to classification errors on the training set (some objects may have been assigned to the wrong class). As a consequence, a purely asymptotic comparison between loss and accuracy {as $\bar L\to 0$} is not enough, and we need to ask {\em how the loss function $\bar L$ correlates with the accuracy away from $\bar L\approx 0$}.
		\item {\em In {general}, there is no reason why decreasing $\bar L$ would increase the accuracy},  which is illustrated by the following elementary counter-example. Consider a setting with $3$ classes and an object $s$ which belongs to the first class and {such that} for the initial choice of parameter $\alpha$: $p_1(s,\alpha)=0.4,\ p_2(s,\alpha)=p_3(s,\alpha)=0.3$. We may be given a next choice of parameter $\alpha'$ {such that}: $p_1(s.\alpha')=0.45,\ p_2(s.\alpha')=0.5, p_3(s.\alpha')=0.05$. {Then the} object $s$ is well classified by the first choice of $\alpha$ and it is not well classified by the second choice $\alpha'$. Yet the loss function, which is simply $-\log p_1$ here, is obviously lower for $\alpha'$. {Thus, while loss improves, accuracy worsens.}
		\item {\em  {The previous example} raises the key issue of stability during training, which roughly speaking means that accuracy increases with training}. {Indeed,} one does not expect {accuracy to increase monotonically} during training, {and the question becomes {what conditions would guarantee that accuracy increases during training?}} {For example, one could require that}
		that the good set $G$ monotonically grows during training; in mathematical terms, that would mean that $G(\alpha)\subset G(\alpha')$ if $\alpha'$ are parameters from a later stage of the training. {However,} such a condition would be too rigid and likely counterproductive by preventing the training algorithms from reaching better classifiers. At the same time, {\em wild fluctuations in accuracy or in the good set $G(\alpha)$ would destroy any realistic hope of a successful training process, {i.e., finding a high-accuracy classifier}}.   
		\item {While the} \emph{focus of this work is on the stability during training,} another {crucial} question is the \emph{robustness} of the trained classifier, {which is the stability of identifying classes with respect to small perturbations of objects $s\in S$, in particular $s\in T$.}
		The issues of robustness and stability are  connected. Lack of stability during the training can often lead to  over-parametrization by extending the training process for too long. In turn over-parametrization typically implies  poor robustness outside of the training set. This  is connected to the Lipschitz norm of the classifier and we refer, for example, to~\cite{BalSinZou}.
		\item Since the marker of progress during training is the decrease of the loss function, stability is directly connected to how the loss function correlates with the accuracy. Per the known counterexamples, such correlation cannot always exist. Thus the {\em key question is to be able to identify which features of the dataset and of the classifier are critical {to establish such correlations and therefore} ensure stability. }
	\end{itemize}
	
	Our main contributions are to bring rigorous mathematical answers to this last question, in the context of simple deep learning algorithms {with the very popular SGD algorithm}. While part of our approach would naturally extend to other settings, it is intrinsically dependent on the  {approach} used to construct the classifier, {which is described in section \ref{sec:notations_formulation}}. More specifically, we proceed in {the following} two steps.
	\begin{itemize}
		\item[i.] We first identify conditions on the distribution of probabilities {$\left(p_1(s,\alpha),\cdots,p_K(s,\alpha)\right)$ defined in \eqref{eq:soft_classifier} for each $s\in T$
		which guarantee} that $\bar L$ correlates with accuracy. {Specifically, we show that under these conditions, loss is controlled by accuracy (vice-versa is trivial)}. At this stage such  {conditions} necessarily mix, in a non-trivial manner, the {statistical} properties of the dataset  with the properties {of the classifier (its architecture and parameters), introduced in section \ref{sec:notations_formulation}}. {Since these conditions depend on the classifier parameters which evolve with training, they cannot be verified before training starts,} 
		and they may depend on how the training proceeds.
		\item[ii.] The second step is to disentangle the previous  {conditions} to obtain separate conditions on the training set and on the neural network {architecture and parameters}. We are able to accomplish this on one of the {conditions} obtained in step $i$. 
		We provide an intuitive explanation of the main challenge here using DNN classifiers with only two classes as an example. The natural mathematical stability condition is on the distribution of the set of the differences in probability $p_{i(s)}(s,\alpha)-p_{i(s)+1}(s,\alpha)$ for each $s\in T$ which directly measures how well classified an object is (how much more likely it has to have been assigned to the correct class). Our main stability condition prevents a \emph{cluster} of very small probability differences $p_{i(s)}(s,\alpha)-p_{i(s)+1}(s,\alpha)$ in $\mathbb R$. However, a condition on the training dataset $T\subset\mathbb R^n$ is far more feasible for practical use. The challenge is finding features on the data manifold in $\mathbb R^n$ which rule out small clusters of probabilities in $\mathbb R$. The difficult part here lies in propagating the one-dimensional condition on probabilities backward through the DNN layers to a condition in $n\gg 1$ dimensions, and describing the features $T$ that may cause small clusters of $p_j(s,\alpha)$ (see Remark \ref{rmk:propagation_of_condition_B}). Specifically, if the data $T$ lies approximately on some manifold $M\subset\mathbb R^n$, flat portions of this manifold may lie parallel to the hyperplane kernel(s) of one or more of the linear maps of the DNN. In fact, only small clusters of probabilities in $\mathbb R$ prevent stability, therefore only small flat portions of the data manifold are bad for stability. The nonlinear activations of the DNN can also lead to small clusters, which can be explained as follows. Corners created through a transformation of large flat portions of $M$ by the nonlinearity of piecewise linear activation function can convert a large flat portion into multiple small flat portions (e.g., converting a straight line into a jagged line), resulting in small clusters in $\delta X$. 

	\end{itemize}
	
	Our hope is that {the present} approach and results will help develop a better understanding of why learning algorithms perform so well in many cases but still fail in other settings{. This is achieved} by providing a framework to evaluate the suitability of training sets and of neural network construction {for solving various classification problems}. 
	{Rigorous analysis of neural networks has of course already started and several approaches that are different from the present one have been introduced. We mention in particular the analysis}   
	of neural nets in terms of multiscale contractions involving wavelets and scattering transforms; see for example \cite{ChengChenMallat,Mallat,Mallat2} and \cite{CzajaLi} for  scattering transforms. {While there are a multitude of recent papers aimed to make neural net-based algorithms (also known as deep learning algorithms) faster, our goal is to help make such algorithms more stable. 
		
	  We conclude by summarizing the practical outcomes of our work:
            \begin{itemize}
            \item First, we derive and justify an explicitly verifiable conditions on the dataset that guarantee stability. We refer in particular to subsection \ref{sec:in_practice} for a discussion of how to check our conditions in practice.
            \item Our analysis characterizes how the distribution of objects in the training set and the distribution of the output of the classifier for misclassified objects affect stability of training.
              \item Finally, among many possible future directions of research, our results suggest that the introduction of \emph{multiscale} loss functions could significantly improve stability.
            \end{itemize}  
%
		
	\textbf{Acknowledgments.} The work of L. Berlyand and C. A. Safsten was supported by NSF DMREF DMS-1628411, and the Work of P.-E. Jabin was partially supported by NSF DMS Grant 161453, 1908739, NSF Grant RNMS (Ki-Net) 1107444, and LTS grant DO 14. The authors thank R. Creese and M. Potomkin for useful discussions. 
\section{Main results}\label{sec:results}
\subsection{Mathematical formulation of deep neural networks and stability \label{sec:notations_formulation}}
\subsubsection{Classifiers}\label{sec:classifier}

A parameterized family of soft classifiers $\phi(\cdot,\alpha):\mathbb R^n\to [0,1]^K$ must map objects $s$ to a list of $K$ probabilities. To accomplish this, a classifier is a composition $\phi(\cdot,\alpha)=\rho\circ X(\cdot,\alpha)$, where $X(\cdot,\alpha):\mathbb R^K\to\mathbb R^K$ and $\rho$ is the so-called \emph{softmax function} defined by
\begin{equation}\label{eq:softmax}
\rho(x)=\rho(x_1,\cdots, x_K)=\left(\frac{e^{x_1}}{\sum_{k=1}^{K}e^{x_k}},\cdots,\frac{e^{x_{K}}}{\sum_{k=1}^{K}e^{x_k}}\right).
\end{equation}
Clearly, $\rho(x)\in[0,1]^K$ and $\sum_{i=1}^K \rho_i(x)=1$, so $\rho\circ X(\cdot,\alpha)$ is a soft classifier no matter what what function $X(\cdot,\alpha)$ is used (though typically $X(\cdot,\alpha)$ is differentiable almost everywhere). The form of the softmax function means that we can write the classifier as
\begin{equation}\label{eq:classifier_with_softmax}
\phi(s,\alpha)=\rho\circ X(s,\alpha)=\left(\frac{e^{X_1(s,\alpha)}}{\sum_{k=1}^{K}e^{X_k(s,\alpha)}},\cdots,\frac{e^{X_K}}{\sum_{k=1}^{n_{K}}e^{X_k(s,\alpha)}}\right).
\end{equation} 
As in \eqref{eq:soft_classifier}, denote $\phi(s,\alpha)=(p_1(s,\alpha),\cdots,p_K(s,\alpha))$, where $p_k(s,\alpha)$ is the probability that $s$ belongs to class $k$ predicted by a classifier with parameters $\alpha$. A key property of the softmax function is that it is order preserving in the sense that if $X_i(s,\alpha)> \max_{j\neq i}X_j(s,\alpha)$, then $p_i(s)>\max_{j\neq i}p_j(s,\alpha)$. Therefore, the predicted class of $s$ can be determined by $X(s,\alpha)$. We define the key evaluation that determines whether an object is well-classified or not, namely
\begin{equation}\label{eq:defdeltaX}
\delta X(s,\alpha)=X_{i(s)}(s,\alpha)-\max_{j\neq i(s)} X_j(s,\alpha).
\end{equation}
If $\delta X(s,\alpha)>0$, then $X_{i(s)}(s,\alpha)$ is the largest component of $X(s,\alpha)$, which means that $p_{i(s)}(s,\alpha)$ is the largest probability given by $\phi(s,\alpha)$, and thus, $s$ is classified correctly. Similarly, if $\delta X(s,\alpha)<0$, $s$ is classified incorrectly.

As described in Section \ref{sec:introduction}, a classifier learns to solve the classification problem by training on a finite set $T$ where the correct classifications are known. Training is completed by minimizing a loss function which measures how far the classifier is from the exact classifier on $T$. While there are many types of loss functions, cross entropy loss introduced in \eqref{eq:introducing_loss} is very common, and it is the loss function we will consider in this work. The loss in \eqref{eq:introducing_loss} is the simple average of $-\log(p_{i(s)}(s,\alpha))$ over all $s\in T$, but there is no reason we cannot use the weighted average: 
\begin{equation}\label{eq:loss}
\bar L(\alpha)=-\sum_{s\in T}\nu(s)\log\left(p_{i(s)}(s,\alpha)\right),
\end{equation}
where $0<\nu(s)\leq 1$ and $\sum_{s\in T}\nu(s)=1$. Weights could be uniform, i.e., $\nu(s)=1/\# T$ for all $s\in T$, or weights can be non uniform if e.g. some $s\in T$ are more important than others. We can also use $\nu$ to measure the size of subset of $T$, e.g., if $A\subset T$, $\nu(A)=\sum_{s\in A}\nu(s)$. The quantity $\delta X(s,\alpha)$ defined above facilitates some convenient estimates on loss, which are shown in section \ref{sec:elementary_estimates}.

\subsubsection{Deep neural network structure}

Deep neural networks (DNNs) are a diverse set of algorithms with the classification problem being just one of many of their applications. In this article, however, we will restrict our attention to DNN classifiers. DNNs provide a useful parameterized family $X(\cdot,\alpha):\mathbb R^n\to\mathbb R^K$ which can be composed with the softmax function to form a classifier. The function $X(\cdot,\alpha)$ is a composition of several simpler functions: 
\begin{equation}\label{eq:dnn_logit}
X(\cdot,\alpha)=f_M(\cdot,\alpha_M)\circ f_{M-1}(\cdot,\alpha_{M-1})\circ\cdots\circ f_1(\cdot,\alpha_1).
\end{equation}
Each $f_k$ for $1\leq k\leq M$ is a composition of an affine transformation and a nonlinear function. The nonlinear function is called an \emph{activation function}. A typical example is the so-called \emph{rectified linear unit} (ReLU), which is defined for any integer $N\geq 0$ by $\text{ReLU}(x_1,\cdots,x_N)=\left(\max\{0,x_1\},\cdots,\max\{0,x_N\}\right).$ Another example is the componentwise absolute value, $\text{abs}(x_1,\cdots,x_N)=(|x_1|,\cdots,|x_N|)$. The affine transformation depends on many parameters (e.g., matrix elements) which are denoted together as $\alpha_k$. The collection of all DNN parameters is denoted $\alpha=(\alpha_1,\cdots,\alpha_{M-1})$.

Though we use DNN classifiers as a guiding example for this article, most results apply to classifiers of the form $\phi(\cdot,\alpha)=\rho\circ X(\cdot,\alpha)$ where $\rho$ is the softmax function, and $X$ is any family of functions parameterized by $\alpha$. In this article, we will use the term \emph{classifier} to refer to any composition $f_M\circ X(\circ,\alpha)$, while \emph{DNN classifier} refers to a classifier where $X$ has the structure of a DNN.

\subsubsection{Training, Accuracy, and Stability}\label{sec:training}
Training a DNN is the process of minimizing loss. In practice, one randomly selects a starting parameter $\alpha(0)$, and then uses an iterative minimization algorithm such as gradient descent or stochastic gradient descent to find a minimizing $\alpha$. Whatever algorithm is used, the $n^\text{th}$ iteration calculates $\alpha(n)$ using $\alpha(t)$ for $0\leq t\leq n-1$. Our results do not depend on which algorithm is used for training, but will make the essential assumption that loss decreases with training, $\bar L(\alpha(t_2))\leq\bar L(\alpha(t_2))$ for $t_2>t_1$. Throughout this article, we will abuse notation slightly by writing $\bar L(t):=\bar L(\alpha(t))$.

Accuracy is simply the proportion of well-classified elements of the training set. Using $\delta X$ and the weights $\nu(s)$, we can define a function that measures accuracy for all times $t$ during training:
\begin{equation}\label{eq:accuracy}
\acc(t)=\nu\left(\{s\in T:\delta X(s,\alpha(t))>0\}\right).
\end{equation}
We will find it useful to generalize the notion of accuracy. For instance, we may want to know how many $s\in T$ are not only well-classified, but are well-classified by some margin $\eta\geq 0$. We therefore define the \emph{good set of margin $\eta$} as
\begin{equation}\label{eq:good_set}
G_\eta(t)=\{s\in T:\delta X(s,\alpha(t))>\eta\}.
\end{equation}
Observe that $\nu(G_0(t))=\acc(t)$. For large $\eta$, the good set comprises those elements of $T$ that are exceptionally well-classified by the DNN with parameter values $\alpha(t)$. We will also consider the \emph{bad set of margin $\eta$}
\begin{equation}\label{eq:bad_set}
B_{-\eta}(t)=\{s\in T:\delta X(s,\alpha(t))\leq -\eta\}
\end{equation}
which are the elements that are misclassified with a margin of $\eta$ by the DNN with parameters $\alpha(t)$.

Stability is the idea that when, during training, accuracy becomes high enough, it remains high for all later times. Specifically, we will prove that under certain conditions, for all $\epsilon$, there exists $\delta$ and $\eta$ so that if at some time $t_0$, $\nu(G_\eta(t_0))>1-\delta$, then at all later times, $\acc(t)>1-\epsilon$.

	\subsection{Preliminary remarks and examples}\label{sec:preliminary_remarks_and_examples}

\subsubsection{Relationship between accuracy and loss}\label{sec:accuracy_and_loss}
Intuitively, accuracy and loss should be connected, i.e., as loss decreases, accuracy increases and vice versa. However, as we will see in examples below, this is not necessarily the case. Nevertheless, we can derive some elementary relations between the two. For instance, from equation \eqref{eq:loss_estimates}, we may easily derive a bound on the good set $G_\eta(t)$ via $\bar L(t_0)$ for some $\eta$ for all times $t\geq t_0$:
\begin{equation}\label{eq:good_set_and_loss}
\nu(G_\eta(t))\geq 1-\frac{\bar L(t_0)}{\log\left(1+e^{-\eta}\right)}\geq 1-2e^\eta\bar L(t_0),
\end{equation}
and in particular,
\begin{equation}\label{eq:loss_and_accuracy}
\nu(\acc(t))=\nu(G_0(t))\geq 1-\frac{\bar L(t_0)}{\log 2}.
\end{equation}
This shows if loss is sufficiently small at time $t_0$, then accuracy will be high for all later times. But this is not the same as stability; stability means that if \emph{accuracy} is sufficiently high at time $t_0$, then it will remain high for all $t>t_0$. To obtain stability from \eqref{eq:good_set_and_loss}, we somehow \emph{need to guarantee that high accuracy at time $t_0$ implies low loss at time $t_0$}. 
	
	\begin{example}\label{ex:small_cluster}
		This example will demonstrate instability in a soft classifier resulting from a small number of elements of the training set that are misclassified. Let $T$ be a training set with $1000$ elements with uniform weights, each classified into one of two classes. Suppose that at some time $t_0$, after some training, the parameters $\alpha(t_0)$ are such that most of the $\delta X(s,\alpha(t_0))$ values are positive, but a few $\delta X(s,\alpha(t_0))$ are clustered near $-0.6$. An example histogram of these $\delta X(s,\alpha(t_0))$ values is shown in Figure \ref{fig:example_1_deltaX}a. The loss $\bar L(t_0)$ accuracy can be calculated using \eqref{eq:loss} and \eqref{eq:accuracy} respectively. For the $\delta X(s,\alpha(t_0))$ values in Figure \ref{fig:example_1_deltaX}a, the loss and accuracy are is
		\begin{equation*}
		\bar L(t_0)=0.1845\quad \acc(t_0)=0.95.
		\end{equation*}
		Suppose that at some later time $t=t_0$, the $\delta X(s,\alpha(t_0))$ values are those shown in Figure \ref{fig:example_1_deltaX}b. Most $\delta X$ values have improved from $t=t_0$ to $t=t_1$, but a few have worsened. We can again calculate loss and accuracy:
		\begin{equation*}
		\bar L(t_1)=0.1772\quad \acc(t_1)=0.798.
		\end{equation*}
		Since $\bar L(t_1)<\bar L(t_0)$, this example satisfies the condition that loss must decrease during training. However, accuracy has fallen considerably. This indicates an unstable classifier. The instability arises because enough objects have sufficiently poor classifications that by improving their classification (increasing $\delta X(s,\alpha)$), training can still decrease loss if a few correctly classified objects become misclassified, decreasing accuracy.
		
		\begin{figure*}[h]
			\centering
			\begin{subfigure}[t]{0.45\textwidth}
				\centering
				\includegraphics[width=3in]{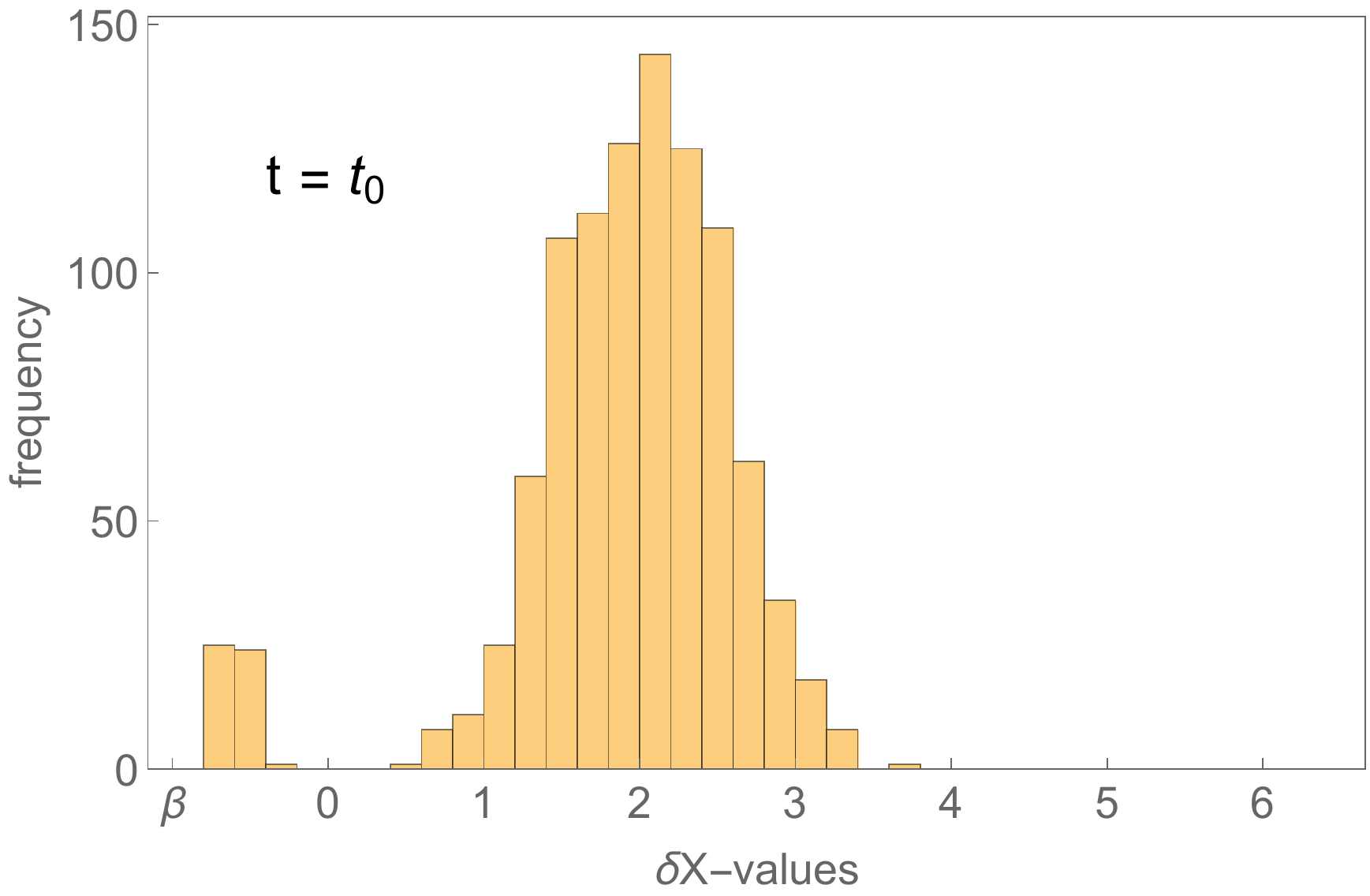}
				\caption{Histogram of $\delta X(s,\alpha(t_0))$ values}
			\end{subfigure}%
			~ 
			\begin{subfigure}[t]{0.45\textwidth}
				\centering
				\includegraphics[width=3in]{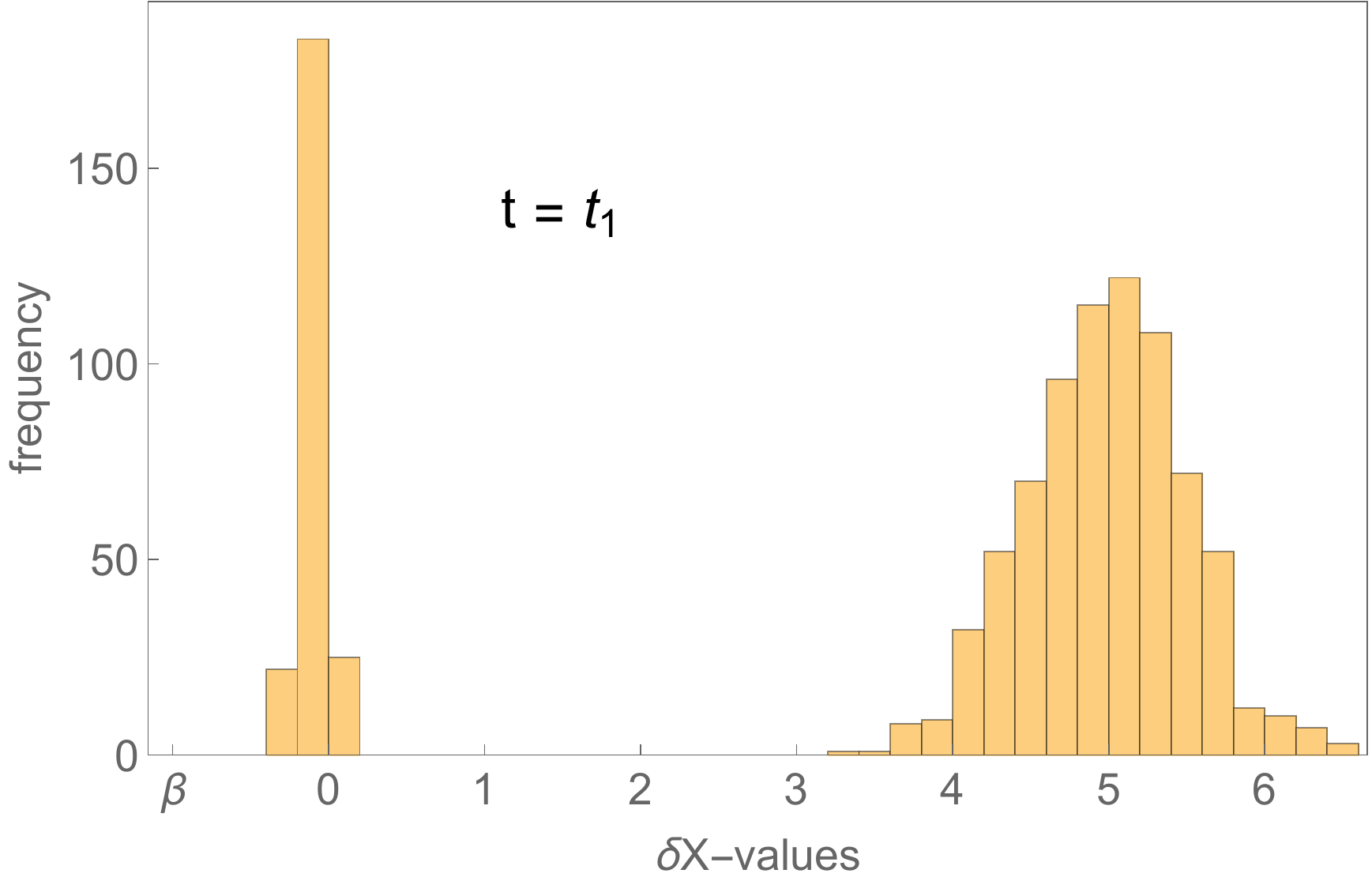}
				\caption{Histogram of $\delta X(s,\alpha(t_1))$ values}
			\end{subfigure}
			\caption{These two histograms of $\delta X(s,\alpha(t))$-values show that when $t=t_0$ (a), $95\%$ of $\delta X$-values are positive, but when $t=t_1$, only $79.8\%$ of $\delta X$ values are positive, indicating a decrease in accuracy and therefore, this DNN is unstable.}
			\label{fig:example_1_deltaX}
		\end{figure*}
	\end{example}
	\subsection{Main Results}\label{sec:main_results}
	As explained above, to establish stability, we must bound $\bar L(t_0)$ in terms of the accuracy at $t_0$. Assuming that accuracy at $t_0$ is high, we may separate the training set into a large good set $G_\eta(t_0)$ for some $\eta>0$ its small complement $G_\eta^C(t_0)$. Using \eqref{eq:loss_estimates}, we may make the following estimate, details for which are found in Section \ref{sec:appendix}.
	\begin{equation}\label{eq:first_loss_bound}
	\bar L(t_0)\leq (K-1)e^{-\eta}+\sum_{s\in G^C_\eta(t_0)}\nu(s)\log\left(1+(K-1)e^{-\delta X(s,\alpha(t_0))}\right).
	\end{equation}
	The first term in \eqref{eq:first_loss_bound} is controlled by $\eta$. If $\eta$ is even moderately large, then the second term dominates \eqref{eq:first_loss_bound}, with most of the loss coming from a few $s\in G^C_\eta(t_0)$. It is therefore sufficient to control the distribution of $\{\delta X(s,\alpha(t_0)):s\in G_\eta^C(t_0)\}$.
	There are two primary reasons why this distribution may lead to large $\bar L(t_0)$, both relating to $\beta:=\min_{s\in T}\delta X(s,\alpha(t_0))$:
	\begin{enumerate}
		\item There is a single $\delta X(s,\alpha(t_0))$ that is very large and negative, that is, $\beta\ll 0$.
		\item $\beta$ is not far from zero, but there are many $\delta X(s,\alpha(t_0))$ near $\beta$.
	\end{enumerate}
	To obtain a good bound on $\bar L(t_0)$, we must address both issues:
	\begin{enumerate}
		\item Assume that $\delta X(s,\alpha(t_0))>-1$ for all $s\in T$, i.e., the bad set $B_{-1}(t_0)$ is empty.
		\item Impose a condition that prevents $\{\delta X(s,\alpha(t)):s\in T\}$ form concentrating near $\beta$. Such conditions are called \emph{small mass conditions} since they only concern $\delta X(s,\alpha(t_0))$ for $s$ in the small mass of objects in $G^C_\eta(t_0)$. Example \ref{ex:small_cluster} illustrates this issue.
	\end{enumerate}
	{In the following subsections, we introduce two small mass conditions A and B that guarantee stability. We show that each condition leads to small loss $\bar L(t_0)$ and ultimately to stability. Condition A leads to the tightest stability result, but it must be verified for all time $t>t_0$, whereas condition B is verified only at the initial time $t=t_0$. While conditions A and B are quite precise mathematical conditions, they do not directly impose conditions on the given data set $T$, with which one usually deals in practical problems. That is why in section \ref{sec:cond_B}, we introduce the so-called \emph{no-small-isolated data clusters (NSIDC)}  condition on the data set $T$ which is sufficient for condition B, but does not depend the network parameters $\alpha(t)$. Therefore the stability result under the NSIDC condition does not depend on these parameters, unlike stability under condition B.	Even though this condition leads to a less-tight stability result than condition A, it is preferable in practice. In fact, further relaxation of the NSIDC condition for easy verification which only makes stability highly likely rather than guaranteed may be a good future direction.}
	
	\subsubsection{Condition on data and DNN for stability.}\label{sec:cond_A}
	The first small mass condition we will consider ensures that the distribution of $\delta X(s,\alpha(t_0))$ values decays very quickly near $\beta$, the minimum $\delta X(s,\alpha(t_0))$ value, so there cannot be a concentration of $\delta X$ near $\beta$, resulting in high accuracy at $t_0$ implying low loss at $t_0$. Additionally, by applying this condition at all times, not just at $t=t_0$, we can improve the estimate \eqref{eq:good_set_and_loss}.
	\begin{definition}\label{def:cond_A}
		The set $\{\delta X(s,\alpha(t)):s\in T\}$ satisfies \emph{condition A} at time $t$ if there exist constants $\Lambda \geq 1$, $m_0>0$, $\psi>0$, and $0<\phi<1$ so that for $x_1=0,\beta$ and all $x_2>x_1$,
		\begin{equation}\label{eq:cond_A}
		\begin{split}
		\nu\left(\left\{s\in T:\delta X(s,\alpha(t))<\frac{x_1+x_2}{2}\right\}\right)-m_0&\leq \Lambda \nu\left(\left\{s\in T:\delta X(s,\alpha(t))<x_1\right\}\right)^\phi\\
		&\hspace{-20pt} +\Lambda \nu\left(\left\{s\in T:\delta X(s,\alpha(t_0))<x_2\right\}\right)^{\psi+1}
		\end{split}
		\end{equation}
	\end{definition}
	How precisely this condition limits small clusters is presented in Section \ref{sec:condition_derivations}, but a brief description of the role of each constant is given here:
	\begin{itemize}
		\item $\Lambda$, the most important constant in \eqref{eq:cond_A}, controls the degree to which $\delta X$-values can concentrate near $\beta$. In particular, smaller $\Lambda$ is, the less concentrated the $\delta X$ values are, leading to a better stability result.
		\item $\psi$ controls how quickly the distribution of $\delta X(s,\alpha(t))$ values decays to zero near $\beta$. The faster the decay, the larger $\psi$ may be, and the better for stability. Calculations are greatly simplified if $\psi=1$.
		\item $\phi$ controls how fast the $\delta X$ distribution decays near $0$, leading to a bound on the size of $G_{\eta^\ast}(t)$ for $t>t_0$ and $\eta^\ast>0$. A simple calculation shows that for \eqref{eq:cond_A} to hold, we require $0<\phi<1$. 
		\item $m_0$ accounts for the fact that the training data is a discrete set. In \eqref{eq:cond_A}, if $x_1=\beta$ and $x_2=\beta+\eps$ for some small $\eps$, then it is possible that 
		\begin{equation}
			\left\{s\in T:\delta X(s,\alpha(t))<\frac{x_1+x_2}{2}\right\}=\left\{s\in T:\delta X(s,\alpha(t))<x_2\right\},
		\end{equation}
		with both sets containing a single object $s_0$. Furthermore,
		\begin{equation}
		\{s\in T:\delta X(s,\alpha(t))<x_1\}=\emptyset.
		\end{equation} 
		Thus, taking $\psi=1$, we would require $\Lambda\geq 1/\nu(s_0)\gg 1$ to satisfy \eqref{eq:cond_A}. Such a large $\Lambda$ means that the stability result will be rather weak. In this sense, \eqref{eq:cond_A} identifies the individual point $\{s_0\}$ as a very small cluster. Subtracting $m_0$ from the left side of the inequality allows the inequality to hold with much smaller $\Lambda$, accommodating a discrete dataset. Typically, we choose $m_0$ equal to about the mass of 5-10 elements of the training set. Though the presence of a small $m_0$ does not substantially affect stability, it does complicate the proofs. Therefore, in Theorem \ref{thm:stabilityA}, we assume $m_0=0$, which corresponds to the limit $\# T\to\infty$.
	\end{itemize}
	
	With condition A in hand, we can state the first stability result. Proofs and supporting lemmas are left for Section \ref{sec:proofs}.
	
	\begin{theorem}\label{thm:stabilityA}
		Suppose that $T\subset\mathbb R^n$ with weights $\nu(s)$ is a training set for a classifier such that $\{\delta X(s,\alpha(t)):s\in T\}$ which satisfies condition A for some constants $\Lambda \geq 1$, $\psi=1$, $0<\phi<1$, and $m_0=0$. for all $t\geq t_0$. Then for every $\varepsilon>0$ there exist $\delta(\Lambda ,\varepsilon),\eta(\Lambda ,\varepsilon)>0$ such that if good and bad sets at $t=t_0$ satisfy
		\begin{equation}\label{good_set_big_and_bad_set_empty_1}
		\nu(G_\eta(t_0))>1-\delta\quad\text{and}\quad B_{-1}(t_0)=\emptyset,
		\end{equation}
		then for all $t\geq t_0$,
		\begin{equation}\label{eq:stability_1}
		\text{acc}(t)=\nu(G_0(t))\geq 1-\varepsilon.
		\end{equation}
		and
		\begin{equation}\label{eq:stability_1_generalized}
		\nu(G_{\eta^\ast}(t))>1-\varepsilon-\left(3/4\right)^{-\frac{\log(3\Lambda \bar L(t_0))}{2\eta^\ast}}
		\end{equation}
		for all $\eta^\ast$: $0<\eta^\ast<-\log(3\Lambda \bar L(t_0))$.
	\end{theorem}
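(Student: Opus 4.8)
The plan is to run the argument in three stages. Write $F_t(x):=\nu(\{s\in T:\delta X(s,\alpha(t))<x\})$ for the distribution function of the margins $\delta X(s,\alpha(t))$, so that $\nu(G_\eta(t))=1-F_t(\eta+)$, and note that with $m_0=0$, $\psi=1$ and $x_1=0$ the only instance of condition A we shall need is a \emph{doubling-type} inequality
\begin{equation}\label{eq:sketch_doubling}
F_t\!\left(\tfrac{x}{2}\right)\ \le\ \Lambda\,F_t(0)^{\phi}\ +\ \Lambda\,F_t(x)^{2}\qquad(x>0).
\end{equation}
Stage 1 uses the hypotheses at $t_0$ to make $\bar L(t_0)$ as small as we wish; Stage 2 deduces \eqref{eq:stability_1} from the elementary loss--accuracy estimates together with $\bar L(t)\le\bar L(t_0)$; Stage 3 bootstraps \eqref{eq:sketch_doubling} into the doubly-exponential tail bound \eqref{eq:stability_1_generalized}.

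For Stage 1 I would start from \eqref{eq:first_loss_bound}: since $B_{-1}(t_0)=\emptyset$, every $\delta X(s,\alpha(t_0))>-1$, so each summand there is at most $\log(1+(K-1)e)$, and since $\nu(G^C_\eta(t_0))<\delta$ this gives $\bar L(t_0)<(K-1)e^{-\eta}+\delta\log(1+(K-1)e)$. Given $\varepsilon$, I first fix $\eta=\eta(\Lambda,\varepsilon)$ large, then $\delta=\delta(\Lambda,\varepsilon)$ small, so that $\bar L(t_0)$ falls below every threshold used below; in particular below $1/(3\Lambda)$ (so the interval for $\eta^\ast$ in the theorem is nonempty), small enough that $\bar L(t_0)/\log 2<\varepsilon/2$, and small enough that the error terms generated in Stage 3 do not exceed $\varepsilon/2$. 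Stage 2 is then immediate: by monotonicity $\bar L(t)\le\bar L(t_0)$ for $t\ge t_0$, so \eqref{eq:loss_and_accuracy} gives $\acc(t)=\nu(G_0(t))\ge 1-\bar L(t_0)/\log 2\ge 1-\varepsilon$.

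Stage 3 is the heart of the matter. Fix $t\ge t_0$ and set $\eta_{\max}:=-\log(3\Lambda\bar L(t_0))>0$, so that $e^{-\eta_{\max}}=3\Lambda\bar L(t_0)$ is small; the elementary bound \eqref{eq:good_set_and_loss} at margin $\eta_{\max}$ gives
\[
g_0:=\Lambda F_t(\eta_{\max})\ \le\ \frac{\Lambda\bar L(t_0)}{\log\!\big(1+3\Lambda\bar L(t_0)\big)},
\]
which tends to $\tfrac13$ as $\bar L(t_0)\to 0$ and is therefore $<\tfrac38$ once $\bar L(t_0)$ is small. I then iterate \eqref{eq:sketch_doubling} along the dyadic points $y_k:=\eta_{\max}2^{-k}$: writing $g_k:=\Lambda F_t(y_k)$ and $\tau:=\Lambda^2 F_t(0)^{\phi}$, and multiplying \eqref{eq:sketch_doubling} at $x=y_k$ by $\Lambda$, yields the scalar recursion $g_{k+1}\le g_k^2+\tau$. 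Here $\tau$ is tiny: by \eqref{eq:loss_and_accuracy} $F_t(0)\le\bar L(t_0)/\log 2$, so $\tau\le\Lambda^2(\bar L(t_0)/\log 2)^{\phi}$, which Stage 1 makes as small as desired (this is precisely where $\phi>0$ enters, and $\psi=1$ is what makes the nonlinear term a clean square). Analyzing $g_{k+1}\le g_k^2+\tau$ around its stable fixed point $g_\ast\le 2\tau$ shows $g_k\le\tfrac12(3/4)^{2^k}+C\tau$ for all $k$ and an absolute constant $C$. Finally, given $\eta^\ast\in(0,\eta_{\max})$, let $k$ be the largest index with $y_k>\eta^\ast$; then $y_{k+1}\le\eta^\ast$, i.e. $2^{k}>\eta_{\max}/(2\eta^\ast)$, and monotonicity of $F_t$ gives
\[
1-\nu(G_{\eta^\ast}(t))\ =\ F_t(\eta^\ast+)\ \le\ F_t(y_k)\ =\ \frac{g_k}{\Lambda}\ \le\ \frac{1}{2\Lambda}\Big(\tfrac34\Big)^{\eta_{\max}/(2\eta^\ast)}+\frac{C\tau}{\Lambda}.
\]
Since $\eta_{\max}=-\log(3\Lambda\bar L(t_0))$, $C\tau/\Lambda\le\varepsilon$, and $\Lambda\ge 1$, this rearranges to exactly \eqref{eq:stability_1_generalized}.

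The step I expect to be the main obstacle is closing Stage 3 with the constants the theorem advertises. First, the additive term $\Lambda F_t(0)^{\phi}$ in \eqref{eq:sketch_doubling} must be controlled uniformly over all $t\ge t_0$ — this is what forces the use of loss monotonicity together with \eqref{eq:loss_and_accuracy}, and it is the reason the exponents $\phi>0$ and $\psi=1$ enter as they do. Second, the factor $2$ in the exponent of \eqref{eq:stability_1_generalized} is not cosmetic: the doubly-exponential decay is obtained only on the dyadic grid $\{y_k\}$, and an arbitrary $\eta^\ast$ may sit a full factor of $2$ below the closest grid point above it, so one only gets $2^{k}>\eta_{\max}/(2\eta^\ast)$ rather than $\eta_{\max}/\eta^\ast$. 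Beyond that, one must verify the base case $g_0<\tfrac38$, confirm that the hypotheses really can force $3\Lambda\bar L(t_0)<1$, and keep track of the (harmless, because $m_0=0$) distinction between $\{\delta X<x\}$ and $\{\delta X\le x\}$ when translating bounds on $F_t$ into bounds on the sets $G_\eta$ and $B_{-\eta}$.
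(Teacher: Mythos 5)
Your proof is correct, and the core of it (Stage 3: iterating condition A at $x_1=0$ along the dyadic grid $\eta_k=\eta_{\max}2^{-k}$) matches the paper's. The genuine difference is in Stage 1. To make $\bar L(t_0)$ small you use only that $B_{-1}(t_0)=\emptyset$ bounds each term of the sum in \eqref{eq:first_loss_bound} by $\log(1+(K-1)e)$ and that $\nu(G_\eta^C(t_0))<\delta$, giving $\bar L(t_0)<(K-1)e^{-\eta}+\delta\log(1+(K-1)e)$; the paper instead invokes condition A a second time, at the other admissible point $x_1=\beta$ (where the $\Lambda\nu(\{\delta X<x_1\})^\phi$ term vanishes since $\nu(\{\delta X<\beta\})=0$), and iterates $\nu(I_{k+1})\le\Lambda\nu(I_k)^2$ to obtain Lemma \ref{lem:loss_bound_1}'s doubly exponential bound $\bar L(t_0)\le(K-1)\bigl(e^{-\eta}+Ce^{-\sqrt{-2\log(\Lambda\delta)(1+\eta)}}\bigr)$, whose proof in turn rests on the technical Lemma \ref{lem:double_exponential_sum_constant}. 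Your route avoids both lemmas and is noticeably shorter. What the paper's sharper Stage-1 bound buys is that $\delta$ can be fixed at $1/(2\Lambda)$ independently of $\varepsilon$ (cf.\ Remark \ref{rmk:explicit_constants_1}), with only $\eta$ shrinking the loss; in your version both $\delta$ and $\eta$ must be tuned to $\varepsilon$. Both are allowed by the theorem's phrasing ``there exist $\delta(\Lambda,\varepsilon),\eta(\Lambda,\varepsilon)$,'' so your simplification is legitimate for the theorem as stated, but it yields a strictly weaker quantitative conclusion. Within Stage 3 your details are also organized differently in a way that still closes: you get the base bound $g_0<3/8$ directly from \eqref{eq:good_set_and_loss} at margin $\eta_{\max}=-\log(3\Lambda\bar L(t_0))$, whereas the paper establishes $\nu(G^C_{\eta_0}(t))\le 1/(2\Lambda)$ by contradiction; and you phrase the iteration as a fixed-point analysis of $g_{k+1}\le g_k^2+\tau$ with $\tau=\Lambda^2 F_t(0)^\phi$, whereas the paper does a case split around the threshold $\sqrt 2\,(\bar L(t_0)/\log 2)^{\phi/2}$. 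These two framings produce the same shape of bound, and the factor $2$ you lose in the exponent from the dyadic interpolation matches the paper's. The one claim you flag as needing verification — that $g_k\le\tfrac12(3/4)^{2^k}+C\tau$ closes under $g_{k+1}\le g_k^2+\tau$ with an absolute $C$ once $\tau$ is small — does go through by the induction $(a_k+C\tau)^2+\tau\le a_{k+1}+C\tau$ with $a_k=\tfrac12(3/4)^{2^k}$, $a_k^2=\tfrac12 a_{k+1}$, for, say, $C\ge 4$ and $\tau$ sufficiently small; you should write that step out if you formalize the argument.
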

	
	\begin{remark}
		The conclusion of theorem \ref{thm:stabilityA} depends on the hypothesis that condition A holds. Short of brute force calculation on a case-by-case basis, there is at present no way to determine whether condition A holds for a given training set $T$ and parameter values $\alpha(t_0)$, and for which constants $\Lambda$, $\psi$, $\phi$, and $m_0$ it might hold. Furthermore, Theorem \ref{thm:stabilityA} does not control the dynamics of training with sufficient precision to guarantee that if condition A holds at $t_0$ then it will also hold for all $t>t_0$. Therefore, we have to further strengthen the hypotheses by insisting that condition A holds for all $t>t_0$.
	\end{remark}

	\begin{remark}
		Though a more general version of Theorem \ref{thm:stabilityA} can be proved for $\psi>0$ and $m_0\geq 0$, both the statement and proof are much more tractable with $\psi=1$ and $m_0=0$.
	\end{remark}
	
	\begin{remark}\label{rmk:explicit_constants_1}
		For Theorem \ref{thm:stabilityA}, it is sufficient to choose
	\begin{equation}\label{eq:explicit_constants_1}
		\delta=\frac{1}{2\Lambda }\quad\text{and}\quad\eta=\max\left\{1,\left(\frac{1}{\phi}\log\left(\left(\frac{10(K-1)}{\log 2}\right)^\phi\frac{3\Lambda }{\varepsilon}\right)\right)^2,\log\left(\frac{30 \Lambda (K-1)}{\log 2}\right)^2\right\}.
	\end{equation}
	\end{remark}
%
%
%
	\subsubsection{Doubling conditions on data independent of DNN to ensure stability}\label{sec:cond_B}

	Our second condition will also limit the number of $\delta X$ values that may cluster near $\beta$. It does this by ensuring that if a few $\delta X(s,\alpha(t_0))$ are clustered near $\beta$, then there must be more $\delta X$ values that are larger than the $\delta X$ values in the cluster. This means that the cluster near $\beta$ is not isolated, that is not all of the $s\in G^C_\eta(t_0)$ can have $\delta X(s,\alpha(t_0))$ near $\beta$.  Observe that this condition is on both the data and the DNN.  In this section we  show this second  condition follows  from a so-called doubling condition on the dataset only, which is obviously advantageous in applications
	
	Before introducing rigorously the so-called doubling condition on data that ensures stability of training, we provide its heuristic motivation. Consider data points in $\mathbb R^n$ and a bounded domain $W\subset\mathbb R^n$ containing some data points. Rescale $W$ by a factor of $2$ to obtain the ``doubled'' domain $2W$. If the number of data points in $2W$ is the same as in $W$, then the data points in $W$ form a small cluster confined in $W$. In contrast, if $2W$ contains more points than $W$, then the the cluster of points in $W$ is not isolated, which can be formulated as the following no-small-cluster (NSC) condition:
	\begin{equation}\label{eq:heuristic_doubling_condition}
	\text{\# of points in $2W$}\geq (1+\sigma)\times(\text{\# of points in $W$)~~~~~for some $\sigma>0$}
	\end{equation}
	See Figure \ref{fig:doubling_condition} for an illustration of how the doubling condition detects clusters.	
	\begin{figure}
		\centering
		\begin{subfigure}{.5\textwidth}
			\centering
			\includegraphics[width=.8\linewidth]{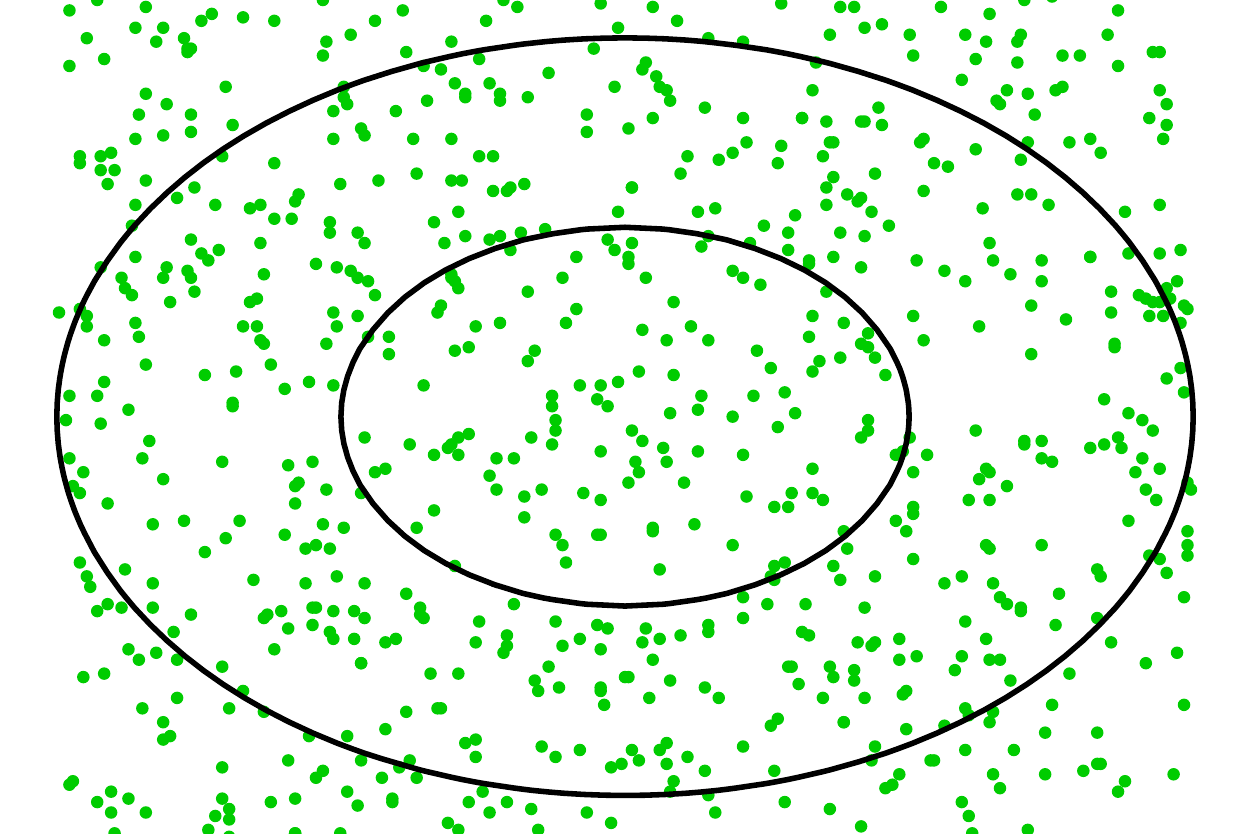}
			\caption{The big ellipse contains more data than the small ellipse so the data in the small ellipse does not comprise a cluster.}
			\label{fig:good_doubled_domain}
		\end{subfigure}%
		\begin{subfigure}{.5\textwidth}
			\centering
			\includegraphics[width=.8\linewidth]{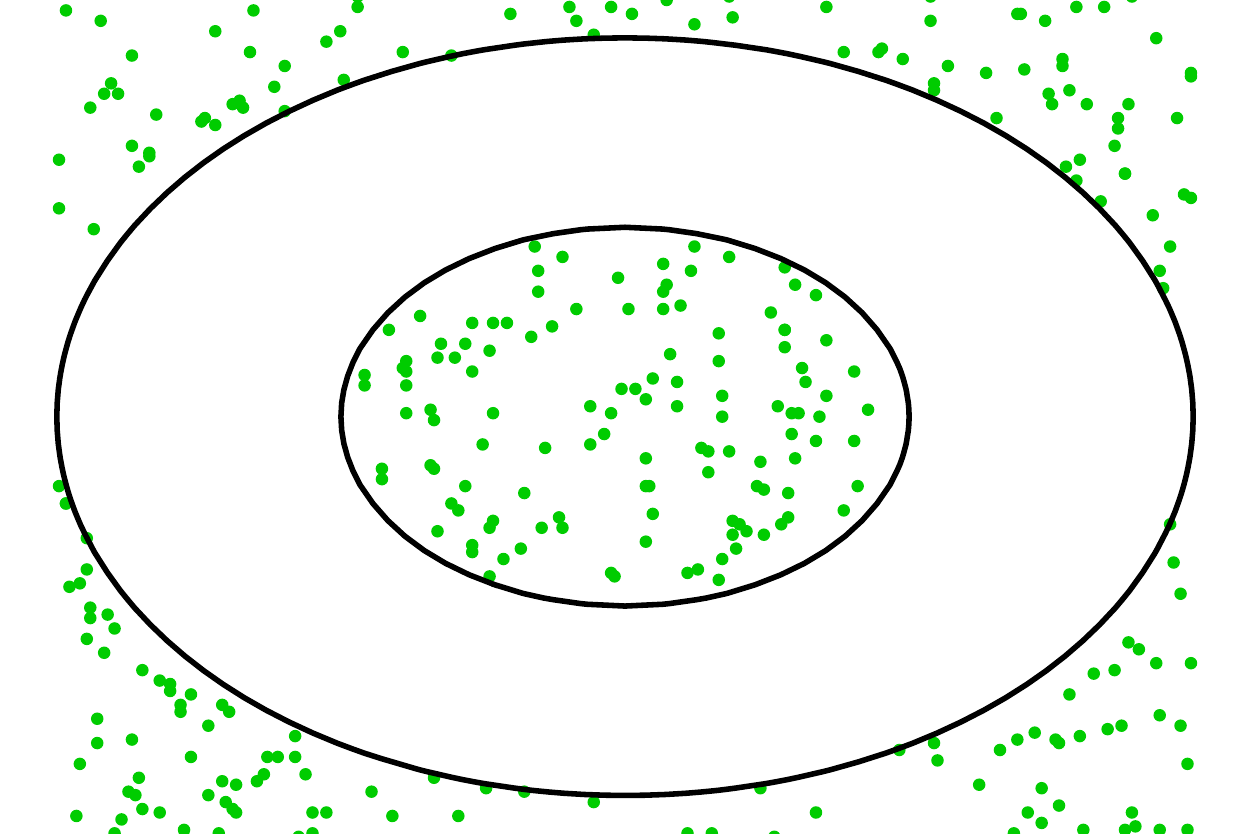}
			\caption{The big ellipse does not contain more data than the small ellipse, so the data in the small ellipse is a cluster.}
			\label{fig:bad_doubled_domain}
		\end{subfigure}
		\caption{An illustration of the idea of a doubling condition.}
		\label{fig:doubling_condition}
	\end{figure}
	This condition will be applied to $\{\delta X(s,\alpha(t_0)):s\in T\}\subset\mathbb R$ and for the data points in $T\subset R^n$. It is more practical to check this condition on the dataset which is why we here we explain the underlying heuristics for the condition on the dataset. Consider a 2D ellipse translated along a straight line to obtain an infinite cylinder with elliptical cross section. Next, introduce the truncated cylinder $E$ by intersecting it with a half-space in 3D, (e.g., all $(x,y,z)$ where $z\geq 0$). Then the toy version of the NSC condition for $E$ is
	\begin{equation}\label{eq:toy_cond}
	\nu(\kappa\varepsilon E)\geq (1+\sigma)\nu(\varepsilon E),
	\end{equation}
	where doubling has been replaced with an arbitrary rescaling parameter $\kappa>1$. Finally, $\varepsilon<\varepsilon_0$ ensures that the domain $\varepsilon E$ is sufficiently small because some features of the data manifold are seen only at small scales, and allowing large scales which rule out datasets which are fine for stability. For example, a DNN may map a flat portion of the data manifold to a point due to degeneracy of its linear maps, leading to a small cluster. Therefore small $\varepsilon$ is needed to resolve non-flatness on small scales when it may appear as flatness on a large scale, e.g., a sine curve from far away looks flat.
	
	We will present two doubling conditions which lead to stability. The first is a doubling condition on the set of $\delta X$ values in $\mathbb R$. The other is a doubling condition on the training data $T\subset\mathbb R^n$.
	 Both of the  following two definitions  provide precise  formulation of the doubling condition  \eqref{eq:heuristic_doubling_condition} and its generalization \eqref{eq:toy_cond}}.	\begin{definition}\label{def:cond_B}
		The set $\{\delta X(s,\alpha(t_0)):s\in T\}$ satisfies \emph{condition B} at time $t_0$ if there exists a mass $m_0$ and constants $\kappa>1$, and $\delta,\sigma>0$ such that for all $x_0$, there exists $I_0=I(x_0)$ and for all intervals $I\subset\mathbb R$ around $x_0$ with $|I|<I_0$,
		\begin{equation}\label{eq:cond_B}
		\nu\left(\{s\in T:\delta X(s,\alpha(t_0))\in\kappa I\}\right)\geq\min\Big\{\delta,\;\max\big\{m_0,\;(1+\sigma)\,\nu\left(\{s\in T:\delta X(s,\alpha(t_0))\in I\}\right)\big\}\Big\},
		\end{equation}
		where $\kappa I$ is the interval with the same center as $I$ but whose width is multiplied by $\kappa$.
	\end{definition}
Condition B comes with a notable advantage: it is a consequence of a similar condition on the training set $T$ called the \emph{no small data clusters condition}. For this, we define truncated slabs $Sl$ as the intersection of a slab in some direction $u$ and centered at $s$ with a finite number $k$ of linear constraints
\begin{equation}
Sl=Sl(Q,s,v_1,t_1,\ldots,v_k,t_k)=\left\{x,\ |(x-s)\cdot u|\leq 1\ \mbox{and}\ v_i\cdot x\leq t_i\ \forall i=1,\ldots, k\right\},\label{truncatedcylinder0}
\end{equation}
We also consider the rescaled slab $\kappa\,Sl$ which is obtained by changing the width of $Sl$ by a factor $\kappa$,
        \begin{equation}
\kappa\,Sl=\left\{x,\ |(x-s)^T\cdot u|\leq \kappa\ \mbox{and}\ v_i\cdot x\leq t_i\ \forall i=1,\ldots, k\right\}.\label{dilatedcylinder0}
        \end{equation}
We can now state our no small data clusters condition
	\begin{definition}\label{def:no_small_islated_data_cluster}
		Let $\bar\nu$ be the extension of the measure $\nu$ from $T$ to $\mathbb R^n$ by $\bar\nu(A)=\nu(A\cap T)$ for all $A\subset\mathbb R^n$. The \emph{no small isolated data clusters} condition holds if there exists $m_0>0$,  $\kappa>1$ and $\delta,\sigma>0$ so that for each truncated slab $Sl$, that is, each choice of $u\in\mathbb R^n$, $v_1,\ldots,v_k\in \R^n$, $s\in \R^n$, $t_1,\ldots, t_k\in \R$, there exists $\eps_0$ and for any $\eps<\eps_0$ then
		\begin{equation}\label{eq:no_small_isolated_data_cluster}
		\bar\nu(\kappa\,\eps Sl)\geq \min\Big\{\delta,\;\max\big\{m_0,\;(1+\sigma)\,\bar\nu(\eps\,Sl)\big\}-m_0\Big\}.
		\end{equation}
	\end{definition}
	
	We can now present our second stability theorem:
	\begin{theorem}\label{thm:stabilityB}
		Assume that the set $\{\delta X(s,\alpha(t_0)):s\in T\}$ satisfies condition B at $x_0=0$ as given by definition \eqref{def:cond_B} at some time $t_0$.
		Then for every $\varepsilon>0$ there exists a constant $C=C(K ,\kappa,\sigma)$ such that if
		\begin{equation}\label{eq:explicit_constants_2}
		m_0\leq \frac{\eps}{C},\quad \log \frac{1}{\eps}+C\leq \eta\leq I_0,\quad \delta_0\leq C\,\eps\,\eta^{\log(1+\sigma)/\log\kappa},
		\end{equation}
		  and if good and bad sets at $t=t_0$ satisfy
				\begin{equation}\label{good_set_big_and_bad_set_empty_B}
				\nu(G_\eta(t_0))>1-\delta_0\quad\text{and}\quad B_{-1}(t_0)=\emptyset,
				\end{equation}
				then for all $t\geq t_0$,
				\begin{equation}\label{eq:stability_B}
				\text{acc}(t)=\nu(G_0(t))\geq 1-\varepsilon.
				\end{equation}
				and
		\begin{equation}\label{eq:stability_generalized_2}
		\nu(G_{\eta^*}(t))\geq 1-2\,\log 2\,\eps\,e^{\eta^*},
		\end{equation}
		for all $\eta^\ast>0$.
	\end{theorem}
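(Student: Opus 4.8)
The plan is to reduce the whole statement, exactly as announced at the beginning of Section~\ref{sec:main_results}, to a single estimate at the initial time, namely $\bar L(t_0)\le\log 2\cdot\eps$, and then to read off both conclusions from the elementary bound \eqref{eq:good_set_and_loss}. Since training is assumed loss-nonincreasing, \eqref{eq:good_set_and_loss} holds at every $t\ge t_0$ with $\bar L(t_0)$ on the right: it gives $\nu(G_{\eta^\ast}(t))\ge 1-\bar L(t_0)/\log(1+e^{-\eta^\ast})$, and since $\log(1+e^{-\eta^\ast})\ge\tfrac12 e^{-\eta^\ast}$ for $\eta^\ast\ge 0$ this is $\ge 1-2\log 2\,\eps\,e^{\eta^\ast}$, which is \eqref{eq:stability_generalized_2}; the case $\eta^\ast=0$ reads $\acc(t)=\nu(G_0(t))\ge 1-\bar L(t_0)/\log 2\ge 1-\eps$, which is \eqref{eq:stability_B}. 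Thus everything comes down to proving $\bar L(t_0)\le\log 2\cdot\eps$ from the hypotheses \eqref{good_set_big_and_bad_set_empty_B}, \eqref{eq:explicit_constants_2} and condition B.

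For this I would start from the sharpened loss estimate \eqref{eq:first_loss_bound}. Because $B_{-1}(t_0)=\emptyset$, every $s\in G^C_\eta(t_0)$ has $-1<\delta X(s,\alpha(t_0))\le\eta$, so the first term $(K-1)e^{-\eta}$ of \eqref{eq:first_loss_bound} is at most $\tfrac13\log 2\cdot\eps$ as soon as $\eta\ge\log\tfrac1\eps+C$ with $C=C(K,\kappa,\sigma)$ large enough, and it remains to bound $\Sigma:=\sum_{s\in G^C_\eta(t_0)}\nu(s)\log\bigl(1+(K-1)e^{-\delta X(s,\alpha(t_0))}\bigr)$. The key input is a power-law anti-concentration estimate for the distribution of $\{\delta X(s,\alpha(t_0))\}$ extracted from condition B (this is the mechanism that forbids the small misclassified cluster of Example~\ref{ex:small_cluster}). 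Writing $F(x)=\nu(\{s\in T:\delta X(s,\alpha(t_0))<x\})$, the hypothesis $\nu(G_\eta(t_0))>1-\delta_0$ gives $F(\eta)<\delta_0$, while $\delta X>-1$ means $F$ is supported in $(-1,\infty)$. Now apply \eqref{eq:cond_B} to nested intervals around $x_0=0$ and iterate between a fixed scale and scale $\eta\le I_0$: each dilation by $\kappa$ multiplies mass by at least $1+\sigma$ (away from the cap, which is the harmless regime here since at scale $\eta$ we only need the mass bounded \emph{above} by $\delta_0$), so between scale $r$ and scale $\eta$ one gains a factor $(\eta/r)^{\gamma}$ with $\gamma:=\log(1+\sigma)/\log\kappa$, yielding $F(r)\lesssim(\max\{r,1\}/\eta)^{\gamma}\,\delta_0$ for $-1<r\le\eta$, up to an additive $m_0$-correction. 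Feeding this into $\Sigma=\int_{(-1,\eta]}\log(1+(K-1)e^{-x})\,dF(x)$ and integrating by parts against $\tfrac{(K-1)e^{-x}}{1+(K-1)e^{-x}}\le\min\{1,(K-1)e^{-x}\}$ produces $\Sigma\le C(K,\kappa,\sigma)\bigl(\eta^{-\gamma}\delta_0+m_0\bigr)$; the hypotheses $\delta_0\le C\eps\,\eta^{\gamma}$ and $m_0\le\eps/C$ then make $\Sigma\le\tfrac23\log 2\cdot\eps$, and adding the first term gives $\bar L(t_0)\le\log 2\cdot\eps$.

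The main obstacle is the middle step: turning the doubling inequality \eqref{eq:cond_B} into the clean bound $F(r)\lesssim(\max\{r,1\}/\eta)^\gamma\delta_0$. The inequality is one-sided, it saturates once the mass reaches the cap $\delta$ (one must check that this regime never obstructs the argument, which works because at the top scale we only want an upper bound $F(\eta)<\delta_0$), and it carries the discreteness mass $m_0$ that has to be propagated through the $O(\log\tfrac1\eps)$ dilations of the iteration without accumulating beyond the level permitted by $m_0\le\eps/C$ — this is precisely why the hypothesis is $m_0\le\eps/C$ rather than just $m_0\le\eps$. One also has to keep every interval used admissible for \eqref{eq:cond_B}, i.e., centered around $x_0=0$ and of width below $I_0$, which is what the requirement $\eta\le I_0$ guarantees, and to watch that the anti-concentration degrades for intervals reaching above $\delta X\approx\eta$ — but there the integrand $\log(1+(K-1)e^{-x})$ is already $\lesssim(K-1)e^{-\eta}$, so such contributions are negligible anyway. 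Tracking the exponents through this geometric iteration is exactly what produces the precise power $\eta^{\log(1+\sigma)/\log\kappa}$ in \eqref{eq:explicit_constants_2}. Once $\bar L(t_0)\le\log 2\cdot\eps$ is established, everything else — in particular the uniformity over all $t\ge t_0$ — follows directly from the elementary estimates of Section~\ref{sec:accuracy_and_loss}.
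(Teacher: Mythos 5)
Your proposal is correct and follows essentially the same route as the paper: the paper's proof first establishes (via Lemma~\ref{lem:loss_bound_2}) exactly the kind of anti-concentration bound you describe — iterating the doubling inequality to get $\nu(\{s:\delta X(s,\alpha(t_0))\in I\})\lesssim m_0+\delta_0(\sigma+1)(|I|/\eta)^{\gamma}$ with $\gamma=\log(1+\sigma)/\log\kappa$, then summing the loss over the geometric decomposition $I_{-1}=[-1,1)$, $I_i=[\kappa^i,\kappa^{i+1})$ instead of your integration by parts (equivalent bookkeeping) — and the paper then concludes both \eqref{eq:stability_B} and \eqref{eq:stability_generalized_2} from the Chebyshev-type inequality \eqref{chebyshev} together with loss monotonicity, exactly as you do. You have also correctly flagged all the subtleties the paper has to manage (the $m_0$ accumulation, the $\eta\le I_0$ admissibility constraint, the harmlessness of the $\delta$-cap, and the tail above $\eta$ being absorbed into $(K-1)e^{-\eta}$).
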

	{The following theorem guarantees that if the training set $T$ satisfies the no small data clusters condition, then the set
	$\{\delta X(s,\alpha(t_0)):s\in T\}$ satisfies condition B, no matter what what the values of the parameters $\alpha(t_0)$ are. Theorem \ref{thm:data_clusters_to_confidences} is useful because it allows us to establish stability based solely on training set $T$, without considering training. Theorems \ref{thm:stabilityA} and \ref{thm:stabilityB} apply to general classifiers, unlike the following Theorem \ref{thm:data_clusters_to_confidences} which only applies to classifiers consisting of a DNN with softmax as its last layer.}
	\begin{theorem}\label{thm:data_clusters_to_confidences}
		Let $T\subset\mathbb R^n$ be a training set for a DNN with weights $\nu(s)$ and whose activation functions are the absolute value function. For all $k$, $\kappa>1$, $\delta>0$ and $\sigma>0$, there exists $\kappa'>1$, $\delta$, and $\sigma'>0$ so that if the no small isolated data clusters condition \eqref{eq:no_small_isolated_data_cluster} holds on $T$ with constants $k'$, $\kappa'$, $\delta$, and $\sigma'$, then condition B given by \eqref{eq:cond_B} holds on $\{\delta X(s,\alpha(t_0)):s\in T\}$ with constants $\kappa$, $\delta$, and $\sigma$ for any $\alpha(t_0)$.
	\end{theorem}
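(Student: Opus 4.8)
The plan is to bridge the one–dimensional condition B on $\{\delta X(s,\alpha(t_0)):s\in T\}$ and the $n$-dimensional NSIDC condition on $T$ by exploiting that $\delta X(\cdot,\alpha(t_0))$, restricted to each class, is a continuous piecewise-affine function of $s\in\mathbb R^n$, so that the preimage of an interval is a finite union of truncated slabs of the form \eqref{truncatedcylinder0}, to each of which the NSIDC inequality \eqref{eq:no_small_isolated_data_cluster} can be applied directly. Fix $\alpha=\alpha(t_0)$. For each class $i$, the function $g_i(s):=X_i(s,\alpha)-\max_{j\neq i}X_j(s,\alpha)$ is continuous and piecewise affine on $\mathbb R^n$: each layer is an affine map followed by the (piecewise-linear) absolute-value activation, compositions of piecewise-linear maps are piecewise-linear, and $\max$ is piecewise-linear. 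Hence there is a finite polyhedral subdivision $\mathbb R^n=\bigcup_\ell \overline{P_\ell^i}$ with pairwise disjoint interiors, on each cell of which $g_i(s)=a^i_\ell\cdot s+b^i_\ell$ is affine, and $\delta X(s,\alpha)=g_{i(s)}(s)$. A crucial piece of book-keeping, needed because the NSIDC constants must not depend on $\alpha(t_0)$, is that the number of cells and the number of facets describing each cell are bounded by a quantity depending only on the DNN architecture (depths, widths, $K$) and not on the parameter values; this fixes the constant $k'$ and feeds into the choice of $\kappa'$ and $\sigma'$.

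Next I would carry out the translation. For an interval $I$ around $0$,
$\{s\in T:\delta X(s,\alpha)\in I\}=\bigcup_i\bigcup_\ell\{s\in T\cap P^i_\ell:\ a^i_\ell\cdot s+b^i_\ell\in I\}$.
On a cell with $a^i_\ell\neq 0$, this set is exactly $\overline{P^i_\ell}$ intersected with a slab of width $|I|/|a^i_\ell|$ in direction $u=a^i_\ell/|a^i_\ell|$ and centred on the hyperplane $\{a^i_\ell\cdot s+b^i_\ell=c_I\}$, i.e.\ a truncated slab $\eps\,Sl$ in the sense of \eqref{truncatedcylinder0} with $\eps\propto|I|$, whose linear constraints $v_i\cdot x\leq t_i$ are the facets of $P^i_\ell$; moreover dilating $I$ to $\kappa I$ dilates precisely the width, producing the rescaled slab $\kappa\,\eps\,Sl$ of \eqref{dilatedcylinder0}. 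Applying \eqref{eq:no_small_isolated_data_cluster} cell by cell — for $|I|$ below the uniform threshold obtained from the finitely many slab shapes, which supplies the $I_0$ of condition B — and summing over cells and over the $K$ classes (assigning each point of $T$ on a shared facet to a single cell), the non-flat cells contribute at least $(1+\sigma')\,\bar\nu(\cdot)$ to $\nu\bigl(\{s\in T:\delta X(s,\alpha)\in\kappa I\}\bigr)$, up to the additive $m_0$ terms.

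The remaining and main obstacle is the \textbf{flat cells}, those with $a^i_\ell=0$: there $g_i\equiv b^i_\ell$ is constant, so $\{s\in P^i_\ell:g_i(s)\in I\}$ is either empty or all of $\overline{P^i_\ell}$, and in the latter case dilating $I$ to $\kappa I$ yields no new mass — precisely the phenomenon of a flat portion of the data being collapsed to (nearly) a point by a degenerate linear map (cf.\ Remark~\ref{rmk:propagation_of_condition_B}). I would dispatch this in two regimes. If $\bar\nu$ of the whole preimage $\{s\in T:\delta X(s,\alpha)\in I\}$ is at least $\delta$, then condition B holds for free, since $\{\delta X\in\kappa I\}\supseteq\{\delta X\in I\}$ and the right-hand side of \eqref{eq:cond_B} is capped by $\delta$. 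If it is smaller, then every flat cell contributing to the preimage has $|b^i_\ell|<|I|$ and carries total mass $<\delta$, so each such cell is a genuinely \emph{small} cluster of $T$ sitting near the decision surface; here I would use that such a small flat cell, after rescaling below the threshold $\eps_0$ (the ``a sine curve looks flat only from far away'' point), is thin in at least one direction and can itself be enclosed in a truncated slab to which \eqref{eq:no_small_isolated_data_cluster} applies, or else is dominated by the doubling gain of the adjacent non-flat cells. This is where the small-scale hypothesis $\eps<\eps_0$ and the slack between $(\kappa',\sigma')$ and $(\kappa,\sigma)$, as well as the $m_0$ slack between the two definitions, are consumed.

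Assembling the doubling gain from the non-flat cells with the flat-cell control and absorbing the $m_0$ terms then yields \eqref{eq:cond_B} at $x_0=0$ with the prescribed target constants $\kappa,\delta,\sigma$, for arbitrary $\alpha(t_0)$. I expect the two places demanding real care to be (i) the flat-cell analysis just described, and (ii) making the combinatorial complexity of the polyhedral subdivision — hence all of $k'$, $\kappa'$, $\sigma'$, and the uniform $I_0$ — depend on the architecture alone and not on the particular parameter values $\alpha(t_0)$.
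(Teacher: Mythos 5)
Your proposal takes a genuinely different route from the paper's. The paper obtains Theorem~\ref{thm:data_clusters_to_confidences} as a corollary of Theorem~\ref{thm:propogation_of_cond_B}, which is a \emph{forward pushforward} result proved one layer at a time: if a measure $\mu$ satisfies the doubling condition \eqref{doublingstrip} on truncated ellipsoidal cylinders of rank at most $r$, then so does $L_\#\mu$ for each layer map $L$. The induction rests on two facts (Propositions~\ref{propdoublingstrip} and~\ref{propabsolute}): the preimage of $\eps\,E(Q,s,v_1,t_1,\ldots)$ under an affine map $M$ is again a truncated ellipsoidal cylinder $\eps\,E(M^TQM,s',M^Tv_1,t_1,\ldots)$, so NSIDC pulls back with the same constants; and the preimage of such a cylinder under the componentwise absolute value is exactly two cylinders (one per half-space), so NSIDC pushes forward at the cost $k\mapsto k-1$, $m_0\mapsto 2m_0$, $\sigma\mapsto\sigma/2$. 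You instead freeze $\alpha(t_0)$, decompose $\mathbb R^n$ once into polyhedral cells on which the entire composition $\delta X(\cdot,\alpha)$ is affine, and pull intervals back to truncated slabs cell by cell. That is a legitimate alternative starting point, and your framing makes the role of flat cells very transparent, but the layer-at-a-time structure buys the paper something concrete: rank degeneracy of intermediate linear maps is absorbed by letting $\mathrm{rank}\,Q$ drop along the chain inside the single, uniform condition \eqref{doublingstrip}, and the combinatorics of the global polyhedral subdivision (which you wave at as ``bounded by architecture'') never has to be confronted all at once.

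The gap is exactly where you flagged it, in the flat cells, and I do not think your dispatch closes it. Take a flat cell $P^i_\ell$ with $a^i_\ell=0$ and $b^i_\ell=0$ carrying mass $a=\bar\nu(T\cap P^i_\ell)$ with $m_0/\sigma<a<\delta$, and suppose every other $\delta X$-value lies away from $0$. Then for all small $I\ni 0$ one has $\nu(\{\delta X\in I\})=\nu(\{\delta X\in\kappa I\})=a$, so \eqref{eq:cond_B} fails, and your regime~2 must exclude this configuration from NSIDC alone. Enclosing $P^i_\ell$ in a slab and invoking \eqref{eq:no_small_isolated_data_cluster} gives extra mass in the \emph{dilated slab in $\mathbb R^n$}, but that extra mass lives outside $P^i_\ell$, where $\delta X$ is governed by a different affine formula; nothing forces it to land in $\kappa I$, so the doubling gain on the data manifold does not become a doubling gain for the $\delta X$ distribution. ``Dominated by the doubling gain of the adjacent non-flat cells'' is likewise not a bound: that gain scales with the non-flat cells' mass, which may be negligible next to $a$. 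The paper sidesteps this precisely because it never performs a one-shot pullback: inverting a layer whose linear part kills $Q$ simply produces a lower-rank $Q'=M^TQM$ on the previous layer, which is still a legal instance of \eqref{doublingstrip}. To repair your argument along its current lines you would need the NSIDC hypothesis extended to degenerate (rank-$0$) ``cylinders,'' i.e.\ to pure intersections of half-spaces, which is a strictly stronger assumption than the slab-only condition \eqref{eq:no_small_isolated_data_cluster} you are working from, and that gap is not addressed in the proposal.
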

The proof of Theorem \ref{thm:data_clusters_to_confidences} actually propagates a more general condition on the layers that is worth stating. Instead of truncated slabs, it applies to more general truncated ellipsoidal cylinders, namely
\begin{equation}
E=E(Q,s,v_1,t_1,\ldots,v_k,t_k)=\left\{x,\ (x-s)^T\,Q\,(x-s)\leq 1\ \mbox{and}\ v_i\cdot x\leq t_i\ \forall i=1,\ldots, k\right\},\label{truncatedcylinder1}
\end{equation}
for some symmetric {positive semi-definite} matrix $Q$. The dilated cylinders by some factor $\kappa$ are obtained with
        \begin{equation}
\kappa\,E=\left\{x,\ (x-s)^T\,Q\,(x-s)\leq \kappa^2\ \mbox{and}\ v_i\cdot x\leq t_i\ \forall i=1,\ldots, k\right\}.\label{dilatedcylinder1}
        \end{equation}
        Of course if $Q$ has rank $1$ then the definitions \eqref{truncatedcylinder1}-\eqref{dilatedcylinder1} exactly correspond to \eqref{truncatedcylinder0}-\eqref{dilatedcylinder0}.

In this more general context, the extended no small data clusters condition on any measure $\mu$ reads for given $r$, $k$, $m_0$, $\kappa$, $\sigma$ and $\delta$, 
        \begin{equation}
          \begin{split}
            &\forall Q\in M_d(\R^d)\ \mbox{with}\ \mbox{rank}\,Q\leq r,\; \forall v_1,\ldots,v_k\in \R^d\setminus\{0\},\;\forall s\in \R^d,\;\forall t_1,\ldots,t_k\in \R,\ \exists \eps_0\ \mbox{s.t.}\ \forall \eps\leq \eps_0,\\
            &\qquad \mu\left(\kappa\,\eps E(u,s,v_1,t_1,\ldots,v_k,t_k)\right)\geq\min\Big\{\delta,\; \max\big\{m_0,\ (1+\sigma)\,\mu\left(\eps\,E(u,s,v_1,t_1,\ldots,v_k,t_k)\right)\big\}-m_0\Big\}.
\end{split}
\label{doublingstrip1}
        \end{equation}
        The key role played by truncated ellipsoids is worth noting as they satisfy two important conditions:

	\begin{itemize}
		\item The image of a truncated ellipsoid by any linear map is another truncated ellipsoid;
		\item The inverse image of a truncated ellipsoid by the absolute value non-linear map consists exactly of two other truncated ellipsoids; and
		\item It is those properties that allow the propagation of the no small data clusters condition through the layers.
	\end{itemize}
Theorem \ref{thm:propogation_of_cond_B} applies to DNNs using absolute value as their activation function because (i) the absolute value function is finite-to-one unlike e.g., ReLU, and (ii) the absolute value function is piecewise linear unlike e.g., sigmoid. Property (i) is important because an infinite-to-one function will map large portions of the training set to a point, resulting in a small cluster. It is reasonable to conjecture that Theorem \ref{thm:data_clusters_to_confidences} extends to allow any finite-to-one piecewise linear activation function. It is even likely that a more restrictive version of the no-small-isolated data clusters condition will allow ReLU to be used in \ref{thm:data_clusters_to_confidences}. This is the subject of future work. Additionally (ii) is helpful because having piecewise linear activations means that each map $f_k$ (from \eqref{eq:dnn_logit}) passing from one DNN layer to the next is also piecewise linear, so propagating the condition described by \eqref{eq:no_small_isolated_data_cluster} through the DNN is more tractable than if each layer were fully nonlinear.
\subsubsection{Examples of applications of Condition A and B}\label{sec:application_of_conditions_to_example}

\textit{Revisiting example \ref{ex:small_cluster}.} 1Conditions $A$ and $B$ and their associated Theorems \ref{thm:stabilityA} and \ref{thm:stabilityB} guarantee stability, so why does stability fail in Example \ref{ex:small_cluster}? The answer lies in the constants found in conditions A and B. Condition $A$ is satisfied relative to constants $m_0$, $\Lambda $, $\phi$, and $\psi$, while condition B is satisfied relative to constants $\kappa$, $\sigma$, $\delta$, and $m_0$. We will determine for which constants conditions $A$ and $B$ are satisfied. For both conditions, we will consider a typical $m_0$ value of $m_0=0.003$.

Theorem \ref{thm:stabilityA} requires $\psi=1$. Taking $x_1=\beta(t_0)$, we find via brute force calculation that for the $\delta X$ values given in example $\Lambda $ must exceed $18.0$.  In the proof of Theorem \ref{thm:stabilityA}, we will see that $\delta $ and $\eta$ must be chosen so that $3\Lambda \bar L(t_0)<1$ However, in Example \ref{ex:small_cluster}, 
\begin{equation*}
3\Lambda \bar L(t_0)\geq 3\cdot 18\cdot 0.1845>1.
\end{equation*}
Therefore, though Theorem \ref{thm:stabilityA} guarantees the existence of $\delta$ small enough and $\eta$ large enough to get stability, such $\delta$ and $\eta$ for Example \ref{ex:small_cluster} will not satisfy the primary hypothesis of the theorem, that is $\nu(G_\eta(t_0))>1-\delta$.

Theorem \ref{thm:stabilityB} requires $\kappa=2$. If $I=[-1,-0.2]$, then $\nu(\{s\in T:\delta X(s,\alpha(t_0))\in I\})=\nu(\{s\in T:\delta X(s,\alpha(t_0))\in \kappa I\})=0.047$, so either $\sigma=0$, which is not allowed in condition B, or $\delta<.047$. But if $\delta<0.047$, then since $\nu(G_0(t_0))=0.95$, it is impossible to obtain $\nu(G_\eta(t_0))>1-\delta$ for any $\eta>0$.

Since Example \ref{ex:small_cluster} can only satisfy conditions A and B with constants that are either too large or too small, we cannot apply the stability theorems to it.

\medskip

To see how Theorems \ref{thm:stabilityA} and \ref{thm:stabilityB} can be applied, consider the following example.
\begin{example}
	Suppose a classifier properly classifies all elements in its training set at $t_0$. In fact, for some large $\eta$, $\nu(G_\eta(t_0)=1$. How large high will accuracy be at later times?
	
	First suppose that $T$ is a two-class training set for a classifier which satisfies the condition A for all $t\geq t_0$ for some constants $\Lambda\geq 1$, $\psi=1$, $0<\phi<1$ and $m_0=0$. Theorem \ref{thm:stabilityA} tells us that given $\varepsilon>0$, if $\eta$ is large enough, then 
	\begin{equation*}
	\acc(t)>1-\varepsilon
	\end{equation*}
	for all $t>t_0$. But how small can $\varepsilon$ be? Remark \ref{rmk:explicit_constants_1} gives a relationship between $\eta$ and $\epsilon$, and in particular, provided $\eta$ is sufficiently large, we may choose
	\begin{equation*}
	\varepsilon=\left(\frac{10}{\log 2}\right)^\phi\frac{3\Lambda }{e^{\phi\sqrt{\eta}}}.
	\end{equation*}
	Since $\eta$ is large, $\varepsilon$ is small. Therefore, at all later times, we are guaranteed high accuracy. Additionally, good sets also remain large. For example, by choosing 
	\begin{equation*}
	\eta^\ast=\frac{\log(3/4)}{2\log\epsilon}(\eta-\log(3\Lambda))
	\end{equation*}
	A simple computation with \eqref{eq:stability_1_generalized} using $\bar L(t_0)\leq e^{-\eta}$ shows that
	\begin{equation*}
	\nu(G_{\eta^\ast}(t))>1-2\varepsilon.
	\end{equation*}
	for all $t>t_0$. Therefore, $G_{\eta^\ast}(t)$ set remains large for all $t$, the price paid being that $\eta^\ast<\eta$. If $\eta$ is very large, we can in fact note that $\eta^\ast\sim \sqrt{\eta}$.

        Alternatively, we may choose
	\begin{equation*}
	\eta^\ast=\frac{\log{3/4}}{2\log 2}(\eta-\log(3\Lambda))
	\end{equation*}
	gives
	\begin{equation*}
	\nu(G_{\eta^\ast}(t))\geq\frac{1}{2}-\varepsilon,
	\end{equation*}
	meaning the median of the distribution of $\delta X$ values is greater than $\eta^\ast$ with now $\eta^\ast\sim\eta$.
	
	Similarly, if condition B is satisfied at $t=t_0$ for some constants $\delta$, $\kappa$, $\sigma$ and $m_0$, instead of condition A, then we can apply theorem \ref{thm:stabilityB}. Using \eqref{eq:explicit_constants_2}, we conclude that letting 
	\begin{equation*}
	\varepsilon=\max\{m_0C,e^{C-\eta}\}
	\end{equation*}
	we have
	\begin{equation*}
	\acc(t)>1-\varepsilon.
	\end{equation*}
	Since $m_0$ is small and $\eta$ is large, $\varepsilon$ is also small, so accuracy remains high. By letting $\eta^\ast=\log(2/\log 2)$, \eqref{eq:stability_generalized_2}, we have
	\begin{equation*}
	\nu(G_{\eta^\ast}(t))>1-2\varepsilon,
	\end{equation*}
	so this good set also remains large but with a significantly worse $\eta^\ast$ than for condition A.

        Finally, if $\eta^\ast=-\log(4\varepsilon\log 2)$, then
	\begin{equation*}
	\nu(G_{\eta^\ast}(t))>1/2,
	\end{equation*}
	so again the median of the $\delta X$ distribution is greater than $\eta^\ast$. Nevertheless, here we still have that $\eta^\ast\sim\eta$. 
\end{example}

\subsection{{How to verify condition A and the no-small-isolated data clusters condition for a given dataset}}\label{sec:in_practice}

In this section, we discuss how to verify conditions A and the no-small-isolated data clusters condition to ensure stability of training algorithms in a real-world setting. For completeness, we review notations:
\begin{itemize}
	\item We consider classifiers of the form $\phi(\cdot,\alpha(t))=\rho\circ X(\cdot,\alpha(t))$ where $\rho$ is the softmax function and $X(\cdot,\alpha(t)):\mathbb R^n\to\mathbb R^K$ depends on parameters $\alpha(t)$ where $t$ is the present iteration of the training process.
	\item $T$ is a training set for the classifier containing objects $s$. For each $s$, $i(s)$ is the index of the correct class of $s$.
	\item Each object $s$ has a positive weight $\nu(s)$ with $\sum_{s\in T}\nu(s)=1$.
	\item $\delta X(s,\alpha(t))=X_{i(s)}(s,\alpha(t))-\max_{j\neq i(s)}X_j(s,\alpha(t))$. 
\end{itemize}
Now we will explain how to verify condition A and the NSIDC condition, and how to use them to guarantee stability.
\begin{itemize}
	\item \textbf{Condition A.} Train the classifier until a time $t_0$ when a reasonable degree of accuracy is achieved. Calculate the values of $\delta X(s,\alpha(t_0))$ for each $s\in T$. One way to do this is to choose the typical values $\psi=1$, $\phi=1/2$, and $m_0\approx0.001$ so that the only constant to solve for in \eqref{eq:cond_A} is $\Lambda$. To this end, make the observation:
	\begin{equation}\label{eq:finding_Lambda}
	\begin{array}{cc}\text{minimal $\Lambda$ such that}\\ \text{\eqref{eq:cond_A} is satisfied}\end{array}=\max_{x_1\in\{\beta,0\},x_2>x_1}\frac{\nu\left(\left\{s\in T:\delta X(s,\alpha(t_0))<\frac{x_1+x_2}{2}\right\}\right)-m_0}{\nu\left(\left\{s\in T:\delta X(s,\alpha(t_0))<x_1\right\}\right)^\phi+\nu\left(\left\{s\in T:\delta X(s,\alpha(t_0))<x_2\right\}\right)^\psi}.
	\end{equation}
	Therefore, finding the optimal $\Lambda$ is a matter of solving a maximization problem. For fixed $x_1$, the right hand side of \eqref{eq:finding_Lambda} is a piecewise constant function with discontinuities at $\delta X(s,\alpha(t_0))$ for $s\in T$. Thus, the maximization problem is solved by sampling the right hand side of \eqref{eq:finding_Lambda} at $x_1=0$ and $x_1=\beta$, and $x_2=\delta X(s,\alpha(t_0))$ for all $s\in T$ and then finding the maximum of the resulting list of samples.
	
	With condition A satisfied for some known constants, one may apply Theorem \ref{thm:stabilityA}. However, as seen with Example \ref{ex:small_cluster}, if $\Lambda$ is too large, Theorem \ref{thm:stabilityA} may still require $G_\eta(t_0)$ larger than it actually is. It may be possible to decrease $\Lambda$ by repeating the maximization process with smaller $\phi$ or larger $m_0$. If an acceptable $\Lambda$ is found, \ref{thm:stabilityA} guarantees stability. If not, one may need to train longer to find an acceptable $\Lambda$.

	\item \textbf{NSIDC condition.} Assume that the classifier is a DNN with the absolute value function as its activation function. By verifying the no-small-isolated data clusters condition, we are guaranteed that condition B holds for all time. Therefore, we need only verify once that the no-small-isolated data clusters condition holds, which is its principal advantage over condition A. To verify this condition, choose constants $\delta=0.01$, $\kappa'=2$, and $m_0\approx 0.001$. Let $P$ be the set of all {truncated ellipsoidal cylinders} $E$ in $\mathbb R^n$ such that $\nu(\kappa\,E\cap T)\leq\delta$ and $\nu(E\cap T)>m_0$. We are left with finding the constant $\sigma'$ which satisfies \eqref{eq:no_small_isolated_data_cluster} for all {truncated ellipsoidal cylinders} in $P$:
	\begin{equation}\label{eq:finding_sigma_prime}
	\text{maximal $\sigma'$ such that \eqref{eq:no_small_isolated_data_cluster} is satisfied}=\min_{E\in P}\frac{\nu(\kappa E\cap T)}{\nu(E\cap T)}-1.
	\end{equation}
	As in verifying condition A, we will solve this minimization problem by discretizing the domain of minimization, $P$, and sampling the objective function only on that discretization. It should be noted that the dimension of $P$ is {of order $n^2$}. Since $n$ (the dimension of the space containing $T$) is typically high, this means that a sufficiently fine discretization is necessarily quite large.
	
	After finding $\sigma'$, we may be sure that condition B is satisfied at every iteration of the training algorithm for known constants. Therefore, we may apply Theorem \ref{thm:stabilityB} to guarantee stability.
\end{itemize}

\section{Proof of Theorems \ref{thm:stabilityA}, \ref{thm:stabilityB} and \ref{thm:data_clusters_to_confidences}}\label{sec:proofs}

\subsection{Elementary estimates}\label{sec:elementary_estimates}

Here, we will show the details and derivations of many several simple equations, inequalities, and some technical lemmas.

\smallskip

\textbf{Estimates for loss.} As mentioned in Section \ref{sec:accuracy_and_loss}, the quantity $\delta X(s,\alpha)$ facilitates convenient estimates for loss. To start, \eqref{eq:classifier_with_softmax} gives
\begin{equation}\label{eq:transforming_softmax}
p_{i(s)}(s,\alpha)=\frac{e^{X_{i(s)}(s,\alpha)}}{\sum_{k=1}^K e^{X_k(s,\alpha)}}=\frac{1}{\sum_{k=1}^K e^{X_k(s,\alpha)-X_{i(s)}(s,\alpha)}}=\frac{1}{1+\sum_{k\neq i(s)} e^{X_k(s,\alpha)-X_{i(s)}(s,\alpha)}}.
\end{equation}
For each $k\neq i(s)$, $X_k(s,\alpha)-X_{i(s)}(s,\alpha)\leq\max_{k\neq i(s)} X_k(s,\alpha)- X_{i(s)}(s,\alpha)=-\delta X(s,\alpha)$. Using \eqref{eq:defdeltaX} and \eqref{eq:transforming_softmax}, we obtain estimates on $p_{i(s)}(s,\alpha)$:
\begin{equation}\label{eq:probability_estimates}
\frac{1}{1+(K-1)e^{-\delta X(s,\alpha)}}\leq p_{i(s)}(s,\alpha)\leq\frac{1}{1+e^{-\delta X(s,\alpha)}}.
\end{equation} 
Finally, \eqref{eq:probability_estimates} gives estimates on loss:    
\begin{equation}\label{eq:loss_estimates}
\sum_{s\in T}\nu(s)\log\left(1+e^{-\delta X(s,\alpha)}\right)\leq\bar L(\alpha)\leq\sum_{s\in T}\nu(s)\log\left(1+(K-1)e^{-\delta X(s,\alpha)}\right).
\end{equation}
In particular, if there are only two classes, the inequalities in \eqref{eq:probability_estimates} and \eqref{eq:loss_estimates} are equalities.

\smallskip

\textbf{Derivations of \eqref{eq:good_set_and_loss} and \eqref{eq:loss_and_accuracy}.} Equations \eqref{eq:good_set_and_loss} and \eqref{eq:loss_and_accuracy} show how low loss leads to high accuracy. Starting from the lower bound for loss in \eqref{eq:loss_estimates}, we make the following estimates for any $t$:
\begin{align}\nonumber
\bar L(t)&\geq\sum_{s\in T}\nu(s)\log(1+e^{-\delta X(s,\alpha(t))})\\ \nonumber
&\geq\sum_{s\in G_\eta^C(t)}\nu(s)\log(1+e^{-\delta X(s,\alpha(t))})\\ \nonumber
&\geq\log(1+e^{-\eta})\sum_{s\in G_\eta^C(t)}\nu(s)\\ \nonumber
&=\nu(G_\eta^C(t))\log(1+e^{-\eta})\\
&=(1-\nu(G_\eta(t)))\log(1+e^{-\eta}). \label{eq:intermediate_good_set_estiamte}
\end{align}
Observe that $\log(1+x)\geq x/2$ for $0\leq x\leq 1$. It follows that
\begin{equation*}
\nu(G_\eta(t))\geq 1-2e^\eta\bar L(t).
\end{equation*}
With the assumption that loss is decreasing, and $t\geq t_0$, we conclude that
\begin{equation*}
\nu(G_\eta(t))\geq 1-2e^\eta\bar L(t)\geq 1-2e^\eta\bar L(t_0).
\end{equation*}
On the other hand, we can obtain an improved estimate for $\nu(\acc(t))$ by applying \eqref{eq:intermediate_good_set_estiamte} with $\eta=0$:
\begin{equation*}
\nu(\acc(t))\geq 1-\frac{\bar L(t)}{\log 2}\geq 1-\frac{\bar L(t_0)}{\log 2}.
\end{equation*}

\smallskip

\textbf{Derivation of \eqref{eq:first_loss_bound}.} Equation \eqref{eq:first_loss_bound} shows that when $G_\eta(t)$ is large, the sum \eqref{eq:loss} is dominated by a few terms that correspond to poorly classified objects. To derive \eqref{eq:first_loss_bound}, start from the upper bound in \eqref{eq:loss_estimates}, and then make the following series of estimates:
\begin{align*}
\bar L(t)&=\sum_{s\in T}\nu(s)\log\left(1+(K-1)e^{-\delta X(s,\alpha(t))}\right)\\
&=\sum_{s\in G_\eta(t)}\nu(s)\log\left(1+(K-1)e^{-\delta X(s,\alpha(t))}\right)+\sum_{s\in G_\eta^C(t)}\nu(s)\log\left(1+(K-1)e^-{\delta X(s,\alpha(t))}\right)\\
&\leq\log\left(1+(K-1)e^{-\eta}\right)\sum_{s\in G_\eta(t)}\nu(s)+\sum_{s\in G_\eta^C(t)}\nu(s)\log\left(1+(K-1)e^{-\delta X(s,\alpha(t))}\right)\\
&\leq(K-1)e^{-\eta}\nu(G_\eta(t))+\sum_{s\in G_\eta^C(t)}\nu(s)\log\left(1+(K-1)e^{-\delta X(s,\alpha(t))}\right)\\
&\leq(K-1)e^{-\eta}+\sum_{s\in G_\eta^C(t)}\nu(s)\log\left(1+(K-1)e^{-\delta X(s,\alpha(t))}\right).
\end{align*}

The following two technical lemmas will be used in later proofs.

\begin{lemma}\label{lem:double_exponential_sum_constant}
	Suppose $0<\Lambda \delta<1/2$. The inequality
	$$\sum_{k=0}^\infty e^{2^k\log(\Lambda \delta)-\frac{1+\eta}{2^{k+1}}}\leq C\,e^{-\sqrt{-2\log( \Lambda \delta)(1+\eta)}}$$
	can always be satisfied for some $C=C(\Lambda ,\varepsilon,\eta)\leq 13/4$	
\end{lemma}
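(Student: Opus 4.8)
The plan is to re‑center each summand at the minimizer of its exponent. Writing $a:=-\log(\Lambda\delta)>\log 2>0$ and $b:=1+\eta\ge 1$, the $k$‑th term equals $e^{-\phi(k)}$ with $\phi(k)=2^k a+\tfrac12 b\,2^{-k}$. Completing the square gives the exact identity $\phi(k)=\bigl(\sqrt a\,2^{k/2}-\sqrt{b/2}\,2^{-k/2}\bigr)^2+\sqrt{2ab}$, so with $A:=\sqrt{2ab}=\sqrt{-2\log(\Lambda\delta)(1+\eta)}$ the series factors as $\sum_{k\ge 0}e^{-\phi(k)}=e^{-A}\sum_{k\ge 0}e^{-r_k^2}$, where $r_k:=\sqrt a\,2^{k/2}-\sqrt{b/2}\,2^{-k/2}$. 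Thus the decaying factor $e^{-A}$ on the right‑hand side of the claim appears for free, and it remains to prove $\sum_{k\ge 0}e^{-r_k^2}\le \tfrac{13}{4}$; in fact one obtains a universal bound below $3.08$, so the dependence of $C$ on $\Lambda,\varepsilon,\eta$ is unnecessary.

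To estimate $\sum_k e^{-r_k^2}$ I would bring in $k^*:=\tfrac12\log_2\tfrac{b}{2a}$, the minimizer of the convex extension of $\phi$ to $\mathbb R$, and record two facts that make $k^*$ useful: the identity $r_k^2=\phi(k)-A=A\bigl(\cosh((k-k^*)\ln 2)-1\bigr)=2A\sinh^2\!\bigl(\tfrac12(k-k^*)\ln 2\bigr)$, and the coupling $A=2a\,2^{k^*}$ linking the size of $A$ to the position of $k^*$. Then one argues by cases on the sign of $k^*$. If $k^*\ge 0$, the coupling together with $a>\log 2$ forces $A\ge 2\log 2$; then $\cosh y-1\ge y^2/2$ yields the Gaussian lower bound $r_k^2\ge (\ln 2)^3(k-k^*)^2$, and enlarging the sum to all of $\mathbb Z$ and applying the Poisson summation formula (which makes $\sum_{k\in\mathbb Z}e^{-\mu(k-x)^2}$ maximal at integer $x$, for any $\mu>0$) bounds the sum by $1+2\sum_{k\ge 1}e^{-(\ln 2)^3 k^2}$, a rapidly convergent numerical series whose value is $\approx 3.07$.

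The one genuinely delicate case is $k^*<0$. Here $A$ can sit close to its floor $\sqrt{2\log 2}$ while the Gaussian coefficient $\mu=(\ln 2)^2A/2$ is correspondingly small, and the pure quadratic relaxation of $\cosh$ degenerates ($\sum_k e^{-\mu k^2}\to\infty$ as $\mu\to 0$); the remedy is to keep the super‑exponential growth of $\sinh$. Using $b\ge 1$, the coupling sharpens to $A\ge 2^{|k^*|}$, and for $k\ge 1$ one has $\tfrac12(k-k^*)\ln 2\ge\tfrac12\ln 2$, so the elementary bound $\sinh x\ge\tfrac14 e^{x}$ (valid for $x\ge\tfrac12\ln 2$) gives $r_k^2\ge\tfrac A8\,2^{k+|k^*|}\ge\tfrac18\,2^{k}$. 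Bounding the single remaining term by $e^{-r_0^2}\le 1$, the sum is at most $1+\sum_{k\ge 1}e^{-2^k/8}\approx 2.91$. Combining the two cases gives $\sum_{k\ge 0}e^{-r_k^2}<3.08\le\tfrac{13}{4}$ uniformly, which is the assertion.

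I expect the main obstacle to be precisely the $k^*<0$ regime: one must recognize that the convenient Gaussian/Poisson machinery used elsewhere is too lossy when the lattice points $k-k^*$ are all large, and instead exploit the exponential tail of $\sinh$ together with the strengthened coupling $A\ge 2^{|k^*|}$ — which is exactly where the hypotheses $\Lambda\delta<1/2$ (giving $a>\log 2$) and $\eta\ge 0$ (giving $b\ge 1$) are spent. A secondary, purely arithmetic point is that in the $k^*\ge 0$ case the clean estimate lands near $3.07$, close enough to $13/4=3.25$ that one should sum a few terms of the Gaussian series explicitly rather than overestimate it by something coarser such as a geometric series.
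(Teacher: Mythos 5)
Your proof is correct, and it follows a genuinely different route from the paper's after the opening move. Both proofs begin with the same completion of the square (the paper's substitution $y=\sqrt{-\log(\Lambda\delta)}$, $z=y^{-1}\sqrt{(\eta+1)/2}$ is exactly your reparametrization, with $y^2=a$ and $z=2^{k^*}$, and the paper's exponent $-2^{-k}(z-2^k)^2y^2$ is your $-r_k^2$). From there the paper proves $\partial_y h(y,z)<0$ and maximizes on the boundary of the feasible $(y,z)$-region, which splits into a piece where $y=\sqrt{\log 2}$ is pinned (further subdivided into $z<2$ and $z\geq 2$) and a piece where $z=1/(y\sqrt 2)$ is pinned; it then estimates each sub-case using tangent-line lower bounds for the convex function $(2^k-z)^2$ and geometric series, landing at $C_0\leq 3.25$. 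You instead keep the hyperbolic structure explicit via $r_k^2=2A\sinh^2\!\bigl(\tfrac12(k-k^*)\ln 2\bigr)$ and the coupling $A=2a\,2^{k^*}$, and split on $\mathrm{sign}(k^*)$: a Gaussian relaxation plus the theta-function maximality fact (Poisson summation) when $k^*\geq 0$, and the $\sinh x\geq\tfrac14 e^x$ tail bound when $k^*<0$. Two points of comparison are worth recording. First, your route yields a slightly sharper numerical constant ($<3.08$ versus the paper's $3.25$), essentially because Poisson summation gives the optimal lattice-shift bound whereas the paper uses coarser tangent-line and geometric-series estimates. Second, the paper's proof invokes only elementary inequalities and avoids Poisson summation entirely — arguably a cleaner fit for an appendix lemma — whereas you import a nontrivial harmonic-analysis fact to save a step. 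Also note that the paper's computation implicitly assumes $\eta>1$ (it writes this when deriving $z>1/(y\sqrt 2)$) while your case split only needs $\eta\geq 0$, so your argument is marginally more uniform in $\eta$; the lemma statement is silent on the range of $\eta$, so neither proof is at fault, but yours covers a slightly larger range.
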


The proof of Lemma \ref{lem:double_exponential_sum_constant} is essentially a long series of elementary estimates which are not very enlightening. Consequently, it is relegated to the appendix.

\begin{lemma}\label{lem:double_exponential_sum_constant_2}
	For any $p>0$ and any $\kappa\geq 2$,
	\begin{equation}\label{eq:double_exponential_sum_2}
	\sum_{0}^{p-1}\kappa^{i}\,e^{-\kappa^i}\leq 2.
	\end{equation}
\end{lemma}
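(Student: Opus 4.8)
The plan is to exploit the unimodality of the function $g(x)=x\,e^{-x}$, which increases on $[0,1]$ and decreases on $[1,\infty)$, and then to compare the sum termwise against a geometric series whose total is exactly $2$. This will simultaneously prove the bound and explain why the constant $2$ is the natural one.

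First I would reduce the claim to the case $\kappa=2$. Since $\kappa\geq 2$, for every integer $i\geq 0$ we have $\kappa^i\geq 2^i\geq 1$, so both $\kappa^i$ and $2^i$ lie in the region where $g$ is nonincreasing; hence $\kappa^i e^{-\kappa^i}=g(\kappa^i)\leq g(2^i)=2^i e^{-2^i}$. It therefore suffices to bound $\sum_{i=0}^{p-1}2^i e^{-2^i}$, and in fact I would establish the stronger termwise inequality $2^i e^{-2^i}\leq 2^{-i}$ for all integers $i\geq 0$.

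This termwise inequality is equivalent, after taking logarithms, to $2i\log 2\leq 2^i$. I would prove it by first showing $2i\leq 2^i$ for all $i\geq 0$ — immediate for $i=0,1$ and by an easy induction for $i\geq 1$, since $2^{i+1}=2\cdot 2^i\geq 2\cdot 2i=4i\geq 2(i+1)$ — and then using $\log 2<1$, so that $2i\log 2\leq 2i\leq 2^i$. Summing, $\sum_{i=0}^{p-1}\kappa^i e^{-\kappa^i}\leq\sum_{i=0}^{p-1}2^{-i}\leq\sum_{i=0}^{\infty}2^{-i}=2$, which is exactly the asserted bound.

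There is no real obstacle here; the only point requiring a moment's care is choosing an intermediate comparison sequence that is both summable and already dominant at the first few indices, and $2^{-i}$ works precisely because it exceeds $2^i e^{-2^i}$ from $i=0$ onward while summing to $2$. An alternative would be to bound $\sum_i 2^i e^{-2^i}$ by its first two or three explicit terms plus a crude geometric tail, but the $2^{-i}$ comparison is cleaner and makes transparent that the constant in the statement is sharp for this style of estimate.
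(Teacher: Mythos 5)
Your proof is correct and follows essentially the same approach as the paper's: both arguments reduce to a termwise comparison of $\kappa^i e^{-\kappa^i}$ with the geometric sequence $2^{-i}$ and then sum $\sum_{i\geq 0}2^{-i}=2$. The only cosmetic difference is that you obtain the termwise bound uniformly for all $i\geq 0$ by first passing to $\kappa=2$ via the monotonicity of $x\mapsto xe^{-x}$ on $[1,\infty)$ and checking $2i\log 2\leq 2^i$, whereas the paper applies the elementary bound $x^2e^{-x}\leq 1$ (for $x\geq 2$) to the terms $i\geq 1$ and tacitly uses $e^{-1}<1$ for the $i=0$ term.
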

\begin{proof}
	Observe that for $x\geq 2$, one trivially has that
	\[
	x^2\,e^{-x}\leq 1,
	\]
	so that
	\[
	\sum_{1}^{p-1}\kappa^{i}\,e^{-\kappa^i}\leq \sum_{1}^{p-1} \kappa^{-i}\leq \sum_{1}^{p-1} 2^{-i}\leq 1.
	\]
\end{proof}

\subsection{Upper bound on the loss function for condition A}
A key part of the proof of Theorem \ref{thm:stabilityA} is to obtain an upper bound on the Loss function at the initial time $t_0$, as given by
\begin{lemma}\label{lem:loss_bound_1}
	Suppose that $T\subset\mathbb R^n$ with measure $\nu(s)$ is a training set for a softmax DNN which satisfies condition A for some constants $\Lambda \geq 1$, $\psi=1$ and $0<\phi<1$ and for $t=t_0$. If for some $\eta>0$,
	\begin{equation}\label{eq:good_set_big_1a}
	\nu(G_\eta(t))>1-\delta
	\end{equation}
	for $\delta<1/2\Lambda $ and
	\begin{equation}\label{eq:bad_set_empty_1a}
	B_{-1}(t)=\emptyset,
	\end{equation}
	then the cross-entropy loss is bounded by: 
	\begin{equation}\label{eq:loss_bounded_1}
	\bar L(t)\leq e^{-\eta}+Ce^{-\sqrt{-2\log(\Lambda \delta)(1+\eta)}},
	\end{equation}
	where $C$ is a constant less than $13e/4$.
\end{lemma}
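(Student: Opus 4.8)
The plan is to start from the bound \eqref{eq:first_loss_bound}, namely
\[
\bar L(t_0)\leq (K-1)e^{-\eta}+\sum_{s\in G^C_\eta(t_0)}\nu(s)\log\left(1+(K-1)e^{-\delta X(s,\alpha(t_0))}\right),
\]
and to control the sum over the small ``bad-ish'' set $G^C_\eta(t_0)$ by a dyadic decomposition of the range of $\delta X$. Since $B_{-1}(t_0)=\emptyset$, all the $\delta X(s,\alpha(t_0))$ for $s\in G^C_\eta(t_0)$ lie in the interval $[-1,\eta]$, so in particular $\beta:=\min_{s\in T}\delta X(s,\alpha(t_0))\geq -1$. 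On $[-1,\infty)$ one has $\log(1+(K-1)e^{-x})\leq C_1 e^{-x}$ for an absolute constant (I would just use $\log(1+y)\leq y$ and absorb $K-1$), so it suffices to bound $\sum_{s\in G^C_\eta}\nu(s)\,e^{-\delta X(s,\alpha(t_0))}$.

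The core step is to partition $[-1,\eta]$ into dyadic layers: let $L_k=\{s: \delta X(s,\alpha(t_0))\in [x_k, x_{k+1})\}$ where the $x_k$ are chosen so that the midpoint structure in condition A \eqref{eq:cond_A} applies — concretely I would iterate condition A with $x_1=\beta$ and a geometric sequence of $x_2$'s approaching $0$, halving the gap $x_2-x_1$ at each stage, which is exactly the shape of the left side $\{\delta X<(x_1+x_2)/2\}$. Writing $F(x)=\nu(\{s: \delta X(s,\alpha(t_0))<x\})$, condition A with $m_0=0$, $\psi=1$ reads $F((x_1+x_2)/2)\leq \Lambda F(x_1)^\phi+\Lambda F(x_2)^2$; starting from $F(\beta)$ possibly small and bootstrapping, one gets that $F$ at a point at distance $\sim 2^{-k}$ above $\beta$ is doubly-exponentially small, of order $(\Lambda\delta)^{2^k}$ roughly, because the quadratic term $\Lambda F(\cdot)^2$ squares the mass at each halving. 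Feeding these layer masses into $\sum_k (\sup_{L_k} e^{-\delta X})\,\nu(L_k)$ and noting $e^{-\delta X}\leq e^{1}$ near $\beta$ while the relevant exponential weight for the layer ending near $0$ is controlled by $e^{-\eta}$-type terms through the far end, one arrives at a sum of the form $\sum_{k\geq 0} e^{2^k\log(\Lambda\delta) - (1+\eta)/2^{k+1}}$. This is precisely the quantity estimated in Lemma \ref{lem:double_exponential_sum_constant}, which gives the bound $C\,e^{-\sqrt{-2\log(\Lambda\delta)(1+\eta)}}$ with $C\leq 13/4$; combining with the leading $(K-1)e^{-\eta}$ term (and the stray factor $e$ from $e^{-\delta X}\leq e$ on the $\beta$-end, which is why the final constant is $13e/4$ rather than $13/4$) yields \eqref{eq:loss_bounded_1}. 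The hypotheses $\delta<1/2\Lambda$ and $\psi=1$ are exactly what make $\Lambda\delta<1/2$, so Lemma \ref{lem:double_exponential_sum_constant} applies.

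The main obstacle I anticipate is the bookkeeping in the bootstrap: one has to choose the dyadic breakpoints $x_k$ and track how the two terms on the right of \eqref{eq:cond_A} interact — the $\Lambda F(x_1)^\phi$ term (which does \emph{not} shrink fast, since $\phi<1$ only helps near $0$, not near $\beta$) versus the $\Lambda F(x_2)^2$ term that drives the double-exponential decay — and verify that along the sequence approaching $\beta$ it is genuinely the quadratic term that dominates so the iteration closes. One must also be careful that the geometric subdivision only needs finitely many steps to cover $[-1,\eta]$ up to a neighborhood of $0$ where the trivial bound $e^{-\eta}$ on the remaining mass $\nu(G^C_\eta)\leq\delta$ suffices, and that no layer is double-counted. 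Once the iteration is set up correctly, the passage to Lemma \ref{lem:double_exponential_sum_constant} is mechanical, so I would spend most of the write-up being explicit about the choice of breakpoints and the induction hypothesis on $F(x_k)$.
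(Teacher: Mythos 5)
Your proposal follows essentially the same route as the paper: same dyadic breakpoints $\eta_k=\beta+(\eta-\beta)/2^k$, same shell decomposition of the loss, same double-exponential sum, and the same reduction to Lemma~\ref{lem:double_exponential_sum_constant} (you even correctly pin down the source of the extra factor $e$ in the final constant). The one thing worth saying is that the ``main obstacle'' you anticipate --- having to show that the quadratic term $\Lambda F(x_2)^2$ dominates the $\Lambda F(x_1)^\phi$ term as $x_1=\beta$ is fixed --- is not an obstacle at all. Since $\beta=\min_{s\in T}\delta X(s,\alpha(t_0))$ and the sets in condition A use a strict inequality $\delta X<x_1$, one has $F(\beta)=\nu(\{s:\delta X(s,\alpha(t_0))<\beta\})=0$, so the $\phi$-term vanishes \emph{identically} at every stage of the iteration, and the recursion is simply $\nu(I_{k+1})\leq\Lambda\,\nu(I_k)^2$, which closes immediately to give $\nu(I_k)\leq\Lambda^{2^k-1}\delta^{2^k}$. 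The $\phi$-term matters only when this type of iteration is run with $x_1=0$ (as in the proof of Theorem~\ref{thm:stabilityA} proper, where $F(0)$ is controlled by $\bar L(t_0)/\log 2$), not here. With that observation the bookkeeping you were worried about disappears, and the rest of your sketch is mechanical exactly as you say.
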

\begin{proof}
	For each $k=0,1,2,\cdots...$, let
	\begin{equation*}
	\eta_k=\beta+\frac{-\beta+\eta}{2^k}\;\;\text{and}\;\;I_k=\left\{s\in T:\delta X(s,\alpha(t_0))<\eta_k\right\},
	\end{equation*}
	where $\beta:=\min_{s\in T}\delta X(s,\alpha(t_0))$. Observe the relation $(\eta_k+\beta)/2=\eta_{k+1}$. For fixed $k$, apply condition A for $x_1=\beta$ and $x_2=\eta_k$:
	\begin{align*}
	\nu(I_{k+1})&=\nu\left(\left\{s\in T:\delta X(s,\alpha(t_0))<\eta_{k+1}\right\}\right)\\
	&=\nu\left(\left\{s\in T:\delta X(s,\alpha(t_0))<\frac{\eta_{k}+\beta}{2}\right\}\right)\\
	&=\nu\left(\left\{s\in T:\delta X(s,\alpha(t_0))<\frac{x_1+x_2}{2}\right\}\right).\\
        \end{align*}
This implies that        
        \begin{align*}
	\nu(I_{k+1})&=\Lambda \nu\left(\left\{s\in T:\delta X(s,\alpha(t_0))<x_1\right\}\right)^\phi+\Lambda \nu\left(\left\{s\in T:\delta X(s,\alpha(t_0))<x_2\right\}\right)^{\psi+1}\\
	&=\Lambda \nu\left(\left\{s\in T:\delta X(s,\alpha(t_0))<\beta\right\}\right)^\phi+\Lambda \nu\left(\left\{s\in T:\delta X(s,\alpha(t_0))<\eta_k\right\}\right)^{\psi+1}\\
	&=0+\Lambda \nu(I_k)^2\quad\text{by the definition of $\beta$}\\
	&=\Lambda \nu(I_k)^2
	\end{align*}
	Since $\eta_0=\eta$, \eqref{eq:good_set_big_1a} gives
	\begin{equation*}
	\nu(I_0)=\nu\left(\left\{s\in T:\delta X(s,\alpha(t))<\eta\right\}\right)=1-\nu(G_\eta)\leq\delta.
	\end{equation*}
	Therefore, by induction
	\begin{equation*}
	\nu(I_k)\leq \Lambda ^{2^k-1}\delta^{2^k}.
	\end{equation*}
	We may now simply bound loss from above. Recall that the cross-entropy loss may be bounded by
	\begin{equation*}
	\bar L(t)\leq\sum_{s\in T}\nu(s)\log\left(1+(K-1)e^{-\delta X(s,\alpha(t))}\right).
	\end{equation*}
	Since $\beta$ is the minimum $\delta X$ value, $\delta X(s,\alpha(t_0))>\beta$ for all $s\in T$, so either $s\in G_\eta(t)$ or $s\in I_k\setminus I_{k+1}$ for some $k$. Additionally, $\log(1+(K-1)e^{-x})$ is decreasing in $x$, so if $\delta X(s,\alpha(t_0))\in I_k\setminus I_{k+1}$, then $\delta X(s,\alpha(t_0))>\eta_{k+1}$. Therefore,
	\begin{equation*}
	\log\left(1+(K-1)e^{-\delta X(s,\alpha(t))}\right)\leq \log\left(1+(K-1)e^{-\eta_{k+1}}\right).
	\end{equation*} 
	Using these facts, we make the following estimate
	\allowdisplaybreaks
	\begin{align*}
	\bar L(t_0)&\leq\sum_{s\in G_\eta(t)}\nu(s)\log\left(1+(K-1)e^{-\delta X(s,\alpha(t_0))}\right)+\sum_{k=0}^\infty\sum_{s\in I_k\setminus I_{k+1}}\nu(s)\log\left(1+(K-1)e^{-\delta X(s,\alpha(t_0))}\right)\\
	&\leq\sum_{s\in G_\eta(t)}\nu(s)\log\left(1+(K-1)e^{-\eta}\right)+\sum_{k=0}^\infty\sum_{s\in I_k\setminus I_{k+1}}\nu(s)\log\left(1+(K-1)e^{-\eta_{k+1}}\right)\\
	&=\log\left(1+(K-1)e^{-\eta}\right)\sum_{s\in G_\eta(t)}\nu(s)+\sum_{k=0}^\infty\log\left(1+(K-1)e^{-\eta_{k+1}}\right)\sum_{s\in I_k\setminus I_{k+1}}\nu(s).\\
        \end{align*}
As a consequence, we have that
        \begin{align*}
	\bar L(t_0)&\leq \nu(G_\eta(t))(K-1)e^{-\eta}+\sum_{k=0}^\infty \nu(I_k\setminus I_{k+1})(K-1)e^{-\eta_{k+1}}\\
	&\leq (1-\delta) (K-1)e^{-\eta}+\sum_{k=0}^\infty \nu(I_k)(K-1)e^{-\eta_{k+1}}\\
	&\leq (K-1)e^{-\eta}+\sum_{k=0}^\infty (K-1)\Lambda ^{2^k-1}\delta^{2^k}e^{-\eta_{k+1}},\\
        \end{align*}
and
        \begin{align*}
	\bar L(t_0) &= (K-1)e^{-\eta}+(K-1)\sum_{k=0}^\infty \Lambda ^{2^k-1}\delta^{2^k}e^{-\beta-\frac{\eta-\beta}{2^{k+1}}}\\
	&\leq(K-1) e^{-\eta}+(K-1)e^{-\beta}\sum_{k=0}^\infty e^{2^k\log(\Lambda \delta)-\frac{\eta-\beta}{2^{k+1}}}.
	\end{align*}
	Since $\beta>-1$, we have
	\begin{equation*}
	\bar L(t_0)\leq (K-1) e^{-\eta}+(K-1)e\sum_{k=0}^\infty e^{2^k\log(\Lambda \delta)-\frac{\eta+1}{2^{k+1}}}.
	\end{equation*}
	By Lemma \ref{lem:double_exponential_sum_constant}, we can find a constant $C$ less than $13e/4$ such that
	\begin{equation*}
	e\sum_{k=0}^\infty e^{2^k\log(\Lambda \delta)-\frac{\eta+1}{2^{k+1}}}\leq Ce^{-\sqrt{-2\log(\Lambda \delta)(1+\eta)}}.
	\end{equation*}
	Thus,
	\begin{equation*}
	\bar L(t)\leq (K-1)\left(e^{-\eta}+Ce^{-\sqrt{-2\log(\Lambda \delta)(1+\eta)}}\right).
	\end{equation*}		
\end{proof}
\subsection{Proof of Theorem \ref{thm:stabilityA}}
With Lemma \ref{lem:loss_bound_1}, we are now ready to prove Theorem \ref{thm:stabilityA}.
	Fix $\varepsilon>0$. Let $\delta_0=1/2\Lambda $, and choose $\delta<\delta_0$. We may apply Lemma \ref{lem:loss_bound_1} to see that
	\begin{equation*}
	\bar L(t_0)\leq P(\eta,\delta,\Lambda ):= (K-1)\left(e^{-\eta}+Ce^{-\sqrt{-2\log(\Lambda \delta)(1+\eta)}}\right).
	\end{equation*}
	Observe that $\lim_{\eta\to\infty}P(\eta,\delta,\Lambda )=0$, so by choosing $\eta_0$ sufficiently large and $\eta>\eta_0$, we can make loss arbitrarily small (see Remark \ref{rmk:explicit_constants_1} for an explicit estimate on $\eta_0$). 
	

	Since loss is decreasing in time, if $\bar L(t_0)$ is small, then $\bar L(t)$ is also small for all $t>t_0$. By making $\bar L(t)$ small, we will be able bound the size of good sets from below. To start, we will show that if $\bar L(t_0)<1/3\Lambda $, then
	\begin{equation}\label{eq:principal_good_set}
	G_{-\log(3\Lambda \bar L(t_0))}(t)>1-\frac{1}{2\Lambda }
	\end{equation}
	for all $t$. Suppose, to the contrary, that $G_{-\log(3\Lambda \bar L(t_0))}(t)\leq 1-\frac{1}{2\Lambda }$ for some $t$. Then the size of the complement of the good set is bounded below:
	\begin{equation*}
	G_{-\log(3\Lambda \bar L(t_0))}^C(t)>\frac{1}{2\Lambda }
	\end{equation*} 
	Therefore,
	\begin{align*}
	\bar L(t_0)&\geq\bar L(t)\\
	&\geq\sum_{s\in T}\nu(s)\log(1+e^{-\delta X(s,\alpha(t))})\\
	&\geq\sum_{s\in G^C_{-\log(3\Lambda\bar L(t_0))}}\nu(s)\log(1+e^{-\delta X(s,\alpha(t))}),\\
        \end{align*}
        which gives
        \begin{align*}
	\bar L(t_0)&\geq\nu(G_{-\log(3\Lambda \bar L(t_0))}^C(t))\log(1+e^{\log(3\Lambda \bar L(t_0))})\\
	&>\frac{1}{2\Lambda }\log(1+3\Lambda \bar L(t_0)).
	\end{align*}
	Since $\log$ is concave down, $\log(1+x)<x\log(2)$ for $0<x<1$. Thus,
	\begin{equation*}
	\bar L(t_0)>\frac{1}{2\Lambda }3\Lambda \bar L(t_0)\log(2)=\frac{3}{2}\log(2)\bar L(t_0)>\bar L(t_0).
	\end{equation*}
	This is a contradiction, so $G_{-\log(3\Lambda \bar L(t_0))}(t)>1-\frac{1}{2\Lambda }$.
	
	We will now use \eqref{eq:principal_good_set} to bound the size of all good sets from below. Suppose that $\bar L(t_0)<1/3\Lambda $. Let $\eta_k=-2^{-k}\log(3\Lambda \bar L(t_0))$ for all $k=0,1,2,\cdots$. Clearly, there is a recurrence relation: $\eta_{k+1}=\eta_k/2$. For fixed $k$, let $x_1=0$ and $x_2=\eta_k$. Then by condition A we can estimate the size of the complement of $G_{\eta_k}(t)$ for all $t$. First remark that
	\begin{align*}
	\nu\left(G_{\eta_{k+1}}^C(t)\right)&=\nu\left(\left\{s\in T:\delta X(s,\alpha(t))<\eta_{k+1}\right\}\right)\\
	&=\nu\left(\left\{s\in T:\delta X(s,\alpha(t))<\frac{0+\eta_k}{2}\right\}\right)\\
	&=\nu\left(\left\{s\in T:\delta X(s,\alpha(t))<\frac{x_1+x_2}{2}\right\}\right).\\
        \end{align*}
        Applying now condition A, we obtain
        \begin{align*}
	\nu\left(G_{\eta_{k+1}}^C(t)\right)&\leq \Lambda \left(\nu\left(\left\{s\in T:\delta X(s,\alpha(t))<x_1\right\}\right)^\phi+\nu\left(\left\{s\in T:\delta X(s,\alpha(t))<x_2\right\}\right)^2\right)\\
	&\leq \Lambda \left(\nu\left(\left\{s\in T:\delta X(s,\alpha(t))<0\right\}\right)^\phi+\nu\left(\left\{s\in T:\delta X(s,\alpha(t))<\eta_k\right\}\right)^2\right)\\
	&\leq \Lambda \left(\nu\left(G_0^C(t)\right)^\phi +\nu\left(G_{\eta_k}^C(t) \right)^2\right)\\
	\end{align*}
	From \eqref{eq:loss_and_accuracy},
	\begin{equation*}
	\nu(G_0^C(t))=1-\acc(t)\leq\frac{\bar L(t_0)}{\log 2}.
	\end{equation*}
	Therefore,
	\begin{equation}\label{eq:recursive_good_set_size}
	\nu\left(G_{\eta_{k+1}}^C(t)\right)\leq \Lambda \left(\left(\frac{\bar L(t_0)}{\log 2}\right)^\phi+\nu\left(G_{\eta_k}^C(t)\right)^2\right)
	\end{equation}
	Since $\eta_k$ is decreasing with $k$, $\nu(G_{\eta_k}^C(t))$ is also decreasing in $k$. This means that if for some $k_0$,
	\begin{equation}\label{eq:k0}
	\nu\left(G_{\eta_{k_0}}^C(t)\right)\leq \sqrt{2}\left(\frac{\bar L(t_0)}{\log 2}\right)^{\phi/2},
	\end{equation}
	then \eqref{eq:k0} also holds with $\eta_{k_0}$ replaced by $\eta_k$ for all $k>k_0$. Therefore, for all $k>k_0$, we may use \eqref{eq:recursive_good_set_size} to estimate:
	\begin{equation}\label{eq:good_set_bound_for_small_eta}
	\nu\left(G_{\eta_k}^C(t)\right)\leq \Lambda \left(\left(\frac{\bar L(t_0)}{\log 2}\right)^\phi+\nu\left(G_{\eta_{k-1}}^C(t)\right)^2\right)\leq \Lambda \left(\left(\frac{\bar L(t_0)}{\log 2}\right)^\phi+2\left(\frac{\bar L(t_0)}{\log 2}\right)^\phi\right)=3\Lambda \left(\frac{\bar L(t_0)}{\log 2}\right)^\phi.
	\end{equation}
	On the other hand, if $k\leq k_0$,
	\begin{equation*}
	\nu\left(G_{\eta_{k}}^C(t)\right)>\sqrt{2}\left(\frac{\bar L(t_0)}{\log 2}\right)^{\phi/2},
	\end{equation*}
	or equivalently,
	\begin{equation*}
	\left(\frac{\bar L(t_0)}{\log 2}\right)^\phi<\frac{1}{2}\nu\left(G_{\eta_{k}}^C(t)\right)^2.
	\end{equation*}
	Thus, for $k<k_0$, we can use \eqref{eq:recursive_good_set_size} to obtain
	\begin{equation*}
	\nu(G_{\eta_k}^C(t))\leq \Lambda \left(\left(\frac{\bar L(t_0)}{\log 2}\right)^\phi+\nu\left(G_{\eta_{k-1}}^C(t)\right)^2\right)\leq \Lambda \left(\frac{1}{2}\nu\left(G_{\eta_{k-1}}^C(t)\right)^2+\nu\left(G_{\eta_{k-1}}^C(t)\right)^2\right)=\frac{3}{2}\nu\left(G_{\eta_{k-1}}^C(t)\right)^2.
	\end{equation*}
	By induction,
	\begin{equation*}
	\nu(G_{\eta_k}^C(t))\leq\left(\frac{3}{2}\Lambda \right)^{2^k-1}\nu\left(G_{\eta_0}(t)\right)^{2^k}.
	\end{equation*}
	From \eqref{eq:principal_good_set}, $\nu\left(G_{\eta_0}(t)\right)\leq 1/2\Lambda $, so
	\begin{equation}\label{eq:good_set_bound_for_large_eta}
	\nu\left(G_{\eta_k}^C(t)\right)\leq\left(\frac{3}{2}\Lambda \right)^{2^k-1}\left(\frac{1}{2\Lambda }\right)^{2^k}\leq\left(\frac{3}{4}\right)^{2^k}=\left(\frac{3}{4}\right)^{-\frac{\log(3\Lambda \bar L(t_0))}{\eta_k}}.
	\end{equation}
	For any $k$, either \eqref{eq:good_set_bound_for_small_eta} and \eqref{eq:good_set_bound_for_large_eta} holds, so for all $k\geq 0$,
	\begin{equation}\label{eq:discrete_good_set_bound}
	\nu\left(G_{\eta_k}^C(t)\right)\leq\left(\frac{3}{4}\right)^{-\frac{\log(3\Lambda \bar L(t_0))}{\eta_k}}+3\Lambda \left(\frac{\bar L(t_0)}{\log 2}\right)^\phi.
	\end{equation}
	We wish to find a bound on good sets for all $\eta^\ast$ with $0<\eta^\ast<-\log(3\Lambda \bar L(t_0))$, not just $\eta^\ast=\eta_k$ for some $k$. By the monotonicity of good sets, if $\eta_{k+1}<\eta^\ast\leq\eta_k$,
	\begin{equation}\label{eq:good_set_monotinicity}
	\nu\left(G_{\eta_{k+1}}^C(t)\right)\leq\nu\left(G_{\eta^\ast}^C(t)\right)\leq\nu\left(G_{\eta_{k}}^C(t)\right).
	\end{equation}
	Since $\eta_{k}=2\eta_{k+1}$, $1/2\eta^\ast<1/\eta_k$. Applying \eqref{eq:discrete_good_set_bound} to \eqref{eq:good_set_monotinicity}, we have
	\begin{equation*}
	\nu\left(G_{\eta^\ast}^C(t)\right)\leq\left(\frac{3}{4}\right)^{-\frac{\log(3\Lambda \bar L(t_0))}{\eta_k}}+3\Lambda \left(\frac{\bar L(t_0)}{\log 2}\right)^\phi\leq\left(\frac{3}{4}\right)^{-\frac{\log(3\Lambda \bar L(t_0))}{2\eta^\ast}}+3\Lambda \left(\frac{\bar L(t_0)}{\log 2}\right)^\phi.
	\end{equation*}
	Since $\nu(G_{\eta^\ast}(t))=1-\nu(G_{\eta^\ast}(t))$, for all $\eta^\ast$ with $0<\eta^\ast<\log(3\Lambda \bar L(t_0))$,
	\begin{equation}\label{eq:direct_good_set_bound}
	\nu\left(G_{\eta^\ast}(t)\right)>1-\left(\frac{3}{4}\right)^{-\frac{\log(3\Lambda \bar L(t_0))}{2\eta^\ast}}-3\Lambda \left(\frac{\bar L(t_0)}{\log 2}\right)^\phi,
	\end{equation}
	and since $G_0(t)=\cup_{\eta^\ast>0}G_{\eta^\ast}(t)$,
	\begin{equation}\label{eq:direct_accuracy_bound}
	\begin{split}
	\acc(t)&=\nu\left(G_0(t)\right)\\
	&=\lim_{\eta^\ast\to 0}\nu\left(G_{\eta^\ast}(t)\right)\\
	&>\lim_{\eta^\ast\to\infty}\left(1-\left(\frac{3}{4}\right)^{-\frac{\log(3\Lambda \bar L(t_0))}{2\eta^\ast}}-3\Lambda \left(\frac{\bar L(t_0)}{\log 2}\right)^\phi\right)\\
	&=1-3\Lambda \left(\frac{\bar L(t_0)}{\log 2}\right)^\phi.
	\end{split}
	\end{equation}
	Finally, by choosing $\eta_0$ sufficiently large and $\eta>\eta_0$, we can make $\bar L(t_0)$ sufficiently large that $3\Lambda (\bar L(t_0)/\log 2)^\phi<\varepsilon$. Therefore,
	\begin{equation*}
	\acc(t)>1-\varepsilon
	\end{equation*}
	and
	\begin{equation*}
	\nu\left(G_{\eta^\ast}(t)\right)>1-\epsilon-\left(\frac{3}{4}\right)^{-\frac{\log(3\Lambda \bar L(t_0))}{2\eta^\ast}},
	\end{equation*}
        concluding the proof.
%
\subsection{Upper bound on the loss for condition B}
%
Just as for Theorem \ref{thm:stabilityA}, the first step in the proof of Theorem \ref{thm:stabilityB} is to derive an upper bound on the loss function based now on condition B.
\begin{lemma}\label{lem:loss_bound_2}
	Suppose that $T\subset\mathbb R^n$ with weights $\nu(s)$ is a training set for a softmax DNN which satisfies condition B in the sense of definition \ref{def:cond_B}, at time $t_0$ for $m_0$ and some constants $\kappa$, $\delta>0$ and $\sigma>0$. For $I_0$ corresponding to the point $x=0$, if for some $\eta<I_0$, $\delta_0>0$, and $\delta_0<\delta$,
	\begin{equation}\label{eq:good_set_big_2a}
	\nu(G_\eta(t_0))>1-\delta_0
	\end{equation}
	and
	\begin{equation}\label{eq:bad_set_empty_2a}
	B_{-1}(t_0)=\emptyset,
	\end{equation}
	then the cross-entropy loss is bounded by: 
	\begin{equation}\label{eq:loss_bounded_2}
\bar L(t_0)\leq C\,m_0+(K-1)\left(e^{-\eta}+\delta_0\,e^{-\eta/\kappa} +\delta_0\,\frac{\sigma+1}{\eta^\gamma}\,\left(e+2\right)\right),
	\end{equation}
	for any $0<\gamma\leq\min\{1,\log(1+\sigma)/\log\kappa\}$.
\end{lemma}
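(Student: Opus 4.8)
The plan is to bound $\bar L(t_0)$ by starting from the elementary estimate \eqref{eq:first_loss_bound} at time $t_0$ and controlling its second term. Using $\log(1+x)\le x$,
\[
\bar L(t_0)\le (K-1)e^{-\eta}+\sum_{s\in G^C_\eta(t_0)}\nu(s)\log\!\left(1+(K-1)e^{-\delta X(s,\alpha(t_0))}\right)\le (K-1)e^{-\eta}+(K-1)\,\Sigma,
\]
where $\Sigma:=\sum_{s\in G^C_\eta(t_0)}\nu(s)\,e^{-\delta X(s,\alpha(t_0))}$, so it suffices to show $\Sigma\le \delta_0 e^{-\eta/\kappa}+(e+2)\,\delta_0(1+\sigma)\,\eta^{-\gamma}+(1+e)\,m_0$. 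Two facts get used repeatedly: since $B_{-1}(t_0)=\emptyset$, $\delta X(s,\alpha(t_0))>-1$ for \emph{every} $s\in T$, hence $e^{-\delta X(s,\alpha(t_0))}<e$ everywhere, which lets us dominate by $e\,m_0$ any mass we cannot resolve more finely; and $G^C_\eta(t_0)\subseteq\{\delta X(s,\alpha(t_0))\in(-1,\eta]\}\subseteq[-\eta,\eta]$ in the regime of interest (where $\eta$ is large, certainly $\eta\ge\kappa$).

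The core of the proof is a geometric iteration of condition B centered at $x_0=0$. Put $\eta_k:=\eta\kappa^{-k}$, $I^{(k)}:=[-\eta_k,\eta_k]$; since $\kappa\ge 2$ and $\eta<I_0$, every $I^{(k)}$ with $k\ge1$ has length $<I_0$, and $\kappa I^{(k)}=I^{(k-1)}$ \emph{exactly} (this exact telescoping is why symmetric intervals at $x_0=0$ are the right choice). With $b_k:=\nu\{s:\delta X(s,\alpha(t_0))\in I^{(k)}\}$, the hypothesis gives $b_0\le\delta_0<\delta$, the $b_k$ decrease, and \eqref{eq:cond_B} yields $b_{k-1}\ge(1+\sigma)b_k$, hence $b_k\le\delta_0(1+\sigma)^{-k}$, for all $k$ up to the cutoff $K_*$ below which $(1+\sigma)b_k$ stays $\ge m_0$; for $k>K_*$ one simply has $b_k<m_0$. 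Now split $G^C_\eta(t_0)\subseteq(-1,\eta]$ into the outer annulus $(\eta/\kappa,\eta]$, the intermediate annuli $(\eta_{k+1},\eta_k]$ for $1\le k\le N-1$ with $N:=\max\{k:\eta_k\ge1\}$, and the core $(-1,\eta_N]$. On the outer annulus $e^{-\delta X}\le e^{-\eta/\kappa}$ and mass $\le b_0\le\delta_0$, contributing $\le\delta_0 e^{-\eta/\kappa}$. On intermediate annulus $k$ one has $\eta_{k+1}\ge1$ so $e^{-\delta X}\le e^{-\eta_{k+1}}$ and mass $\le b_k$; using the identity $\kappa^{\log(1+\sigma)/\log\kappa}=1+\sigma$ together with $\gamma\le\log(1+\sigma)/\log\kappa$ and $\eta_{k+1}/\eta<1$ gives $(1+\sigma)^{-k}\le(1+\sigma)(\eta_{k+1}/\eta)^{\gamma}$, whence, with $\eta_{k+1}\ge1$ and $\gamma\le1$,
\[
\sum_{k=1}^{\min(N-1,K_*)}b_k\,e^{-\eta_{k+1}}\le\frac{\delta_0(1+\sigma)}{\eta^{\gamma}}\sum_{k\ge1}\eta_{k+1}^{\gamma}e^{-\eta_{k+1}}\le\frac{\delta_0(1+\sigma)}{\eta^{\gamma}}\sum_{k\ge1}\eta_{k+1}e^{-\eta_{k+1}}\le\frac{2\,\delta_0(1+\sigma)}{\eta^{\gamma}},
\]
the last step because the $\eta_{k+1}\ge1$ form a $\kappa$-geometric sequence, so by monotonicity of $xe^{-x}$ on $[1,\infty)$ and Lemma \ref{lem:double_exponential_sum_constant_2} one has $\sum_i\eta_{i}e^{-\eta_{i}}\le\sum_i\kappa^i e^{-\kappa^i}\le2$; any intermediate annuli indexed $>K_*$ together carry mass $<b_{K_*+1}<m_0$ and have $e^{-\delta X}<1$, contributing $<m_0$. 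The core has $e^{-\delta X}<e$ and mass $\le b_N$, which is $\le\delta_0(1+\sigma)^{-N}\le\delta_0(1+\sigma)\eta^{-\gamma}$ when $N\le K_*$ (same identity) and $<m_0$ otherwise, so it contributes $\le e\,\delta_0(1+\sigma)\eta^{-\gamma}+e\,m_0$. Summing the three blocks gives the claimed bound on $\Sigma$; multiplying by $(K-1)$, restoring the $(K-1)e^{-\eta}$ term, and folding the $m_0$-contributions into $C:=(K-1)(1+e)$ (allowing $\kappa,\sigma$-dependent slack) yields \eqref{eq:loss_bounded_2}.

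The step I expect to be the main obstacle is the bookkeeping around the $m_0$-floor and the scale $\eta_N$ at which the $\eta_k$ cross $1$: condition B cannot contract the mass below the scale $m_0$, so one must define $K_*$ carefully, invoke $b_k\le\delta_0(1+\sigma)^{-k}$ only while \eqref{eq:cond_B} actually licenses it, and then verify that \emph{all} leftover mass — the core, plus (when $K_*<N-1$) the intermediate annuli indexed between $K_*$ and $N-1$ — sits inside a set on which $\delta X>-1$ forces $e^{-\delta X}<e$, so that it costs only $O(m_0)$. None of these steps is individually deep, but assembling them into the single clean inequality \eqref{eq:loss_bounded_2} with the stated constant $(e+2)$ requires care, in particular in keeping track of which regime ($N\le K_*$ versus $N>K_*$) one is in.
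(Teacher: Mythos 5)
Your argument is correct and follows essentially the same route as the paper's: iterate condition B around $x_0=0$ to obtain geometric decay of the mass in $\kappa$-adic annuli of $\delta X$-values, then sum the loss contributions annulus by annulus using Lemma \ref{lem:double_exponential_sum_constant_2}. The differences are cosmetic — you use symmetric intervals $I^{(k)}=[-\eta\kappa^{-k},\eta\kappa^{-k}]$ with exact telescoping $\kappa I^{(k)}=I^{(k-1)}$ where the paper uses one-sided $[0,\ell)$ with a $j_{\max}$ optimization, and you reduce early to $\Sigma=\sum\nu(s)e^{-\delta X}$ via $\log(1+x)\le x$ — and your $K_*$ cutoff is in fact unnecessary, since \eqref{eq:cond_B} (unlike \eqref{eq:no_small_isolated_data_cluster}) carries no $-m_0$ correction, so $b_{k-1}\ge\max\{m_0,(1+\sigma)b_k\}\ge(1+\sigma)b_k$ holds unconditionally once $b_{k-1}<\delta$.
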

\begin{proof}
Without loss of generality, we may assume that $\eta\geq 1$.  Consider $\ell>0$ such that $I:=[0,\ell)\subset [-1,\eta)$. Since $\eta<I_0$, we may apply condition B at point $0$ and we have either that
        \[
\nu\left(\left\{s\in T:\delta X(s,\alpha(t_0))\in I\right\}\right)\leq m_0,
\]
or that
	\begin{equation*}
	\nu\left(\left\{s\in  T:\delta X(s,\alpha(t_0))\in \kappa\,I\right\}\right)\geq\min\{\delta,\;(1+\sigma)\,\nu\left(\left\{s\in T:\delta X(s,\alpha(t_0))\in I\right\}\right)\}.
	\end{equation*}
	Applying condition B repeatedly $j$ times in this last case, we conclude that
	\begin{equation*}
	\nu\left(\left\{s\in  T:\delta X(s,\alpha(t_0))\in \kappa^j\,I\right\}\right)\geq\min\{\delta,\;(1+\sigma)^j\nu\left(\left\{s\in  T:\delta X(s,\alpha(t_0))\in I\right\}\right)\}.
	\end{equation*}
	By \eqref{eq:good_set_big_2a}, $\nu\left(\left\{s\in T:\delta X(s,\alpha(t_0))<\eta\right\}\right)\leq\delta_0$. Therefore, provided $\kappa^j I\subset (-\infty,\;\eta)$,
	\begin{equation*}
	\delta_0\geq\nu\left(\left\{s\in T:\delta X(s,\alpha(t_0))\in \kappa^j\, I\right\}\right)\geq\min\{\delta,\;(1+\sigma)^j\nu\left(\left\{s\in  T:\delta X(s,\alpha(t_0))\in I\right\}\right)\}.
	\end{equation*}
Since $\delta_0<\delta$, we conclude that
	\begin{equation}\label{eq:interval_size_estimate_1}
	\nu\left(\left\{s\in  T:\delta X(s,\alpha(t_0))\in I\right\}\right)\leq \max\left(m_0,\;\frac{\delta_0}{(\sigma+1)^j}\right)
	\end{equation}
	for all $j$ so that $\kappa^j I\subset(-\infty,\eta)$, or equivalently, for all $j$ so that $\kappa^j\,\ell\leq\eta$. Obviously, \eqref{eq:interval_size_estimate_1} is best for $j$ as large as possible with the largest value given by
	\begin{equation*}
j_{\text{max}}=\left\lfloor\frac{\log\left(\frac{\eta}{\ell}\right)}{\log \kappa}\right\rfloor>\frac{\log\left(\frac{\eta}{\ell}\right)}{\log\kappa}-1.
	\end{equation*}
	Therefore,
	\begin{equation*}
	\begin{split}
	\nu\left(\left\{s\in  T:\delta X(s,\alpha(t_0))\in I\right\}\right)&\leq m_0+\frac{\delta_0}{(\sigma+1)^{j_{\text{max}}}}\\
	&<m_0+\frac{\delta_0}{(\sigma+1)^{\frac{\log\left(\frac{\eta}{\ell}\right)}{\log \kappa}-1}}\\
        \end{split}
	\end{equation*}
        We have thus proved the bound
        \begin{equation}\label{eq:interval_size_estimate_2}
\nu\left(\left\{s\in  T:\delta X(s,\alpha(t_0))\in I\right\}\right)	<m_0+\delta_0\,(\sigma+1)\left(\frac{\ell}{\eta}\right)^{\gamma},\\
\end{equation}	
	where $\gamma=\log(\sigma+1)/\log \kappa$. If condition B holds for some $\sigma$, it also holds for all smaller $\sigma$, so we may assume without loss of generality that $\sigma<\kappa-1$, and therefore $0<\gamma<1$. Similarly we may assume that $\kappa\geq 2$.
	
	Now we will apply \eqref{eq:interval_size_estimate_2} to explicit intervals. Let $p=\lfloor\log(\eta)/\log\kappa\rfloor$ and let $I_i=[\kappa^i,\kappa^{i+1})$ for all $i\in \N$ with $i<p$. We also define $I_{-1}=[-1,1)$ and $I_p=[\kappa^p,\ \eta)$. For $i\geq 0$, $I_i\in [0,\ \kappa^{i+1}]$ with $\ell_i=\kappa^{i+1}$. Therefore by \eqref{eq:interval_size_estimate_2},
	\begin{equation}\label{eq:interval_size_estimate_3}
	\nu\left(\left\{s\in T:\delta X(s,\alpha(t_0))\in I_i\right\}\right)\leq m_0 +\delta_0\,(1+\sigma)\left(\frac{\ell_i}{\eta}\right)^{\gamma}.
	\end{equation}
	The interval $I_{-1}$ is centered at $0$ and has width $1$, so
	\begin{equation}\label{eq:interval_size_estimate_4}
	\nu\left(\left\{s\in T:\delta X(s,\alpha(t_0))\in I_{-1}\right\}\right)\leq m_0 +\delta_0\,(1+\sigma)\left(\frac{1}{\eta}\right)^{\gamma}.
	\end{equation}
        Finally since $\kappa^p>\kappa^{\log\eta/\log\kappa-1}$, we simply bound for $I_p$ 
        \begin{equation}\label{eq:interval_size_estimate_5}
	\nu\left(\left\{s\in T:\delta X(s,\alpha(t_0))\in I_{p}\right\}\right)\leq \delta_0.
	\end{equation}
	For each $s\in T$, either $\delta X(s,\alpha(t_0))\geq \eta$, or $\delta X(s,\alpha(t_0))\in I_i$ for integer $i\geq-1$. Therefore,
	\begin{equation*}
	\begin{split}
	\bar L(t_0)&\leq\sum_{s\in T}\nu(s)\log\left(1+(K-1)e^{-\delta X(s,\alpha(t_0))}\right)\\
	&=\sum_{\substack{s\in T\\ \delta X(s,\alpha(t_0))\geq\eta}}\nu(s)\log\left(1+(K-1)e^{-\delta X(s,\alpha(t_0))}\right)+\sum_{i=-1}^{p-1}\sum_{\substack{s\in T\\ \delta X(s,\alpha(t_0))\in I_i}}\nu(s)\log\left(1+(K-1)e^{-\delta X(s,\alpha(t_0))}\right)\\
	&\leq\log(1+(K-1)e^{-\eta})\sum_{\substack{s\in T\\ \delta X(s,\alpha(t_0))\geq\eta}}\nu(s)+\sum_{i=-1}^{p-1}\log\left(1+(K-1)e^{-\inf I_i}\right)\sum_{\substack{s\in T\\ \delta X(s,\alpha(t_0))\in I_i}}\nu(s).\\
        \end{split}
        \end{equation*}
Of course by decomposing       
	\begin{equation*}
	\begin{split}
	\bar L(t_0)&	\leq(K-1)e^{-\eta}\nu(G_\eta(t_0))+\sum_{i=-1}^{p-1}\log\left(1+(K-1)e^{-\inf I_i}\right)\nu\left(\left\{s\in T:\delta X(s,\alpha(t_0))\in I_i\right\}\right)\\
	&\leq(K-1)\,e^{-\eta}+\log\left(1+(K-1)e\right)\nu\left(\left\{s\in T:\delta X(s,\alpha(t_0))\in I_{-1}\right\}\right)\\
        &+(K-1)\,e^{-\eta/\kappa}\,\nu\left(\left\{s\in T:\delta X(s,\alpha(t_0))\in I_{p}\right\}\right)\\
        &+(K-1)\sum_{i=0}^{p-1}e^{-\kappa^i}\nu\left(\left\{s\in T:\delta X(s,\alpha(t_0))\in I_i\right\}\right).\\
\end{split}
	\end{equation*}
We now use \eqref{eq:interval_size_estimate_3}, \eqref{eq:interval_size_estimate_4} and \eqref{eq:interval_size_estimate_5} to derive  
        	\begin{equation}\label{eq:cond_B_loss_estimate}
	\begin{split}
	  \bar L(t_0)
          \leq & C\,m_0+(K-1)e^{-\eta}+(K-1)\,\delta_0\,e^{-\eta/\kappa}+\delta_0\,(\sigma+1)\,\frac{\log(1+e\,(K-1))}{\eta^\gamma}\\
&       +\delta_0\,(\sigma+1)(K-1)\sum_{i=0}^{p-1} \left(\frac{\kappa^{i}}{\eta}\right)^\gamma\, e^{-\kappa^i}.
	\end{split}
	\end{equation}
	We need to estimate the value of the sum above. 
        By Lemma \ref{lem:double_exponential_sum_constant_2},
we have that, since $\gamma<1$, 	
        \begin{equation}\label{eq:cond_B_loss_sum_estimate2}
	\begin{split}
          \sum_{0=1}^{p-1} \left(\frac{\kappa^{i}}{\eta} \right)^\gamma\, e^{-\kappa^i}&= \frac{1}{\eta^\gamma}\,\sum_{i=0}^p \kappa^{i\,\gamma}\,e^{-\kappa^i}\leq \frac{1}{\eta^\gamma}\,\sum_{i=0}^p \kappa^{i}\,e^{-\kappa^i}\leq  \frac{2}{\eta^\gamma}.
        \end{split}
	\end{equation}
	Applying \eqref{eq:cond_B_loss_sum_estimate2} to \eqref{eq:cond_B_loss_estimate}, we arrive at
\[
\bar L(t_0)\leq C\,m_0+(K-1)\left(e^{-\eta}+\delta_0\,e^{-\eta/\kappa} +\delta_0\,\frac{\sigma+1}{\eta^\gamma}\,\left(e+2\right)\right),
\]
which finishes the proof.
\end{proof}
\subsection{Proof of Theorem \ref{thm:stabilityB}}
We start with the trivial bound derived from \eqref{eq:loss_estimates} by a sort of Chebyshev inequality
\begin{equation}
\sum_{s\in G_{\eta^*}^c(t)} \nu(s)\,\log (1+e^{-\eta^*})\leq \bar L(t)\leq \bar L(t_0).
\label{chebyshev}
\end{equation}
As a consequence, we obtain from Lemma \ref{lem:loss_bound_2} that provided $G_{-1}^c(t_0)=\emptyset$ and $\nu(G_\eta(t_0))>1-\delta_0$,
\[
\nu(G_0^c(t))\leq C\,m_0+\frac{K-1}{\log 2}\,\left(e^{-\eta}+\delta_0\,e^{-\eta/\kappa} +\delta_0\,\frac{\sigma+1}{\eta^\gamma}\,\left(e+2\right)\right).
\]
{We can make sure that the right-hand side is less than $\eps$ if for some constant $C(K,\sigma,\kappa)$
\[
m_0\leq \frac{\eps}{2C},\quad \eta\geq \kappa\log \frac{1}{\eps}+C,\quad \delta_0\leq \eps\frac{\eta^\gamma}{C}.
\]
Of course one could even be rather explicit on $C$
\[
C=\max\left(\kappa\log \frac{6(K-1)}{\log 2}, \;6\frac{K-1}{\log 2}\,(\sigma+1)\,(e+2)\right).
\]
This immediately proves the first part of Theorem \ref{thm:stabilityB}.}

For the second part, we note that the above choice of $\eta$ and $\delta_0$ also guarantees that
\[
\bar L(t_0)\leq \eps\,\log 2.
\]
Therefore by \eqref{chebyshev} and for $\eta^*\geq 0$, we have that
\[
\nu(G_{\eta^*}^C(t))\leq \eps\,\frac{\log 2}{\log (1+e^{-\eta^*})}\leq 2\,\log 2\,\eps\,e^{\eta^*}.
\]

\medskip

We finish the proof by a technical remark which may, in some cases, improve the estimates. Define a strip
\[
S=\{s\,:\;0\leq \delta X(s,\alpha(t))\leq \ell\}.
\]
We may directly apply the bound \eqref{eq:interval_size_estimate_2}, proved previously, which we recall below: For any $\eta>\ell$,
\[
\nu(S)<m_0+\nu(G_{\eta}^c(t))\,(\sigma+1)\,\frac{\ell^\gamma}{\eta^\gamma}.
\]
This implies that
\begin{equation*}
\nu(S)<m_0+2\,\log 2\,\eps\,e^{\eta}\,(\sigma+1)\,\frac{\ell^\gamma}{\eta^\gamma}.
\end{equation*}
One may optimize in $\eta$ by finding the minimum of
\[
f(\eta)=\frac{e^\eta}{\eta^\gamma},
\]
which is obtained at $\eta=\gamma$. Therefore 
\begin{equation}
  \nu(S)<\left\{\begin{aligned} &m_0+2\,\log 2\,\eps\,\ell^\gamma\,(\sigma+1)\,\frac{e^\gamma}{\gamma^\gamma}\quad \mbox{if}\ \ell\leq\gamma,\\
& m_0+2\,\log 2\,\eps\,e^{\ell}\,(\sigma+1)\quad \mbox{if}\ \ell>\gamma.
  \end{aligned}\right.\label{improvedbound}
\end{equation}
This of course has to be compared with the trivial bound
\[
\nu(S)\leq \nu(G_{\ell}^c)\leq 2\,\log 2\,\eps\,e^{\ell},
\]
which makes it obvious that \eqref{improvedbound} is only useful if $\ell$ is small enough.
\subsection{Proof of Theorem \ref{thm:data_clusters_to_confidences}}
The proof is performed by induction on the number of layers in the network. This is the reason why we consider the no small data clusters condition. We recall here the definition of truncated ellipsoidal cylinders
\begin{equation}
E=E(Q,s,v_1,t_1,\ldots,v_k,t_k)=\left\{x,\ (x-s)^T\,Q\,(x-s)\leq 1\ \mbox{and}\ v_i\cdot x\leq t_i\ \forall i=1,\ldots, k\right\},\label{truncatedcylinder}
\end{equation}
for some symmetric {positive semi-definite} matrix $Q$. The dilated cylinders by some factor $\kappa$ are obtained with
        \begin{equation}
\kappa\,E=\left\{x,\ (x-s)^T\,Q\,(x-s)\leq \kappa^2\ \mbox{and}\ v_i\cdot x\leq t_i\ \forall i=1,\ldots, k\right\}.\label{dilatedcylinder}
        \end{equation}
        We then recall the no small data clusters condition on any measure $\mu$ for given $r$, $k$, $m_0$, $\kappa$, $\sigma$ and $\delta$, 
        \begin{equation}
          \begin{split}
            &\forall Q\in M_d(\R^d)\ \mbox{with}\ \mbox{rank}\,Q\leq r,\; \forall v_1,\ldots,v_k\in \R^d\setminus\{0\},\;\forall s\in \R^d,\;\forall t_1,\ldots,t_k\in \R,\ \exists \eps_0\ \mbox{s.t.}\ \forall \eps\leq \eps_0,\\
            &\qquad \mu\left(\kappa\,\eps E(u,s,v_1,t_1,\ldots,v_k,t_k)\right)\geq\min\Big\{\delta,\; \max\big\{m_0,\ (1+\sigma)\,\mu\left(\eps\,E(u,s,v_1,t_1,\ldots,v_k,t_k)\right)\big\}-m_0\Big\}.
\end{split}
\label{doublingstrip}
  \end{equation}
        A first key point is to observe that reasonable distributions $\mu$ satisfy such assumptions. The following theorem shows that NSDC condition \eqref{doublingstrip} is satisfied in a simple case when data is concentrated on smooth manifolds that are non-degenerate in some appropriate sense related to classical transversality conditions with curvature requirements.

More precisely, we will call a smooth manifold $M$ of dimension $d_0$ non degenerate with no flat parts of dimension $r$ in the present context if:
        \begin{itemize}
          \item for any affine plane $P$ of dimension $r$ or more, the intersection $M\cap P$ is composed of a finite number of connected components, (the intersection of $P$ and $M$ consists of ``finitely many pieces'').
          \item The neighborhood of M around $P$ can be parametrized smoothly around each component of $M\cap P$. Specifically, for some $\bar \eps$, there exists a finite number of balls $B_i$, corresponding smooth applications $f_i:\;[0,\ 1]^{d_0}\to B_i$ and probability measures $\nu_i$ s.t. 
            \begin{equation}
              \begin{split}
                &- M\cap \{x\,|\;d(x,P)\leq \eps_0\} =\bigcup_i f_i([0,\ 1]^{d_0}),\\
                &- M\cap P=\bigcup_i f_i(\{0\}\times [0,\ 1]^{d_0-1}),\\
                &- \exists C,\ \forall \;\mbox{Borel set}\ O\subset B_i,
                \quad  \mathcal{H}^{d_0}|_M(O) = \int_{[0,\ 1]^{d_0-1}} |\{t\,|\; f_i(t,y)\in O\}|\,\nu_i(dy),\\
                &- \forall (t,y)\in [0,\ 1]^{d_0},\quad (\partial_t f(t,y),\ldots, \partial^d_t f(t,y))\ \mbox{spans}\ \R^d.
                \end{split}\label{nondegeneratecurvature}
              \end{equation}
        \end{itemize}        
        {We next provide a heuristic explanation of the above two conditions. To explain the first bullet point recall that we interpreted $\epsilon_0$ in \eqref{eq:toy_cond} as the scale at which non-flatness is resolved by a doubling condition. Roughly speaking, on each component $M_i$ of $M\cap P$ the non-flatness is verified by its own resolution $\epsilon_0^i$, and the non-flatness of all of $M\cap P$ is verified by the infimum of $\epsilon_0^i$ over all $i$. If the number of components $M^i$ is infinite, this infimum could be zero, so we require that $M\cap P$ has at most finitely many components. The second bullet point essentially means that if $M\subset \mathbb R^d$ has dimension  $d_1$, and $P\subset \mathbb R^d$ is a hyperplane of dimension $d_2$ with $d_1\leq d_2<d$, then $\text{dim}\,M\cap P<d_1$. In this sense, no hyperplane contains a substantial portion  of $M$ (a flat part of $M$).
        For example, it appears that if the manifold $M$ is of co-dimension $1$ in $\mathbb R^d$, then \eqref{nondegeneratecurvature} implies that Gaussian curvature defined by the second derivative does not become zero, which intuitively means no flat parts. Of course typical datasets may be concentrated on low dimensional manifolds, with hence larger co-dimension. 
         However, if $\text{codim}\, M>1$, nonzero curvature is not sufficient. For example, if $M$ is a circle in $\mathbb R^3$ lying  in a 2-dimensional plane, then $M$ is flat in the sense of \eqref{nondegeneratecurvature} even thought the curvature is non-zero. 
         While curvature conditions  are often  used in literature (e.g., \cite{Chen}) the curvature is a property of the manifold  while our  condition \eqref{dilatedcylinder} concerns both the manifold and the ambient space.}
     
       
        We emphasize in particular the importance of the last condition in \eqref{nondegeneratecurvature} which controls more than just the curvature of $M$ and guaranteed that it will extend in every direction.

        Assumption \eqref{nondegeneratecurvature} could be relaxed somewhat as it would not be a problem for example if the whole manifold was contained in some lower dimensional plane: Issues only occur if an arbitrarily small part of $M$ is flat while the whole manifold is not. Similarly the last condition in \eqref{nondegeneratecurvature} could be relaxed by asking that $(\partial_t f(t,y),\ldots, \partial^k_t f(t,y))$ spans $\R^d$ for $k$ large enough. As it is already technical, we preferred to keep the simpler Assumption \eqref{nondegeneratecurvature} in the context of this article.

        Finally we point out that the $f_i$ cannot in general be one to one, unless $M\cap P$ is of dimension exactly $d_0-1$. Otherwise we will always have pairs of points $y,\;y'\in [0,\ 1]^{d_0}$ s.t. $f_i(0,y)=f_i(0,y')$. The $f_i$ simply offer a convenient way of parametrizing $M$ in the neighborhood of $P$ by a set of 1-dimensional curves.  
        
        \begin{theorem}
Assume that the measure $\mu$ satisfied $\mu=C\,\mathcal{H}^{d_0}|_M$ where $M$ is non-degenerate with no flat parts in the sense of \eqref{nondegeneratecurvature}. Then $\mu$ satisfies \eqref{doublingstrip} for the corresponding $r$, $m_0=0$, $\delta=1$ and any $k$, any $\kappa$ and any $\sigma<\kappa^{1/d}$.
          \end{theorem}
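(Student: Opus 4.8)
The plan is to reduce inequality \eqref{doublingstrip} to an estimate near the affine plane $P:=s+\ker Q$ onto which $\varepsilon E$ collapses as $\varepsilon\to0$, then to a one-dimensional scaling computation along the curves $f_i$ provided by \eqref{nondegeneratecurvature}, and finally to integrate out the parameter $y$. Fix $Q$ with $\mathrm{rank}\,Q\le r$, vectors $v_1,\dots,v_k$, a point $s$ and thresholds $t_1,\dots,t_k$, and write $E=E(Q,s,v_1,t_1,\dots,v_k,t_k)$. If $M\cap P\cap\{x:v_j\cdot x\le t_j\ \forall j\}=\varnothing$, then the compact set $M$ lies at positive distance from that region, so $\varepsilon E\cap M=\varnothing$ for all small $\varepsilon$ and \eqref{doublingstrip} is trivial; hence assume this slice is non-empty and apply the non-degeneracy hypothesis \eqref{nondegeneratecurvature} to $P$ (a plane of codimension $\mathrm{rank}\,Q\le r$). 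This yields finitely many balls $B_i$, smooth maps $f_i:[0,1]^{d_0}\to B_i$ with $f_i(0,\cdot)$ parametrizing $M\cap P$, and measures $\nu_i$ with $\mathcal H^{d_0}|_M=\sum_i\int_{[0,1]^{d_0-1}}|\{t:f_i(t,y)\in\,\cdot\,\}|\,\nu_i(dy)$ on $\bigcup_iB_i$. Since $\{x:(x-s)^TQ(x-s)\le\varepsilon^2\}$ is a $C\varepsilon$-neighborhood of $s+\ker Q$, for $\varepsilon$ small we have $\varepsilon E\cap M\subset\bigcup_iB_i$, so $\mu(\varepsilon E)=C\sum_i\int_{[0,1]^{d_0-1}}\tau_i(\varepsilon,y)\,\nu_i(dy)$ with $\tau_i(\varepsilon,y):=\bigl|\{t\in[0,1]:f_i(t,y)\in\varepsilon E\}\bigr|$, and likewise for $\kappa\varepsilon$. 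The whole problem reduces to comparing $\tau_i(\kappa\varepsilon,y)$ with $\tau_i(\varepsilon,y)$.

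First I would carry out the curve-wise computation. Put $c_y(t)=f_i(t,y)$ and $h_y(t)=(c_y(t)-s)^TQ(c_y(t)-s)$. Since $c_y(0)-s\in\ker Q$ and $Q$ is symmetric, $h_y(t)=(c_y(t)-c_y(0))^TQ(c_y(t)-c_y(0))\ge0$. Let $\ell(y)\ge1$ be the least $j$ with $Q\,\partial_t^{j}c_y(0)\neq0$. Positive semidefiniteness gives $w^TQw>0$ whenever $Qw\neq0$, so a Taylor expansion produces $h_y(t)=a_y\,t^{2\ell(y)}+O(t^{2\ell(y)+1})$ with $a_y>0$; in particular $h_y$ is strictly increasing on a right neighborhood of $0$, and --- the half-space constraints being inactive at $t=0$ for $\nu_i$-a.e.\ $y$ --- one has $f_i(t,y)\in\varepsilon E\iff h_y(t)\le\varepsilon^2$ for $t$ small, whence $\tau_i(\varepsilon,y)=a_y^{-1/(2\ell(y))}\varepsilon^{1/\ell(y)}(1+o(1))$ and $\tau_i(\kappa\varepsilon,y)/\tau_i(\varepsilon,y)\to\kappa^{1/\ell(y)}$. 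The last line of \eqref{nondegeneratecurvature} forces $\partial_tc_y(0),\dots,\partial_t^{d}c_y(0)$ to be linearly independent, so they cannot all lie in the proper subspace $\ker Q$ (we may assume $Q\neq0$, the rank $0$ case being degenerate); hence $\ell(y)\le d$, indeed $\ell(y)\le d-\mathrm{rank}\,Q+1$. Consequently $\kappa^{1/\ell(y)}\ge\kappa^{1/d}$ for every $y$.

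Next I would pass to the integral. The function $\ell(\cdot)$ takes only the values $1,\dots,d$, and $\{y:\ell(y)\ge j\}=\{y:Q\,\partial_t^ic_y(0)=0\text{ for }i<j\}$ is closed; let $m\le d$ be the largest value taken on a set of positive $\nu_i$-mass, so that $Y:=\{y:\ell(y)=m\}$ is compact, $\ell\equiv m$ on it, $a_y\ge a_{\min}>0$ there, and the Taylor remainder is uniform over $Y$. The contributions of $\{\ell<m\}$ are self-limiting: there the leading term $a_yt^{2\ell(y)}$ of $h_y$ governs $\tau_i$ only below a scale that collapses exactly as $a_y\to0$ near the stratum boundary, and pushing this through --- using that $h_y=P_y^TQP_y+O(t^{2d+2})$ with $P_y$ the degree $2d$ Taylor polynomial of $c_y-c_y(0)$, so $h_y\ge0$ already at the polynomial level and with a uniformly positive $t^{2m}$-coefficient near $Y$ --- yields a \emph{uniform} bound $\tau_i(\varepsilon,y)\le C\varepsilon^{1/m}$, while $\int_Y\tau_i(\varepsilon,y)\,\nu_i(dy)\asymp\varepsilon^{1/m}$ dominates $\mu(\varepsilon E)$. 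Writing $\tau_i(\kappa\varepsilon,y)\ge\kappa^{1/m}\tau_i(\varepsilon,y)-\varepsilon\,\rho_i(\varepsilon,y)$ with $\rho_i\ge0$, $\rho_i/\tau_i\to0$ uniformly on $Y$ and bounded off $Y$, the ``error'' mass is $o(\mu(\varepsilon E))$, so $\mu(\kappa\varepsilon E)\ge\kappa^{1/m}\mu(\varepsilon E)(1+o(1))$. Since $1+\sigma<\kappa^{1/d}\le\kappa^{1/m}$, this exceeds $(1+\sigma)\mu(\varepsilon E)$ once $\varepsilon$ is below some $\varepsilon_0$ (take the finite minimum of the chart-wise thresholds), which is precisely \eqref{doublingstrip} with $m_0=0$, $\delta=1$. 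Truncation by the half-spaces, and the boundary of the slice inside $M\cap P$, perturb only a set of dimension $\le d_0-2$, hence contribute a strictly lower-order term and leave the exponent unchanged.

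The main obstacle is exactly this last step: controlling the whole family $\{h_y\}$ uniformly in $y$ while the vanishing order $\ell(y)$ jumps and the leading coefficient $a_y$ degenerates across the boundaries of the strata $\{\ell=j\}$ --- that is, upgrading the pointwise scaling law $\tau_i(\kappa\varepsilon,y)/\tau_i(\varepsilon,y)\to\kappa^{1/\ell(y)}$ to the integrated doubling inequality, uniformly over all admissible data $Q,v_j,s,t_j$. The geometric content --- that $\varepsilon E$ collapses onto $P$, that $a_y>0$ by positive semidefiniteness, and that $\ell(y)\le d$ by the transversality/curvature hypothesis --- is elementary by comparison, and this is where I expect the real work (a compactness/stratification, or resolution-type, argument) to reside.
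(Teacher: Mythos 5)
Your proposal follows essentially the same route as the paper's proof: you reduce to the affine plane $P$ onto which $\varepsilon E$ collapses, invoke the parametrization $f_i(t,y)$ from the non-degeneracy hypothesis \eqref{nondegeneratecurvature}, carry out the curve-wise Taylor expansion of $h_y(t)=(f_i(t,y)-s)^TQ(f_i(t,y)-s)$ to identify the vanishing order $\ell(y)$ (the paper's $k_0(y)$), derive the pointwise scaling $|\{t:f_i(t,y)\in E_{\kappa\varepsilon}\cap\mathcal C\}|/|\{t:f_i(t,y)\in E_\varepsilon\cap\mathcal C\}|\to\kappa^{1/\ell(y)}\ge\kappa^{1/d}$, and then integrate in $y$. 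The one place where the two treatments diverge is the passage to the integral: you correctly flag that $\bar\varepsilon(y)$ degenerates near the strata boundaries where $\ell(y)$ jumps, so there is no trivial uniform $\varepsilon_0$, and you propose a stratification/compactness argument to control the contribution of the lower strata; the paper disposes of the same step tersely by asserting that $\bar\varepsilon$ depends continuously on $y$ and invoking dominated convergence to get $\liminf_{\varepsilon\to0}\mu(E_{\kappa\varepsilon}\cap\mathcal C)/\mu(E_\varepsilon\cap\mathcal C)\ge\kappa^{1/d}$. Your instinct that this is the genuinely delicate step is sound --- the paper's dominated-convergence sentence is the least fleshed-out part of its argument --- but the underlying strategy and all of the geometric ingredients (collapse onto $P$, positive semi-definiteness giving $\gamma>0$, transversality giving $\ell(y)\le d$, treatment of the half-space constraints via the first nonvanishing derivative) coincide with the paper's.
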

        \begin{proof}
         Choose any basic truncated ellipsoidal cylinders, {\em i.e.} parameters $Q,v_1,\ldots,v_k,s,t_1,\ldots, t_k$. Without loss of generality, we may assume that $s=0$. Moreover since the problem is invariant by rotation, we may also assume that $Q$ can be diagonalized according to the basis,
\[
x^T\,Q\,x=\lambda_1\,x_1^2+\ldots+\lambda_l\,x_l^2,
\]
with in particular $\lambda_{l+1}=\ldots=\lambda_d=0$ and $\lambda_1,\ldots,\lambda_l>0$ {since $Q$ is positive semi-definite}. This leaves the possibility that the ellipsoid extends over the whole domain in the $d-l$ last directions. We denote by $P$ the plane of equation $x_1=\ldots=x_l=0$ of dimension $d-l$. Since $l\leq r$, $d-l\geq d-r$ and \eqref{nondegeneratecurvature} applies to $M$ and $P$.

%
First notice that if $v_{i,l+1},\ldots, v_{i,d+1}=0$ and $t_i\neq 0$, we observe that if $x\in \kappa\,\eps\,E$ then
\[
\sum_{i\leq l} x_i^2\leq \frac{\kappa^2\,\eps^2}{\lambda},\quad \lambda=\inf_{i\leq l} \lambda_i.
\]
Therefore as $\eps\to 0$ then $v_i\cdot x\to 0$ and that implies that the constraint $v_i\cdot x\leq t_i$ is either always satisfied if $t_i>0$ or never satisfied if $t_i<0$. Consequently we may freely limit ourselves to consider the constraints with $t_i=0$ in that case.

Moreover this implies that $E_{\kappa\,\eps}\subset \{x\,|\;d(x,P)\leq \kappa\,\eps/\sqrt{\lambda}\}$. This leads us to only consider $\eps\leq \sqrt{\lambda}\,\bar\eps$ so that \eqref{nondegeneratecurvature} provides an adequate parametrization of $M\cap E_{\kappa\,\eps}$ around a finite number of components which we can consider independently.

Denote by $\mathcal{C}$ the subset of $x\in \R^d$ s.t. $x\cdot v_i\leq t_i$ {for all $i=1,\cdots,k$}. Further denote for simplicity $E_\eps=\{x\,|\;x^T\,Q\,x\leq \eps^2\}$. Because of the third point in \eqref{nondegeneratecurvature} is to estimate for every $y$ the quantity $|\{t\,|\;f_i(t,y)\in E_\eps\cap \mathcal{C}\}$ in terms of $\eps$.

Fix any $y$ and use the last point in condition~\eqref{nondegeneratecurvature} and denote by $k_0=k_0(y)$ the first index s.t. $\partial^{k_0}_tf_{i,j}(0,y)\neq 0$ for some $j\leq l$ where $f_{i,j}$ is the $j$-th coordinate of function $f_i$. We always have $k_0\leq d$ because of \eqref{nondegeneratecurvature}.

Observe that since $f_i(0,y)\in P$ by the second point in \eqref{nondegeneratecurvature}, with this index $k_0$, one has that for some constant $C$ independent of $y$ and $t$
\begin{equation}\label{eq:taylor_expansion_of_M}
|f_i(t,y)^T\, Q\,f_i(t,y)-\gamma\,t^{2k_0}|\leq C\,t^{2k_0+1},\quad \gamma=\frac{1}{(k_0!)^2}\,(f_i^{k_0}(0,y))^T\,Q\,f_i^{k_0}(0,y)>\lambda\,\sum_{i=1}^l |f^{k_0}_i(0,y)|^2.
\end{equation}
From \eqref{eq:taylor_expansion_of_M}, we observe that $\gamma t^{2k_0}-Ct^{2k_0+1}\leq f_i(t,y)^T Q f_i(t,y)$. Let
\begin{equation}
a(\varepsilon)=\left(\frac{2\varepsilon^2}{\gamma}\right)^{\frac{1}{2k_0}}.
\end{equation}
If $\varepsilon$ is sufficiently small, then $a(\varepsilon)<\gamma/2C$. If $a(\varepsilon)<t<\gamma/2C$, then
\begin{align*}
f_i(t,y)^T Q f_i(t,y)&\geq\gamma t^{2k_0}-Ct^{2k_0+1}=t^{2k_0}\,(\gamma-C\,t)\geq \frac{\gamma}{2} t^{2k_0}\\
&\geq \frac{\gamma}{2} (a(\eps))^{2k_0}=\varepsilon^2.
\end{align*}


This first shows that if $t\leq \gamma/2C$  and $t\geq a(\eps)=(2\,\eps^2/\gamma)^{1/2k_0}$ then $f_i(t,y)$ cannot belong to $E_\eps$. The same argument can be applied to any point $(t_0,y)$ instead of $(0,y)$ of course. This proves in particular that $f_i(t,y)$ only intersects $P$ at a finite number of points and that $\{t\,|\;f_i(t,y)\in E_\eps\}$  is composed of a finite union of intervals centered around each of those points. From the second point in condition \eqref{nondegeneratecurvature}, each of points $(t_0,y)$ s.t. $f_i(t,y)\in P$ is of the form $(0,y')$ for some other $y'$. As a consequence we may freely assume that $t\leq \gamma/2C$ and therefore that $t\geq (2\,\eps^2/\gamma)^{1/2k_0}$ if $f_i(t,y)\in E_\eps$.

We can then easily obtain a precise characterization of $\{t\,|\;f_i(t,y)\in E_\eps\}$ from the asymptotic expansion above with
\begin{equation}
  \begin{split}
    &f_i(t,y)\in \{t\,|\;f_i(t,y)\in E_\eps\}\implies |t|\leq (2\eps^2/\gamma)^{1/2k_0}+C\,\eps^{2/k_0},\\
& |t|\leq (2\eps/\gamma)^{1/2k_0}-C\,\eps^{2/k_0}\implies f_i(t,y)\in \{t\,|\;f_i(t,y)\in E_\eps\}. 
  \end{split}
    \label{measureEeps}
\end{equation}
We now come back to any condition $f(t,y)\cdot v_j\leq t_j$, we see that if $f_i(0,y)\cdot v_j\neq t_j$ then this condition is either automatically satisfied or never satisfied for $\eps$ small enough. Denote by $\bar\eps(y)$ the corresponding cap on $\eps$. Consequently, if $\eps\leq \bar\eps(y)$, we may freely assume that $f_i(0,y)\cdot v_j=t_j$ for all $j$. 

A similar
consequence is that we can characterize easily the part of set $\mathcal{C}$ that is within $E_{\eps}$ for any $\eps$. Given any $v_j$, we denote by $k_j=k_j(y)$ the first index s.t. $\partial_t^{k_j}f_i(0,y)\cdot v_j\neq 0$, which again exists thanks to the last point in \eqref{nondegeneratecurvature}. Then we have as before that
\[
\left|v_j\cdot f_i(t,y)-t_j-\frac{\partial_t^{k_j}f_i(0,y)\cdot v_j}{k_j!}\,y^{k_j}\right|\leq C\,y^{k_j+1},
\]
where $C$ is again independent of $t$ and $y$.

This implies that for $\eps\leq \bar\eps(y)$ small enough, $\{t\,|\;f_i(t,y)\cdot v_j\leq t_j\}\cap [-(2\eps^2/\gamma)^{1/2k_0},\ (2\eps^2/\gamma)^{1/2k_0}]$ is exactly an interval $I_{j,\eps}=I_{j,\eps}(y)$ with
\begin{itemize}
  \item $I_{j,\eps}(y)=[0,\ (2\eps^2/\gamma)^{1/2k_0}]$ if $k_j(y)$ is odd and $\partial_t^{k_j}f_i(0,y)\cdot v_j<0$;
  \item $I_{j,\eps}=[-(2\eps^2/\gamma)^{1/2k_0},\ 0]$ if $k_j(y)$ is odd and $\partial_t^{k_j}f_i(0,y)\cdot v_j>0$;
  \item $I_{j,\eps}=[-(2\eps^2/\gamma)^{1/2k_0},\ (2\eps^2/\gamma)^{1/2k_0}]$ if $k_j(y)$ is even and $\partial_t^{k_j}f_i(0,y)\cdot v_j<0$;
  \item $I_{j,\eps}=\{0\}$ if $k_j(y)$ is even and $\partial_t^{k_j}f_i(0,y)\cdot v_j<0$.
\end{itemize}
Denoting by $I_\eps(y)$ the intersection of all $I_{j,\eps}$ we have that $\{t\,|\;f_i(t,y)\in \mathcal{C}\}\cap [-(2\eps^2/\gamma)^{1/2k_0},\ (2\eps^2/\gamma)^{1/2k_0}]$ is precisely the interval $I_\eps$. Moreover $I_\eps$ is either $\{0\}$ (in which case we have nothing to prove) or one of $[0,\ (2\eps^2/\gamma)^{1/2k_0}]$, $[-(2\eps^2/\gamma)^{1/2k_0},\ 0]$, $[-(2\eps^2/\gamma)^{1/2k_0},\ (2\eps^2/\gamma)^{1/2k_0}]$. And we finally emphasize that the choice is the same for all $\eps$ small enough and for a fixed $y$.

Therefore in the case where $I_\eps(y)$ is not reduced to $\{0\}$, we have that either
\[
\begin{split}
&|\{t\,|\;f_i(t,y)\in E_\eps\cap \mathcal{C}\}|=(2\eps^2/\gamma)^{1/2k_0}+O(\eps^{2/k_0}),\quad \forall \eps<\bar \eps(y),\\
&\mbox{or}\ |\{t\,|\;f_i(t,y)\in E_\eps\cap \mathcal{C}\}|=2\,(2\eps^2/\gamma)^{1/2k_0}+O(\eps^{2/k_0}),\quad \forall \eps<\bar \eps(y).
\end{split}
\]
In both cases, for any $\sigma'<\kappa^{1/d}$ and therefore $\sigma'<\kappa^{1/k_0}$ , then provided $\eps\leq \bar\eps(y)$ and $\eps\leq (\kappa^{1/k_0}-\sigma)^{k_0}/C$ for some $C$,
\begin{equation}
|\{t\,|\;f_i(t,y)\in E_{\kappa\,\eps}\cap \mathcal{C}\}|\geq \sigma\,|\{t\,|\;f_i(t,y)\in E_\eps\cap \mathcal{C}\}|.\label{boundsegment}
\end{equation}
Applying the third point in \eqref{nondegeneratecurvature} and the assumption on $\mu$, we have that
\[
\begin{split}
  &\mu(E_\eps\cap \mathcal{C})=C\,\mathcal{H}^{d_0}|_M(E_\eps\cap \mathcal{C}) = C\,\int_{[0,\ 1]^{d_0-1}} |\{t\,|\; f_i(t,y)\in O\}|\,\nu_i(dy),\\
  &\mu(E_{\kappa\,\eps}\cap \mathcal{C})=C\,\mathcal{H}^{d_0}|_M(E_{\kappa\,\eps}\cap \mathcal{C}) = C\,\int_{[0,\ 1]^{d_0-1}} |\{t\,|\; f_i(t,y)\in O\}|\,\nu_i(dy).
\end{split}
\]
We just have to be careful about the condition $\eps\leq \bar \eps(y)$ when applying \eqref{boundsegment} to the equalities above as $\bar\eps$ depends (continuously) on $y$. By the dominated convergence theorem and \eqref{boundsegment}, we trivially have that
\[
\liminf_{\eps\to 0}\frac{\mu(E_\eps\cap \mathcal{C})}{\mu(E_{\kappa\,\eps}\cap \mathcal{C})}>\sigma',
\]
and therefore for any $\sigma<\sigma'$, there exists some $\eps_0$ s.t. if $\eps<\eps_0$ then
\[
\mu(E_\eps\cap \mathcal{C})\geq \sigma\;\mu(E_{\kappa\,\eps}\cap \mathcal{C}),
\]
finishing the proof.
        \end{proof}
        
We denote by $L:\;\R^N\to\R^d$ any non-linear function that is a combination of a shift, linear operation and as a non-linear function the absolute value; namely
\begin{equation}
L(x)_i=\left|\sum_{j=1}^N M_{ij} (x_j+s_j)\right|,\label{layerfunction}
  \end{equation}
where $s\in\R^N$ is the shift and $M\in M_{N,d}(\R)$ is a matrix.

We then have Theorem \ref{thm:data_clusters_to_confidences} as a consequence of 
\begin{theorem}\label{thm:propogation_of_cond_B}
Assume that the measure $\mu$ satisfies \eqref{doublingstrip} and $L$ is given by \eqref{layerfunction}. Then the pushforward $L_\#\mu$ also satisfies \eqref{doublingstrip} though with the new constants $m_0',\,\delta',\,\sigma',\;\kappa'$ but the same $k$ and $r$.
\end{theorem}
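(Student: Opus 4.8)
The plan is to factor the layer map as $L=\mathrm{abs}\circ A$, where $A:\R^N\to\R^d$ is the affine map $A(x)=M(x+s)$ and $\mathrm{abs}$ is the componentwise absolute value, so that $L_\#\mu=\mathrm{abs}_\#(A_\#\mu)$ and $(L_\#\mu)(F)=\mu\bigl(A^{-1}(\mathrm{abs}^{-1}(F))\bigr)$ for every test set $F$. Since \eqref{doublingstrip} only compares $\mu$ of the preimages of $\eps E$ and $\kappa\eps E$ for truncated ellipsoidal cylinders $E$ of rank $\le r$, the entire proof reduces to tracking what happens to such a cylinder when it is pulled back first through $\mathrm{abs}$ and then through $A$, using the two structural facts recalled above (linear images of truncated ellipsoidal cylinders are truncated ellipsoidal cylinders, and $\mathrm{abs}$-preimages of them split into finitely many truncated ellipsoidal cylinders).

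For the nonlinearity, since $\mathrm{abs}$ takes values in $\R^d_{\ge0}$ we have $\mathrm{abs}^{-1}(\eps E)=\bigcup_{\sigma\in\{\pm1\}^d}\sigma\cdot(\eps E\cap\R^d_{\ge0})$, and each piece is again a truncated ellipsoidal cylinder $\eps E_\sigma$: intersecting with $\R^d_{\ge0}$ adds at most $d$ half-space constraints, and the orthogonal reflection $\sigma$ sends the defining matrix $Q$ to $\sigma Q\sigma$, still positive semidefinite of the same rank, and merely reflects the half-spaces. The cylinders $E_\sigma$ do not depend on $\eps$ or $\kappa$, so $\mathrm{abs}^{-1}(\kappa\eps E)=\bigcup_\sigma\kappa\eps E_\sigma$ with the very same $E_\sigma$. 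This is where the constants must degrade, as one cylinder is replaced by up to $2^d$ of them: we will only recover an inequality with $1+\sigma'=(1+\sigma)/2^d$, which forces the hypothesis $\sigma>2^d-1$ --- harmless in the iterated setting of Theorem~\ref{thm:data_clusters_to_confidences}, where the constant $\sigma'$ demanded of the data is allowed to be as large as needed (alternatively, applying the doubling $m$ times replaces $\kappa$ by $\kappa^m$ and $1+\sigma$ by roughly $(1+\sigma)^m$, at the cost of a modestly larger $m_0$, which also overcomes the factor $2^d$).

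Next I pull each $\eps E_\sigma$ back through $A(x)=Mx+b$. A half-space $v\cdot y\le t$ becomes $(M^Tv)\cdot x\le t-v\cdot b$, again a half-space (vacuous or contradictory when $M^Tv=0$, which only makes the preimage trivial or empty). For the ellipsoidal part one examines $g(x)=(Mx+w_\sigma)^TQ_\sigma(Mx+w_\sigma)$, a convex quadratic in $x$ with matrix $M^TQ_\sigma M$ of rank at most $\mathrm{rank}\,Q_\sigma\le r$. The crucial point --- the one that requires care --- is that since $Q_\sigma$ is positive semidefinite, $g$ is a sum of squares, hence bounded below by a constant $c_\sigma\ge0$ depending only on $E_\sigma$ and $A$; therefore completing the square leaves \emph{no residual linear term}, and $\{g\le\rho^2\}$ is either empty (when $\rho^2<c_\sigma$) or a cleanly rescaled truncated ellipsoidal cylinder $\{(x-x_*)^T(M^TQ_\sigma M)(x-x_*)\le\rho^2-c_\sigma\}$. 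Hence, for all $\eps$ below a threshold uniform over the finitely many $\sigma$, either $A^{-1}(\eps E_\sigma)=A^{-1}(\kappa\eps E_\sigma)=\emptyset$ (if $c_\sigma>0$), or $A^{-1}(\eps E_\sigma)=\eps E_\sigma'$ and $A^{-1}(\kappa\eps E_\sigma)=\kappa\eps E_\sigma'$ for one fixed truncated ellipsoidal cylinder $E_\sigma'$ in $\R^N$ of rank $\le r$ --- here using that a dilation scales the quadratic but fixes the truncating half-spaces.

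Finally, letting $\Sigma_0$ be the set of surviving signs, for $\eps$ small we get $(L_\#\mu)(\eps E)=\mu\bigl(\bigcup_{\sigma\in\Sigma_0}\eps E_\sigma'\bigr)\le 2^d\max_{\sigma\in\Sigma_0}\mu(\eps E_\sigma')$ and $(L_\#\mu)(\kappa\eps E)\ge\max_{\sigma\in\Sigma_0}\mu(\kappa\eps E_\sigma')$. Choosing $\sigma^*$ to maximize $\mu(\eps E_\sigma')$ and applying \eqref{doublingstrip} for $\mu$ to the rank-$\le r$ cylinder $E_{\sigma^*}'$ (legitimate once $\eps$ is below the minimum of the finitely many thresholds that condition provides) yields $(L_\#\mu)(\kappa\eps E)\ge\mu(\kappa\eps E_{\sigma^*}')\ge\min\{\delta,\max\{m_0,(1+\sigma)\mu(\eps E_{\sigma^*}')\}-m_0\}\ge\min\{\delta,\max\{m_0,(1+\sigma')(L_\#\mu)(\eps E)\}-m_0\}$ with $1+\sigma'=(1+\sigma)/2^d$; the empty or null-measure cases are trivial since the right-hand side is then $0$. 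Thus \eqref{doublingstrip} propagates to $L_\#\mu$ with $\kappa'=\kappa$, $\delta'=\delta$, $m_0'=m_0$, $\sigma'=(1+\sigma)/2^d-1$, and the same rank $r$, the only further change being that the number of truncating half-spaces grows by the layer width $d$ (absorbed into the constant $k'$ of Theorem~\ref{thm:data_clusters_to_confidences}). The main obstacle is the affine step --- ruling out a surviving linear term upon completing the square, which is exactly where positive semidefiniteness of $Q$ enters --- together with producing a single $\eps$-threshold valid simultaneously across all orthant pieces and both sides of the $c_\sigma=0$ dichotomy; the remainder is constant-bookkeeping.
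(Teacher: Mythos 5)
Your proof follows the same high-level strategy as the paper: factor $L$ as an affine map followed by the componentwise absolute value, observe that linear pullbacks of truncated ellipsoidal cylinders are again truncated ellipsoidal cylinders of the same rank, and that $\mathrm{abs}$-preimages decompose into finitely many such cylinders, then push the doubling inequality \eqref{doublingstrip} through both pieces. Your treatment of the affine step is sound and in fact slightly more explicit than the paper's: the observation that positive semidefiniteness of $Q$ rules out a residual linear term after completing the square, giving a clean dichotomy (both preimages empty for $\eps$ small, or a single fixed cylinder $E'_\sigma$ scaling correctly in $\eps$), is exactly what justifies the paper's Proposition~\ref{propdoublingstrip}, where the assumption $s = Ms'$ is left implicit.

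The genuine gap is in the absolute-value step. By taking the full $d$-coordinate absolute value at once and bounding the measure of the union of $2^d$ orthant pieces crudely by $2^d$ times the maximum, you obtain $1+\sigma' = (1+\sigma)/2^d$, so the conclusion is vacuous unless $\sigma > 2^d - 1$. The theorem's hypothesis places no lower bound on $\sigma>0$, so this is not a reformulation, it is a loss: if $\mu$ satisfies \eqref{doublingstrip} with a small $\sigma$, your argument does not establish the conclusion. The paper avoids this entirely in Proposition~\ref{propabsolute} by taking the absolute value of one coordinate at a time (so only two pieces $\bar E,\bar E'$), and, rather than passing to a maximum, running a four-way case analysis together with an averaging trick: when both pieces have mass between $m_0$ and $\delta$, writing $\mu(A^{-1}(\kappa\eps E))=\mu(E^+_{\kappa\eps})+\mu(E^0_{\kappa\eps})+\mu(E^-_{\kappa\eps})$ and combining the two doubling inequalities for $\bar E$ and $\bar E'$ gives the lower bound $(1+\sigma/2)\,\mu(A^{-1}(\eps E))-m_0$, i.e.\ $\sigma'=\sigma/2$, valid for every $\sigma>0$. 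Iterating this coordinate-by-coordinate degrades $\sigma$ geometrically but never to a nonpositive value, which is exactly what the inductive application in Theorem~\ref{thm:data_clusters_to_confidences} needs. Your two proposed workarounds (demand $\sigma>2^d-1$ of the data, or iterate the doubling $m$ times to replace $\kappa,1+\sigma$ by $\kappa^m,(1+\sigma)^m$) are plausible but not proved; the second in particular needs a careful re-derivation of the $m_0$ and $\delta$ terms after iterating the $\min/\max$ structure of \eqref{doublingstrip}, and as sketched it does not close the argument. To match the paper's strength you should either replace the crude $2^d\max$ bound with a per-coordinate splitting and the averaging estimate, or carry out the $m$-fold iteration in full and check that the resulting $m_0'$, $\delta'$ remain admissible.
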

We recall that $L_\#\mu$ is defined by $L_\#\mu(O)=\mu(L^{-1}(O))$. 

\begin{remark}\label{rmk:propagation_of_condition_B}
	Theorem \ref{thm:propogation_of_cond_B} allows us to propagate condition B backwards. That is, we transfer condition B on the values taken by the last layer of a DNN before softmax to a similar condition on the second to last layer, then the third to last layer, until we reach a condition on the training set.
\end{remark}

To prove Theorem \ref{thm:propogation_of_cond_B}, we decompose $L$ into a linear part and the absolute value with propositions on each.
\begin{proposition}
Assume that the measure $\mu$ satisfies \eqref{doublingstrip} and that $M\in M_{N,d}(\R)$. Then the pushforward $M_\#\mu$ also satisfies \eqref{doublingstrip} with the same constants.\label{propdoublingstrip}
  \end{proposition}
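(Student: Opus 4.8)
The plan is to reduce the statement to a purely set-theoretic fact about preimages: the $M$-preimage of a dilated truncated ellipsoidal cylinder in the target space is again a dilated truncated ellipsoidal cylinder in $\R^N$, with the \emph{same} dilation factor, the \emph{same} rank bound $r$, and at most the same number $k$ of linear constraints. Granting this, \eqref{doublingstrip} for $M_\#\mu$ follows at once by applying \eqref{doublingstrip} for $\mu$ to the pulled-back cylinder, since $M_\#\mu(\eps E)=\mu(M^{-1}(\eps E))$.

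So first I would take a truncated ellipsoidal cylinder $E=E(Q,s',v_1,t_1,\ldots,v_k,t_k)$ in the target space with $\operatorname{rank}Q\le r$ and compute its preimage. Because dilation touches only the quadratic part, $M^{-1}(\eps E)=\{x\in\R^N:(Mx-s')^{T}Q(Mx-s')\le\eps^{2}\ \text{and}\ (M^{T}v_i)\cdot x\le t_i\ \forall i\}$; the linear constraints pull back to $(M^{T}v_i)\cdot x\le t_i$ with the \emph{same} thresholds, which matches the fact that dilation leaves linear constraints untouched. For the quadratic part, write $Q=R^{T}R$; then $x\mapsto(Mx-s')^{T}Q(Mx-s')=\|RMx-Rs'\|^{2}$ is a nonnegative convex quadratic, hence attains its minimum $c_0=\operatorname{dist}(Rs',\operatorname{range}(RM))^{2}\ge0$ at some $\tilde s\in\R^N$, and a Taylor expansion about $\tilde s$ gives $(Mx-s')^{T}Q(Mx-s')=c_0+(x-\tilde s)^{T}\tilde Q(x-\tilde s)$ with $\tilde Q:=M^{T}QM$ symmetric positive semi-definite and $\operatorname{rank}\tilde Q\le\operatorname{rank}Q\le r$. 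If $c_0>0$ then, since $\tilde Q\succeq0$, the quadratic constraint is infeasible once $\kappa^{2}\eps^{2}<c_0$, so $M^{-1}(\eps E)=M^{-1}(\kappa\eps E)=\emptyset$ for $\eps$ small and \eqref{doublingstrip} for $M_\#\mu$ holds trivially (both sides vanish). If $c_0=0$ then $M^{-1}(\eps E)=\eps\tilde E$ exactly, where $\tilde E:=E(\tilde Q,\tilde s,M^{T}v_1,t_1,\ldots,M^{T}v_k,t_k)$, after discarding any constraint with $M^{T}v_i=0$ and $t_i\ge0$ (vacuous) and observing that a constraint with $M^{T}v_i=0$ and $t_i<0$ forces the empty set (trivial). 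This leaves a truncated ellipsoidal cylinder with at most $k$ constraints and rank $\le r$.

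Finally I would invoke \eqref{doublingstrip} for $\mu$ applied to the single cylinder $\tilde E$: this yields $\eps_0>0$ with $\mu(\kappa\eps\tilde E)\ge\min\{\delta,\max\{m_0,(1+\sigma)\mu(\eps\tilde E)\}-m_0\}$ for all $\eps\le\eps_0$, and since $\mu(\eps\tilde E)=M_\#\mu(\eps E)$ and $\mu(\kappa\eps\tilde E)=M_\#\mu(\kappa\eps E)$ this is precisely \eqref{doublingstrip} for $M_\#\mu$ with the same constants $r,k,m_0,\kappa,\sigma,\delta$. I expect the only delicate point — hence the main obstacle — to be the bookkeeping of the linear constraints so that the number $k$ is honestly preserved: discarding vacuous constraints only decreases it, so this is harmless if \eqref{doublingstrip} is read as holding for every number of constraints up to the given value; if one insists on a literally fixed $k$, one must re-pad with redundant linear constraints that are inactive at small scales (a halfspace whose normal lies in $\operatorname{range}\tilde Q$ and whose boundary strictly misses $\tilde s$ is eventually satisfied, since dilation collapses the ellipsoidal part toward $\tilde s+\ker\tilde Q$), and treat separately the fully degenerate subcase $\tilde Q=0$ with every $M^{T}v_i=0$, where $M^{-1}(\eps E)$ is a fixed polyhedron or all of $\R^N$ and the inequality reduces to one already granted by \eqref{doublingstrip} for $\mu$ (using $\delta\le1$ in the latter case). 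Everything else is the elementary identity $\tilde Q=M^{T}QM$ together with the observation $c_0=\operatorname{dist}(Rs',\operatorname{range}(RM))^{2}\ge0$.
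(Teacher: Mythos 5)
Your proposal is correct and takes essentially the same route as the paper: pull back the truncated ellipsoidal cylinder through $M$, obtaining a cylinder with quadratic form $\tilde Q=M^{T}QM$ (which is still symmetric, positive semi-definite, and of rank at most $r$) and linear constraints $M^{T}v_i\cdot x\le t_i$, and then apply the doubling condition for $\mu$ to this pulled-back cylinder. The one place where you go beyond the paper is genuinely useful: the paper writes the translation as $s=Ms'$ without comment, i.e.\ it silently assumes the center lies in the range of $M$, whereas you correctly observe that the relevant condition is $Rs'\in\operatorname{range}(RM)$ where $Q=R^{T}R$, and that when the minimum $c_0$ of the pulled-back quadratic is positive the preimages of $\eps E$ and $\kappa\eps E$ are both empty for small $\eps$, so the inequality holds trivially. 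Your bookkeeping about dropping vacuous constraints $M^{T}v_i=0$ and re-padding to restore the count $k$ is more elaborate than what the paper does (the paper simply carries the possibly-zero vectors $M^{T}v_i$ along), but it is a reasonable way to keep the cylinder class literally unchanged; the point is essentially cosmetic, since duplicating an existing constraint or adding an eventually-inactive halfspace recovers a cylinder with exactly $k$ constraints, and the fully degenerate case $\tilde Q=0$ with all constraints vacuous gives $\mu(\R^N)=1\ge\delta$ so the inequality is again immediate. In short, same decomposition and same key identity $\tilde Q=M^{T}QM$, with your version filling a small gap the paper leaves implicit.
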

\begin{proof}
  We simply observe that if $v_i\cdot x\leq t_i$ then any $y$ s.t. $M\,y=x$ also satisfies that
  \[
(M^T\,v_i)\cdot y\leq t_i. 
\]
Similarly if $(x-s)^T\,Q\,(x-s){\leq \eps}$ and $s=M\,s'$ then any $y$ s.t. $M\,y=x$ satisfies that
\[
(y-s')^T\,(M^T\,Q\,M)\,(y-s'){\leq \eps}.
\]
Of course $M^T\,Q\,M$ is automatically symmetric and positive semi-definite (for any $y$, $y^T\,M^T\,Q\,M\,y=(M\,y)^T\,Q\,(M\,y)\geq 0$). Moreover $\mbox{rank}\,(M^T\,Q\,M)\leq \mbox{rank}\,Q\leq r$. 
 Hence
\[
\begin{split}
  &M_\# \mu\left(\eps\,E(Q,s,v_1,t_1,\ldots,v_k,t_k)\right)=M_\# \mu\left(\{x,\;(x-s)^T\,Q\,(x-s)\leq \eps,\ v_i\cdot x\leq t_i, i=1\dots k\}\right)\\
  \  &=\mu\left(\{y,\;(y-s')^T\,(M^T\,Q\,M)\,(y-s')\leq \eps,\ (M^T\,v_i)\cdot x\leq t_i, i=1\dots k\}\right)=\mu\left(\eps\,E(M^T\,Q\,M,s',M^T\,v_1,t_1,\ldots,M^T\,v_k,t_k) \right).\\
\end{split}
\]
And similarly
\[
M_\# \mu\left(\kappa\,\eps\,E(Q,s,v_1,t_1,\ldots,v_k,t_k)\right)= \mu\left(\kappa\,\eps\,E(M^T\,Q\,M,s',M^T\,v_1,t_1,\ldots,M^T\,v_k,t_k) \right). 
\]
Since \eqref{doublingstrip} holds on $\mu$ for $E(M^T\,Q\,M,s',M^T\,v_1,t_1,\ldots,M^T\,v_k,t_k)$ for $\eps<\eps_0$, it trivially holds on $M_\#\,\mu$ on $E(Q,s,v_1,t_1,\ldots,v_k,t_k)$ for the same $k$, $m_0$, $\delta$, $\kappa,\;\sigma$ and even $\eps<\eps_0$. 
\end{proof}

The second and last part consists in handling the absolute value with
\begin{proposition}
Denote by $A:\,\R^N\to\R^N$ the absolute value function $A(x)=(|x_1|,x_2\ldots,x_N)$. Assume that $\mu$ solves \eqref{doublingstrip} for some $k$, $\kappa$, $\sigma$, $m_0$ and $\delta$.  Then $A_\#\,\mu$ solves \eqref{doublingstrip} wit

h the new constants $k'=k-1$, $m_0'=2\,m_0$, $\sigma'=\sigma/2$ but the same $r,\;\kappa,\;\delta$. Furthermore if $\mu(\{x_1=0\}=0)$ then we may take $\sigma'=\sigma$.
\label{propabsolute}
\end{proposition}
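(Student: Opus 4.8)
The plan is to deduce \eqref{doublingstrip} for the pushforward $A_\#\mu$ from the same property of $\mu$, using the elementary fact that the preimage of a truncated ellipsoidal cylinder under the single‑coordinate absolute value $A(x)=(|x_1|,x_2,\ldots,x_N)$ is the union of two truncated ellipsoidal cylinders. Fix a truncated ellipsoidal cylinder $E'=E'(Q',s',v_1,t_1,\ldots,v_{k-1},t_{k-1})$ in $\R^N$ with $\mathrm{rank}\,Q'\le r$ and $k'=k-1$ linear constraints; we must establish \eqref{doublingstrip} for $A_\#\mu$ on $\eps E'$ with the new constants. Writing $R=\mathrm{diag}(-1,1,\ldots,1)$ and using that $A$ is the identity on $\{y_1\ge 0\}$ and equals $y\mapsto Ry$ on $\{y_1\le 0\}$, for any set $F$
\begin{equation*}
A^{-1}(F)=\bigl(F\cap\{y_1\ge 0\}\bigr)\cup\bigl(RF\cap\{y_1\le 0\}\bigr),
\end{equation*}
the two pieces overlapping only on $\{y_1=0\}$, where each equals $F\cap\{y_1=0\}$. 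For $F=\eps E'$ this reads $A^{-1}(\eps E')=\eps P_1\cup\eps P_2$, where
\begin{equation*}
P_1=E'(Q',s',v_1,t_1,\ldots,v_{k-1},t_{k-1},-e_1,0),\qquad P_2=E'(RQ'R,Rs',Rv_1,t_1,\ldots,Rv_{k-1},t_{k-1},e_1,0)
\end{equation*}
are truncated ellipsoidal cylinders with $k$ linear constraints each. Here $RQ'R$ is symmetric positive semidefinite with $\mathrm{rank}\,RQ'R=\mathrm{rank}\,Q'\le r$, and since the dilation in \eqref{dilatedcylinder} only widens the quadratic part while leaving the linear constraints fixed, it commutes with intersecting by $\{\pm y_1\ge 0\}$ and with $R$; hence also $A^{-1}(\kappa\eps E')=\kappa\eps P_1\cup\kappa\eps P_2$. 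In particular $r$ and $\kappa$ pass through unchanged, and we take $\delta'=\delta$.

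Next I would apply \eqref{doublingstrip} for $\mu$ to the two admissible cylinders $P_1,P_2$ (each of rank $\le r$ and with $k$ constraints), obtaining thresholds $\eps_0^{(1)},\eps_0^{(2)}$ and, for every $\eps\le\eps_0:=\min\{\eps_0^{(1)},\eps_0^{(2)}\}$,
\begin{equation*}
\mu(\kappa\eps P_i)\ \ge\ \min\Bigl\{\delta,\ \max\{m_0,\,(1+\sigma)\,\mu(\eps P_i)\}-m_0\Bigr\},\qquad i=1,2.
\end{equation*}
In the clean case $\mu(\{x_1=0\})=0$ the two pieces of $A^{-1}$ are $\mu$‑essentially disjoint, so $A_\#\mu(\eps E')=\mu(\eps P_1)+\mu(\eps P_2)$, and likewise at scale $\kappa\eps$; adding the two displayed inequalities and using $\max\{m,x\}-m=\max\{0,x-m\}$, the superadditivity $\max\{0,u\}+\max\{0,w\}\ge u+w$, and a short dichotomy on whether either summand reaches the cap $\delta$, gives
\begin{equation*}
A_\#\mu(\kappa\eps E')\ \ge\ \min\Bigl\{\delta,\ \max\{2m_0,\,(1+\sigma)\,A_\#\mu(\eps E')\}-2m_0\Bigr\},
\end{equation*}
that is, \eqref{doublingstrip} for $A_\#\mu$ with $k'=k-1$, $m_0'=2m_0$, $\sigma'=\sigma$, $\delta'=\delta$; this proves the last assertion of the proposition.

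In the general case I would carry the overlap along: with $c_\eps=\mu(\eps E'\cap\{y_1=0\})$ and $c'_\eps=\mu(\kappa\eps E'\cap\{y_1=0\})$ one has $A_\#\mu(\eps E')=\mu(\eps P_1)+\mu(\eps P_2)-c_\eps$ and $A_\#\mu(\kappa\eps E')=\mu(\kappa\eps P_1)+\mu(\kappa\eps P_2)-c'_\eps$, with $0\le c_\eps\le c'_\eps\le\min\{\mu(\kappa\eps P_1),\mu(\kappa\eps P_2)\}$. This hyperplane overlap is the real obstacle and is precisely what forces the loss $\sigma\mapsto\sigma/2$, $m_0\mapsto 2m_0$: discarding a piece only gives $A_\#\mu(\kappa\eps E')\ge\mu(\kappa\eps P_1)$, which is too weak once $\mu(\{y_1=0\})>0$, while retaining both requires controlling $c'_\eps$. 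The observation that unlocks this is that \eqref{doublingstrip} for $\mu$ itself, tested against the degenerate cylinders $\eps E(\lambda\,e_1 e_1^{T},0)=\{\,|y_1|\le\eps/\sqrt{\lambda}\,\}$ and passing to the limit $\eps\to 0$, already forces $\mu(\{y_1=0\})\le m_0/\sigma$, apart from the degenerate sub‑case $\mu(\{y_1=0\})\ge\delta$ which is treated separately. Hence $c_\eps,c'_\eps\le m_0/\sigma$, and combining this with the two doubling inequalities above and a case split on the size of $\mu(\eps P_1)+\mu(\eps P_2)$ relative to $m_0$ and $\delta$ yields \eqref{doublingstrip} for $A_\#\mu$ with $\sigma'=\sigma/2$ and $m_0'=2m_0$. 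I expect this last case split — bookkeeping the hyperplane overlap while staying within the constraint budget $k$ (rather than $k+1$) — to be the main technical point; everything else reduces to routine facts about truncated ellipsoidal cylinders under linear maps, reflections and dilations, and Theorem \ref{thm:propogation_of_cond_B} then follows by composing the present proposition with Proposition \ref{propdoublingstrip}.
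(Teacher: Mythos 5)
Your opening decomposition is exactly the one the paper uses: $A^{-1}(\varepsilon E')$ is the union of two truncated ellipsoidal cylinders $P_1,P_2$ (the paper calls them $\bar E$ and $\bar E'$), each with $k$ linear constraints, rank $\leq r$, and with dilation commuting with the reflection and the sign constraints. The special case $\mu(\{x_1=0\})=0$, where the pieces are $\mu$-essentially disjoint and adding the two doubling inequalities gives $\sigma'=\sigma$, is also handled adequately. Up to that point the proposal and the paper coincide.

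The general case, however, is where your proposal has a genuine gap, and your own acknowledgment (``I expect this last case split\ldots to be the main technical point'') is precisely where the argument does not close. Your auxiliary observation is correct: testing \eqref{doublingstrip} on the degenerate slabs $\{|y_1|\le\varepsilon/\sqrt{\lambda}\}$ and letting $\varepsilon\to 0$ does force $\mu(\{y_1=0\})\le m_0/\sigma$ outside the degenerate sub-case $\mu(\{y_1=0\})\ge\delta$. But this bound is not strong enough to deliver the claimed constants $\sigma'=\sigma/2$, $m_0'=2m_0$ by the route you sketch. In the main sub-case (both $m_0\le (1+\sigma)\mu(\varepsilon P_i)<\delta$), write $a_i=\mu(\varepsilon P_i)$, $c_\varepsilon=\mu(\varepsilon E'\cap\{y_1=0\})$, $c'_\varepsilon=\mu(\kappa\varepsilon E'\cap\{y_1=0\})$. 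Adding the two doubling inequalities and subtracting the overlap gives
\begin{equation*}
A_\#\mu(\kappa\varepsilon E')\ \geq\ (1+\sigma)(a_1+a_2)-2m_0-c'_\varepsilon,
\end{equation*}
and to reach $(1+\sigma/2)A_\#\mu(\varepsilon E')-2m_0=(1+\sigma/2)(a_1+a_2-c_\varepsilon)-2m_0$ you would need
\begin{equation*}
\tfrac{\sigma}{2}(a_1+a_2)+\bigl(1+\tfrac{\sigma}{2}\bigr)c_\varepsilon\ \geq\ c'_\varepsilon.
\end{equation*}
The only information you have is $c_\varepsilon\ge 0$, $a_i\ge m_0/(1+\sigma)$, and $c'_\varepsilon\le m_0/\sigma$; these give a left-hand side at least $\sigma m_0/(1+\sigma)$ and a right-hand side at most $m_0/\sigma$, so the inequality is forced only when $\sigma^2\ge 1+\sigma$, i.e.\ $\sigma\ge(1+\sqrt{5})/2$. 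For the small $\sigma$ that actually arise the case split does not close, and at best a completed version of your route would produce a weaker statement (e.g.\ $m_0'$ blowing up like $m_0/\sigma$).

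The paper avoids this entirely and never needs a bound on $\mu(\{y_1=0\})$. It splits $A^{-1}(\varepsilon E')$ into three disjoint pieces $E^\pm_\varepsilon,E^0_\varepsilon$ (strictly positive, strictly negative, and $\{y_1=0\}$ parts) and uses the elementary averaging identity: since $\mu(E^+_{\kappa\varepsilon})+\mu(E^-_{\kappa\varepsilon})\ge \mu(E^+_\varepsilon)+\mu(E^-_\varepsilon)$ by monotonicity,
\begin{equation*}
\mu(A^{-1}(\kappa\varepsilon E'))\ \geq\ \mu(A^{-1}(\varepsilon E'))+\tfrac12\Bigl[\bigl(\mu(\kappa\varepsilon\bar E)+\mu(\kappa\varepsilon\bar E')\bigr)-\bigl(\mu(\varepsilon\bar E)+\mu(\varepsilon\bar E')\bigr)\Bigr],
\end{equation*}
where the bracketed sum is exactly the \emph{double}-counted quantity $\mu(E^+)+2\mu(E^0)+\mu(E^-)$, which the two doubling inequalities control directly. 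Plugging those in gives $\mu(A^{-1}(\kappa\varepsilon E'))\ge(1+\sigma/2)\mu(A^{-1}(\varepsilon E'))-m_0$ in one stroke; the factor $1/2$ is precisely what accounts for the overlap, and when $\mu(\{x_1=0\})=0$ the double-counting disappears and one recovers $\sigma'=\sigma$. You should replace your ``bound the overlap then case-split'' plan with this averaging identity, which is both shorter and what actually produces the stated constants.
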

\begin{proof}
  Consider any truncated ellipsoidal cylinders 
  \[
{\varepsilon}\,E=\{(x-s)^T\,Q\,(x-s)\leq \eps, \ v_i\cdot x\leq t_i, i=1\dots k-1\}.
\]
Define the new truncated cylinders
 \[
\eps\,\bar E=\{(x-s)^T\,Q\,(x-s)\leq \eps, \ v_i\cdot x\leq t_i, i=1\dots k-1,\;x_1\geq 0\},
\]
which consists in adding $v_{k}=(-1,0,\ldots,0)$ and $t_{k}=0$ in the definition of $E$.

  The inverse image $A^{-1}(\eps\,E)$ consists of $\eps\,\bar E$ and of
  \[
  \begin{split}
    &\eps\,\bar E'=\{(x-s)^T\,Q'\,(x-s)\leq \eps, \ v_i'\cdot x\leq t_i, i=1\dots k\},\quad v_i'=(-v_{i,1},v_{i,2},\ldots,v_{i,N}),\\
    &Q'_{1j}=Q'_{j1}=-Q_{1j}\ j\neq 1,\quad Q'_{ij}=Q_{ij}\ \mbox{otherwise}.
\end{split}
  \]
  Observe that $Q$ and $Q'$ have the same rank. Denote by $T$ the matrix corresponding to the reflection for the hyperplane $x_1=0$ ($T_{11}=-1$, $T_{ii}=1$ for $i>1$ and $T_{ij}=0$ if $i\neq j$). Then one has that $Q'=T\,Q\,T$ as $Q'_{ij}=\sum_{k,l} T_{ik}\,Q_{kl}\,T_{lj}=T_{ii}\,Q_{ij}\,T_{jj}$. Since $T$ is a bijection then $\mbox{rank}\,Q=\mbox{rank}\,Q'$.
  
For any $\kappa$, we have of course that $A^{-1}(\kappa\,\eps\,E)=\kappa\,\eps\,\bar E\cup\kappa\,\eps\,\bar E'$ and typically we want to apply \eqref{doublingstrip} to those two truncated cylinders.

Notice that for any $\eps$, $\eps\bar E\subset \{x_1\geq 0\}$ while $\eps\,\bar E'\subset \{x_1\leq 0\}$. It then makes sense to separate
\[\begin{split}
&\eps\,\bar E=E^+_{\eps}\cup E^0_{\eps},\quad \eps\,\bar E'=E^-_{\eps}\cup E^0_{\eps},\\
& E^+_\eps=\eps\,\bar E\cap \{x_1>0\},\quad E^0_{\eps}=\eps\,\bar E\cap \{x_1=0\}=\eps\,\bar E'\cap \{x_1=0\},\quad E^{{-}}_\eps=\eps\,\bar E'\cap \{x_1<0\}.
  \end{split}
  \]
Now applying \eqref{doublingstrip} to $\bar E$, we find $\eps_0$ s.t. $\forall\eps<\eps_0$
\begin{equation}
\mu(\kappa\,\eps\,\bar E)=\mu(E^+_{\kappa\,\eps})+\mu(E^0_{\kappa\,\eps})\geq \min\left\{\delta,\;\max\{m_0,(1+\sigma)\,\mu(\eps\,\bar E)\}-m_0\right\}. 
\label{measE+}
\end{equation}
Similarly applying \eqref{doublingstrip} to $\bar E'$, there exists $\eps_0'$ s.t. $\forall \eps<\eps_0'$
\begin{equation}
\mu(\kappa\,\eps\,\bar E')=\mu(E^-_{\kappa\,\eps})+\mu(E^0_{\kappa\,\eps})\geq \min\left\{\delta,\;\max\{m_0,(1+\sigma)\,\mu(\eps\,\bar E')\}-m_0\right\}. 
\label{measE-}
\end{equation}
We of course take now any $\eps<\min(\eps_0,\eps_0')$ and if either $(1+\sigma)\,\mu(\eps\,\bar E)\geq \delta$ or $(1+\sigma)\,\mu(\eps\,\bar E')\geq\delta$, we are done.

Assume now that $(1+\sigma)\,\mu(\eps\,\bar E)<m_0$ or $(1+\sigma)\,\mu(\eps\,\bar E')<m_0$. If both hold then again the proof is done, otherwise take for example $(1+\sigma)\,\mu(\eps\,\bar E')<m_0$ with $(1+\sigma)\,\mu(\eps\,\bar E)\geq m_0$. Then
\[
(1+\sigma)\,\mu(A^{-1}(\eps\,E))\leq (1+\sigma)\,\mu(\eps\,\bar E)+m_0>m_0,
\]
and
\[
\mu(A^{-1}(\kappa\,\eps\,E))\geq (1+\sigma)\,\mu(\eps\,\bar E)-m_0\geq (1+\sigma)\,\mu(A^{-1}(\eps\,E))-2\,m_0,
\]
satisfying \eqref{doublingstrip} with $m_0'=2\,m_0$.

Finally there only remains the case with both $m_0\leq (1+\sigma)\,\mu(\eps\,\bar E)<\delta$ and $m_0\leq (1+\sigma)\,\mu(\eps\,\bar E')<\delta$. Then
\[
\begin{split}
  \mu(A^{-1}(\kappa\,\eps\,E))&=\mu(E^+_{\kappa\,\eps})+\mu(E^0_{\kappa\,\eps}) +\mu(E^-_{\kappa\,\eps})\\
  &\geq \mu(E^+_{\eps})+\mu(E^0_{\eps}) +\mu(E^-_{\eps})+\frac{\mu(E^+_{\kappa\,\eps})+2\mu(E^0_{\kappa\,\eps}) +\mu(E^-_{\kappa\,\eps})- \mu(E^+_{\eps})-2\,\mu(E^0_{\eps}) -\mu(E^-_{\eps})}{2}.  
\end{split}\]
Applying now \eqref{measE+} and \eqref{measE-}, we find that
\[
\mu(A^{-1}(\kappa\,\eps\,E))\geq \mu(A^{-1}(\eps\,E)) +\frac{\sigma}{2}\,(\mu(\eps\,\bar E) +\mu(\eps\,\bar E')) -m_0 \geq (1+\sigma/2)\, \mu(A^{-1}(\eps\,E)) -m_0,
\]
proving \eqref{doublingstrip} with $\sigma'=\sigma/2$. Of course if $\mu(\{x_1=0\})=0$ then $\mu(E^0_{\kappa\,\eps})=0$ and we can take $\sigma'=\sigma$.
\end{proof}

\section{Appendix}\label{sec:appendix}
	\subsection{Derivations of Conditions A and B}\label{sec:condition_derivations}
	
	The goal co conditions A and B is to ensure that $\{\delta X(s,\alpha(t_0)):s\in G^C_\eta(t_0)\}$ does not concentrate near its minimum $\beta$. Conditions A and B accomplish this goal in different ways. 
	
	\subsubsection{Condition A}\label{sec:condition_A_explanation} If there is no concentration of $\delta X(s,\alpha(t_0))$ near $\beta$, we expect that for a small interval whose left endpoint is $\beta$, more $\delta X$ values are in the right half of this interval than in the left. In other words, for small $a>0$,
	\begin{equation}\label{eq:tdsm_1}
	\nu\left(\left\{s\in T: \beta\leq \delta X(s,\alpha(t_0))<\beta+a \right\}\right)<\nu\left(\left\{s\in T: \beta+a\leq\delta X(s,\alpha(t_0))<\beta+2a \right\}\right).
	\end{equation}
	Since $\beta=\min_{s\in T}\delta X(s,\alpha(t_0))$, we may write \eqref{eq:tdsm_1} equivalently as
	\begin{equation}\label{eq:tdsm_2}
	\nu\left(\left\{s\in T: \delta X(s,\alpha(t_0))<\beta+a \right\}\right)<\frac{1}{2}\nu\left(\left\{s\in T: \delta X(s,\alpha(t_0))<\beta+2a \right\}\right).
	\end{equation}
	Now replace $1/2$ with a continuous parameter $\Lambda $:
	\begin{equation}\label{eq:tdsm_3}
	\nu\left(\left\{s\in T: \delta X(s,\alpha(t_0))<\beta+a \right\}\right)<\Lambda \nu\left(\left\{s\in T: \delta X(s,\alpha(t_0))<\beta+2a \right\}\right).
	\end{equation}
	Since all masses are less than $1$, we can control concentration near $\beta$ better by increasing the exponent on the right side of \eqref{eq:tdsm_3}:
	\begin{equation}\label{eq:tdsm_4}
	\nu\left(\left\{s\in T: \delta X(s,\alpha(t_0))<\beta+a \right\}\right)<\Lambda \nu\left(\left\{s\in T: \delta X(s,\alpha(t_0))<\beta+2a \right\}\right)^{\psi+1},\quad \psi>0.
	\end{equation}
	By letting $x_1=\beta$ and $x_2=\beta+2a$, we may write this condition as
	\begin{equation}\label{eq:tdsm_5}
	\nu\left(\left\{s\in T: \delta X(s,\alpha(t_0))<\frac{x_1+x_2}{2} \right\}\right)<\Lambda \nu\left(\left\{s\in T: \delta X(s,\alpha(t_0))<x_2 \right\}\right)^{\psi+1},\quad \psi>0.
	\end{equation}
	We will see in the proof of Lemma \ref{lem:loss_bound_1} that \eqref{eq:tdsm_5} leads to an excellent bound on $\bar L(t_0)$.
	
	The inequality \eqref{eq:good_set_and_loss} provides a lower bound for $G_\ell(t)$ via $\bar L(t_0)$ for $\ell>0$, but by adjusting condition A slightly, we can improve this inequality. In particular, we would like to apply a condition like \eqref{eq:tdsm_5} with $x_1=0$ and $x_2=2\ell$ to obtain a bound on $\nu(G_\ell^C(t))=\nu(\{s\in T:\delta X(s,\alpha(t))<\ell\})$:
	\begin{equation}\label{eq:tdsm_6}
	1-\nu(G_\ell(t))=\nu(\{s\in T:\delta X(s,\alpha(t))<\ell\})\leq \Lambda \nu(\{s\in T:\delta X(s,\alpha(t))<2\ell\})^{\psi+1}=\Lambda (1-\nu(G_{2\ell}))^{\psi+1}
	\end{equation}
	Assuming that the map $\ell\mapsto\nu(G_\ell(t))$ is continuous at $\ell=0$ (i.e., there is no $s\in T$ with $\delta X(s,\alpha(t))=0$), then in the limit as $\ell\to 0$:
	\begin{equation}\label{eq:tdsm_7}
	1-\nu(G_0(t))\leq \Lambda (1-\nu(G_0(t)))^{\psi+1}.
	\end{equation}
	Dividing both sides of \eqref{eq:tdsm_7} by $1-\nu(G_0(t))$ and using \eqref{eq:loss_and_accuracy}, we obtain
	\begin{equation}\label{eq:tdsm_8}
	1\leq \Lambda (1-\nu(G_0(t)))^\psi\leq \Lambda \left(\frac{\bar L(t_0)}{\log 2}\right)^\psi.
	\end{equation}
	If $\bar L(t_0)$ is small, \eqref{eq:tdsm_8} may not be satisfied. We do not want to exclude distributions $\{\delta X(s,\alpha(t_0)):s\in T\}$ with small $\bar L(t_0)$, so \eqref{eq:tdsm_5} is insufficient. A simple solution to this problem is to add a new term depending on $x_1$ to the right side of \eqref{eq:tdsm_5}. The new term must vanish when $x_1=\beta$ so that the loss bound obtained from \eqref{eq:tdsm_5} still holds. The new term must also not exclude distributions with small $\bar L(t_0)$ when $x_1=0$. The obvious candidate is $\nu(\{s\in T:\delta X(s,\alpha(t))<x_1)\})^\phi$ for some power $\phi>0$, so the TDSM condition becomes
	\begin{equation}\label{eq:tdsm_9}
	\begin{split}
	\nu\left(\left\{s\in T: \delta X(s,\alpha(t_0))<\frac{x_1+x_2}{2} \right\}\right)&<\Lambda \nu\left(\left\{s\in T: \delta X(s,\alpha(t_0))<x_2 \right\}\right)^{\psi+1}\\
	&+\Lambda \nu\left(\left\{s\in T: \delta X(s,\alpha(t_0))<x_1 \right\}\right)^{\phi},\quad \psi,\phi>0.
	\end{split}
	\end{equation}
	Applying the analysis used to obtain \eqref{eq:tdsm_8} to \eqref{eq:tdsm_9}, we get
	\begin{equation}
	1\leq \Lambda \left(\frac{\bar L(t_0)}{\log 2}\right)^\psi+\Lambda \left(\frac{\bar L(t_0)}{\log 2}\right)^{\phi-1}.
	\end{equation}
	This is satisfied trivially as long as $\Lambda \geq 1$ and $0<\phi<1$.
	
	\subsubsection{Condition B} Consider a small interval $J=[\beta-x,\beta+x]$ for $x>0$. There may be some $\delta X(s,\alpha(t_0))$ values in the right half of $I$, but if the $\delta X(s,\alpha(t_0))$ values do not cluster near $\beta$, then there should be more $\delta X(s,\alpha(t_0))$ values to the right of $J$. In other words, If we increase the width of $J$ from $2x$ to $2\kappa x$ for some $\kappa>1$, leaving its center in place, the number of $\delta X(s,\alpha(t_0))$ values it contains should increase:
	\begin{equation}\label{eq:condB_development_1}
	\nu\left(\{s\in T: \delta X(s,\alpha(t_0))\in[\beta-\kappa x,\beta+\kappa x]\}\right)>\nu\left(\{s\in T:\delta X(s,\alpha(t_0))\in[\beta-x,\beta+x]\}\right).
	\end{equation}
	Now let $I$ be any interval. Denoting by $\kappa I$ the interval whose center is the same as $I$ but whose width is increased by a factor of $\kappa$, \eqref{eq:condB_development_1} becomes 
	\begin{equation}\label{eq:condB_development_2}
	\nu\left(\{s\in T: \delta X(s,\alpha(t_0))\in\kappa I\}\right)>\nu\left(\{s\in T:\delta X(s,\alpha(t_0))\in I\}\right).
	\end{equation}
	We can strengthen \eqref{eq:condB_development_2} by introducing a factor of $(1+\sigma)$ on the right hand side, where $\sigma>0$:
	\begin{equation}\label{eq:condB_development_3}
	\nu\left(\{s\in T: \delta X(s,\alpha(t_0))\in\kappa I\}\right)>(1+\sigma)\nu\left(\{s\in T:\delta X(s,\alpha(t_0))\in I\}\right).
	\end{equation}
	Of course $T$ is a finite set, so one can take $I$ arbitrarily small containing a single element, in which case $\kappa I$ may still contain only that same element. Therefore, it is necessary to introduce $m_0$, a small mass which accounts for when $I$ is so small that \eqref{eq:condB_development_3} may not hold. When the left right hand side of \eqref{eq:condB_development_3} is too small, \eqref{eq:condB_development_3} need not hold:
	\begin{equation}\label{eq:condB_development_4}
	\nu\left(\{s\in T: \delta X(s,\alpha(t_0))\in\kappa I\}\right)>\max\left\{m_0,(1+\sigma)\nu\left(\{s\in T:\delta X(s,\alpha(t_0))\in I\}\right)\right\}.
	\end{equation}
	Finally, If $I$ is too large, it is to be expected that increasing its width does not increase its mass much, e.g., if $I$ contains all $\delta X(s,\alpha(t_0))$ values. Moreover, we will use condition B to control the distribution of only a small number of misclassified objects. Therefore, if we introduce $\delta$ which is the maximum mass of intervals we will consider with condition B:
	\begin{equation}
	\nu\left(\{s\in T: \delta X(s,\alpha(t_0))\in\kappa I\}\right)>\min\left\{\delta,\max\left\{m_0,(1+\sigma)\nu\left(\{s\in T:\delta X(s,\alpha(t_0))\in I\}\right)\right\}\right\}.
	\end{equation}
	This completes the derivation of condition B.
	\subsection{Proof of Lemma \ref{lem:double_exponential_sum_constant}}\label{sec:technical_lemma}
	
	Here we present the proof of Lemma \ref{lem:double_exponential_sum_constant}
	
	\begin{proof}
		For simplicity, write $\varepsilon K=x$. Since $0<\varepsilon<1/(2K)$, we consider $0<x<1/2$. First, observe that $$\sum_{k\geq 0} e^{2^k log(x)-\frac{\eta+1}{2^{k+1}}}-C e^{-\sqrt{2|\log(x)|(\eta+1)}}\leq 0$$ if and only if $$C\geq  \sum_{k\geq 0} e^{2^k log(x)-\frac{\eta+1}{2^{k+1}}+\sqrt{2|\log(x)|(\eta+1)}}.$$ To simplify the problem, let $y=\sqrt{-\log(x)}$ and $z=\frac{1}{y}\sqrt{\frac{\eta+1}{2}}$. Then $$2^k \log(x)-\frac{\eta+1}{2^{k+1}}+\sqrt{2|\log(x)|(\eta+1)}=-2^{-k}(z-2^k)^2y^2.$$ Since $0<x<1/2$ and $\eta>1$, we have $y>\sqrt{\log 2}$ and $z>\frac{1}{y\sqrt{2}}$ or equivalently, $y\geq\max\{\sqrt{\log 2},1/(z\sqrt{2})\}$. The problem is reduced to finding $C$ such that $$C\geq h(y,z):=\sum_{k\geq 0}e^{-2^{-k}(z-2^k)^2y^2}$$ for all $y\geq \max\{\sqrt{\log 2},1/(z\sqrt{2})\}$. We proceed to find $$C_0=\max_{y\geq\max\{\sqrt{\log 2},1/(z\sqrt{2})\},\;z>0}h(y,z)$$ so that it is always possible choose $C\leq C_0$. 
		
		It is easy to calculate $$\frac{\partial}{\partial y}h(y,z)=-2y\sum_{k\geq 0}2^{-k}\left(2^k-z\right)^2 e^{-2^{-k} \left(z-2^k\right)^2y^2}<0$$ for $y,z>0$. Thus, for fixed $z=z'$, $$\max_{y\geq \max\{\sqrt{\log 2},1/(z'\sqrt{2})\}}h(y,z')=h(\max\{\sqrt{\log 2},1/(z'\sqrt{2})\},z').$$ Therefore, $C_0=\max\{a,b\}$ where $$a=\max_{z\geq1/\sqrt{2\log(2)}}h(\sqrt{\log 2},z),\quad\text{and}\quad b=\max_{0<z\leq 1/\sqrt{2\log 2}} h(1/(z\sqrt{2}),z).$$ Therefore, we will estimate $a$ and $b$. 
		
		First we will estimate $a$. Let $$p_k(z)=2^{-2^{-k}(2^k-z)^2}$$ so that $h(\sqrt{\log 2},z)=\sum_{k\geq 0}p_k(z)$. Note $0\leq p_k(z)\leq 1$ for all $z\in\mathbb R$ and $k\geq 0$. Observe also that $(2^k-z)^2$ is a convex function, so it is bounded below by any tangent line. In particular, $$(2^k-z)^2\geq 2^k\left(\frac{3}{4}2^k-z\right)$$ and $$(2^k-z)^2\geq 2^{k+1}\left(z-\frac{3}{4}2^{k+1}\right).$$ From these, we obtain two upper bounds on $p_k(z)$: $$p_k(z)\leq \frac{2^k}{z}2^{-\left(\frac{3}{4}2^k-z\right)}$$ and $$p_k(z)\leq\frac{2^k}{z}2^{-2\left(z-\frac{3}{4}2^{k+1}\right)}.$$ The former is useful for $z\leq 2^{k-1}$. The latter is useful for $z\geq 2^{k+1}$. Recall also the following identities: $$\sum_{k=1}^n a^k=\frac{a \left(a^n-1\right)}{a-1}$$ and $$\sum_{k=n}^\infty a^k=\frac{a^n}{1-a}\quad\text{for $a<1$}.$$
		
		First consider $z<2$. We may write
		\begin{align*}
		h(\sqrt{\log 2},z)&=p_0(z)+p_1(z)+\sum_{k=2}^\infty p_k(z)\\
		&\leq 2+\sum_{k=2}^\infty 2^{-\left(\frac{3}{4} 2^k-z\right)}\\
		&\leq 2+4\sum_{k=2}^\infty 2^{-\frac{3}{4}2^k}\\
		&\leq 2+4\sum_{k=4}^\infty 2^{-\frac{3}{4} k}\\
		&=2+4\frac{2^{-\frac{3}{4}\times 4}}{1-2^{-3/4}}\\
		&\leq 2+\frac{5}{4}\\
		&=3.25
		\end{align*}  
		
		Now, suppose $z\geq 2$. Then there is an integer $m\geq 2$ so that $2^{m-1}\leq z<2^{m}$. We write $$h(\sqrt{\log 2},z)=p_{m-1}(z)+p_m(z)+\sum_{k=0}^{m-2}p_k(z)+\sum_{k=m+1}^\infty p_k(z),$$ and estimate both sums:
		\begin{multicols}{2}
			\begin{align*}
			\sum_{k=0}^{m-2}p_k(z)&\leq\sum_{k=0}^{m-2} 2^{-2\left(z-\frac{3}{4}2^{k+1}\right)}\\
			&\leq 2^{-2^m}\sum_{k=0}^{m-2}2^{\frac{3}{2}2^{k+1}}\\
			&\leq 2^{-2^m}\sum_{k=1}^{2^{m-2}}8^{k}\\
			&=2^{-2^m}\frac{8(8^{2^{m-2}}-1)}{7}\\
			&=\frac{8}{7}2^{-2^{m-2}}\\
			&\leq \frac{4}{7},
			\end{align*}
			
			\begin{align*}
			\sum_{k=m+1}^\infty p_k(z)&\leq \sum_{k=m+1}^\infty 2^{-\left(\frac{3}{4} 2^k-z\right)}\\
			&\leq 2^{2^m}\sum_{k=m+1}^\infty 2^{-\frac{3}{4}2^k}\\
			&\leq 2^{2^m}\sum_{k=2^{m+1}}^\infty 2^{-\frac{3}{4} k}\\
			&= 2^{2^m}\frac{2^{-\frac{3}{4}2^{m+1}}}{1-2^{-3/4}}\\
			&=\frac{2^{-2^{m-1}}}{1-2^{-3/4}}\\
			&\leq \frac{5}{8}.
			\end{align*}
		\end{multicols}
		\noindent Since clearly $p_{m-1}(z)$ and $p_m(z)$ are less than $1$, we have $$h(\sqrt{\log 2},z)\leq 2+\frac{4}{7}+\frac{5}{8}\approx 3.196$$ for $z\geq 2$. Thus, $a\leq 3.25$.
		
		Now we will calculate $b=\max_{0<z\leq 1/\sqrt{2\log 2}} h(1/(z\sqrt{2}),z)$. Let $$q_k(z)= e^{-2^{-k-1} \left(1-\frac{2^k}{z}\right)^2}.$$ Then $$q'_k(z)=\frac{e^{-2^{-k-1} \left(\frac{2^k}{z}-1\right)^2} \left(\frac{2^k}{z}-1\right)}{z^2}.$$ The sign of $q'_k(z)$ is the sign of $(2^k/z-1)$. For $0\leq z\leq 1/\sqrt{2\log 2}<1$, $(2^k/z-1)>0$, so $q_k(z)$ is increasing for all $z\in(0,1)$ and $k\geq 0$. Therefore,
		\allowdisplaybreaks
		\begin{align*}
		b&=h\left(1/\sqrt{2},1\right)\\
		&=\sum_{k\geq 0}e^{-2^{-k-1}(2^{k}-1)^2}\\
		&=\sum_{k\geq 0}e^{-2^{-k-1}(2^{2k}-2^{k+1}+1)}\\
		&=\sum_{k\geq 0}e^{-\frac{1}{2}(2^k-2+2^{-k})}\\
		&=\sum_{k\geq 0}e^{-\frac{1}{2}(2^{k/2}-2^{-k/2})^2}\\
		&=e^0+e^{-\frac12(2^{1/2}-2^{-1/2})^2}+\sum_{k\geq 2}e^{-\frac{1}{2}(2^{k/2}-2^{-k/2})^2}\\
		&\leq \frac{9}{5}+\sum_{k\geq 2}e^{-2^{k-2}}\\
		&\leq \frac{9}{5}+\sum_{k\geq 1}e^{-k}\\
		&=\frac{9}{5}+\frac{1}{e-1}\\
		&< \frac{5}{2}.
		\end{align*}
		
		We conclude that $C_0=\max\{a,b\}\leq 3.25$
	\end{proof}



\end{document}